\theoremstyle{plain}
\newtheorem{thm}{Theorem}[section]
\newtheorem{corollary}[thm]{Corollary}
\newtheorem{lemma}[thm]{Lemma}
\newtheorem{propo}[thm]{Proposition}
\newtheorem{rem}[thm]{Remark}
\numberwithin{equation}{section}
\newcommand{\dist}{\text{dist}} 
\newcommand{\C}{\mathbb C}
\newcommand{\R}{\mathbb R}
\newcommand{\be}{\begin{equation}}
\newcommand{\ee}{\end{equation}}
\newcommand{\ep}{\eps}
\renewcommand{\H}{\mathbb H}
\newcommand{\eps}{\varepsilon}
\newcommand{\ov}{\overline}
\newcommand{\Om}{\Omega}
\newcommand{\p}{\partial}
\newcommand{\comment}[1]{}
\newenvironment{myindentpar}[1]%
{\begin{list}{}%
         {\setlength{\leftmargin}{#1}}%
         \item[]%
}
{\end{list}}
\title[The Monge-Amp\`ere equation on polygonal domains]{
Global $C^{2,\alpha}$ estimates for the Monge-Amp\`ere equation on polygonal domains  in the plane}
\author{Nam Q. Le}
\address{Department of Mathematics, Indiana University, Bloomington, IN 47405, USA}
\email{nqle@indiana.edu}
\author{Ovidiu Savin}
\address{Department of Mathematics, Columbia University, New York, NY 10027, USA}
\email{savin@math.columbia.edu}
\thanks{ N.Q.L. was supported by NSF grant DMS-1764248. O.S. was supported by NSF grant DMS-1500438.}
\begin{document}

\begin{abstract}

We classify global solutions of the Monge-Amp\`ere equation $\det D^2 u=1 $ on the first quadrant in the plane with quadratic boundary data. As an application, we obtain global $C^{2,\alpha}$ estimates for the  non-degenerate Monge-Amp\`ere equation in convex polygonal domains in $\mathbb R^2$ provided a globally $C^2$, convex strict subsolution exists.

\end{abstract}
\maketitle

\section{Introduction and statement of the main results}
In this paper, we establish global
$C^{2,\alpha}$ estimates for the non-degenerate Monge-Amp\`ere equation in convex polygonal domains in $\mathbb R^2$ provided a globally $C^2$, convex strict subsolution exists.

For smooth domains $\Omega$ in $\R^n$, boundary $C^2$ estimates for the convex solution to the Dirichlet problem for the Monge-Amp\`ere equation
$$ \det D^2 u = f\quad \text{ in} ~  \Omega, \quad
u = \varphi\quad \text{ on}~ \p\Omega$$
in the nondegenerate case where $f\in C(\ov \Om)$ and $f>0$ in $\ov\Om$, have received considerable attention in the last four decades.
   On smooth and strictly convex domains $\Omega$, these boundary estimates
 were obtained starting with 
the works of Ivo\u{c}kina \cite{I}, Krylov \cite{K}, Caffarelli-Nirenberg-Spruck \cite{CNS} (see also Wang \cite{W}). 
Also on convex domains, global $C^{2,\alpha}$ estimates under sharp conditions
on the right hand side and boundary data were obtained by Trudinger-Wang \cite{TW1} and the second author \cite{S1}. 
On bounded smooth domains $\Omega$ that are not necessarily convex,   global $C^{2,\alpha}$ estimates with globally smooth right hand side and boundary data were 
first obtained by Guan-Spruck \cite{GS} under the assumption that there exists a convex strict subsolution $\underbar{u}\in C^2(\ov\Om)$ taking the boundary values $\varphi$. 
The strictness of the subsolution $\underbar{u}$ in \cite{GS} was later removed by Guan \cite{G}.

In this paper, we relax the smoothness of the domains $\Omega$ in the two dimensional case and investigate $C^{2,\alpha}$ estimates in general convex domains with corners. 

Our first main result states:

\begin{thm}\label{T1}
Let $\Omega$ be a bounded convex polygonal domain in $ \mathbb R^2$.
Let $u$ be a convex function that solves the Dirichlet problem for the Monge-Amp\`ere equation 
\begin{equation}
\label{DP}
\left\{
 \begin{alignedat}{2}
   \det D^2 u ~& = f~&&\text{in} ~  \Omega, \\\
u &= \varphi~&&\text{on}~ \p\Omega.
 \end{alignedat} 
  \right.
  \end{equation}
  Assume that for some $\beta\in (0, 1)$, $$f \in C^\beta(\overline{\Om}), \quad f>0, \quad{ and} \quad \varphi \in C^{2,\beta}(\p \Om),$$ and there is a globally $C^2$, convex, strict subsolution $\underbar{u} \in C^2(\ov \Om)$ to \eqref{DP} (that is, $\det D^2\underbar u>f$ in $\ov\Om$ and $\underbar{u}=\varphi$ on $\p \Om$). Then
$$u \in C^{2,\alpha}(\ov \Om),$$
for some $\alpha>0$.  The constant $\alpha$ and the global $C^{2,\alpha}$ norm $\|u\|_{C^{2,\alpha}(\ov \Om)}$ depend on $\Omega,\beta$, $\min_{\ov\Om} f$, $\|f\|_{C^{\beta}(\ov \Om)}$, $\|\varphi\|_{C^{2,\beta}(\p \Om)}$, 
$\|\underbar{u}\|_{C^{2}(\ov \Om)}$ and the differences $\det D^2 \underbar{u} - f$ at the vertices of $\Omega$.
\end{thm}
\begin{rem}
\label{R1}
If we relax the assumption on $\underbar{u}$ in Theorem \ref{T1} to be a subsolution (not necessarily strict), then we obtain $u \in C^2(\overline \Om)$. This follows from Theorem \ref{T3}.
\end{rem}

Theorem \ref{T1} establishes continuity estimates of the second derivatives for the solutions to the Monge-Amp\`ere equation (\ref{DP}) near the vertices of a domain with corners. Depending on the data, solutions might develop conical singularities at the corners where the Hessian matrix becomes unbounded. A necessary condition for the $C^2$ estimates is the existence of a classical convex subsolution with the same boundary data. By the results above, this condition turns out to be sufficient as well. This is in contrast with the case of second order linear elliptic equations where the regularity of solutions depends on the smallness of the angles at the vertices.

We also note that Theorem \ref{T1} cannot hold in $n\ge 3$ dimensions. For example, we can take $\Omega$ to be the unit cube $[0,1]^3 \subset \R^3$, $f\equiv c <1$, and $\varphi= |x|^2/2$ on $\p \Omega$. Then $u$ cannot be $C^2$ at the origin since otherwise the boundary data imposes $D^2u(0)=I$ hence $\det D^2 u(0)=1 \ne f(0)$.

An interesting feature of the $C^{2,\alpha}$ estimates for \eqref{DP} is that they are not stable under small perturbations 
of the data $\varphi$ and $f$. 
The $C^{2,\alpha}$ norm of the solution $u$ depends crucially on the $C^2$ norm of the subsolution $\underline u$ and on the differences $\det D^2 \underline u - f$ at the vertices of $\Omega$. 
In fact we show that it is possible for $D^2u$ to oscillate of order 1 in an arbitrarily small neighborhood of a vertex when $\det D^2 \underline u$ and $f$ are allowed to be sufficiently close at that vertex. A more accurate analysis about the possible behaviors of solutions near a corner under general data is given at the end in Theorem \ref{T3}.

We prove Theorem \ref{T1} by first classifying global solutions to the Monge-Amp\`ere equation in the first quadrant in the plane with constant right hand side and quadratic boundary data. Our classification can be viewed as a Liouville type result for the Monge-Amp\`ere equation in angles in the plane.
Liouville type theorems for the Monge-Amp\`ere equation which state that global solutions must be quadratic polynomials are known in all dimensions if the domain is either the whole space or a half-space; see \cite{CL, S2}.

At a vertex of the polygon the solution $u$ to (\ref{DP}) is pointwise $C^{1,1}$ since it is bounded above by the convex function generated by the boundary data $\varphi$ and bounded below by the tangent plane of $\underline u$, which is also the tangent plane for the upper barrier. 
Using the affine invariance of the Monge-Amp\`ere equation (see \cite{F, Gu}), we may assume after an affine transformation that $\Omega$ is given by the first quadrant
$$Q:=\{x=(x_1, x_2)\in \R^2: x_1, x_2>0\},$$
in a neighborhood of the origin, and $\varphi_{x_1 x_1}(0)=\varphi_{x_2 x_2}(0)=1$.
Then a quadratic blow-up of the solution must converge to a global convex solution defined in the first quadrant $Q$
that satisfies
\begin{equation}
\label{ueq}
\det D^2 u=c, \quad \mbox{and} \quad u \ge 0, \quad \mbox{in $Q$},
\end{equation}
for some constant $c>0$, and
\begin{equation}
\label{ubdr}
u(x)=\frac{|x|^2}{2}~\text{on }\p Q.
\end{equation}

We denote by $P_c^\pm$ the quadratic polynomials that solve (\ref{ueq})-(\ref{ubdr}) when $0<c<1$ which are important in our analysis
$$P_c^{\pm}(x):= \frac{1}{2}x_1^2 + \frac{1}{2}x_2^2 \pm \sqrt{1-c}~ x_1x_2.$$

Our second main result classifies global convex solutions $u\geq 0$ of the Monge-Amp\`ere equation  in the first quadrant in the plane with quadratic boundary data and constant right hand side.

\begin{thm}\label{T2} Assume that $u$ is a solution to (\ref{ueq})-(\ref{ubdr}). Then $c \le 1$ and
\begin{myindentpar}{1cm}
(i) if $c=1$ then the only solution $u$ to (\ref{ueq})-(\ref{ubdr})  is $$u(x)=\frac{|x|^2}{2}.$$
(ii) if $c<1$ then either $$u=P_c^\pm, \quad \mbox{ or} \quad  u(x)= \lambda^{2}\, \, \bar P_c \left ( \frac x \lambda\right),$$ for some $\lambda \in (0, \infty)$ where $\bar P_c$ is a particular solution to (\ref{ueq})-(\ref{ubdr}) that satisfies $$P_c^- < \bar P_c < P_c^+ \quad \text{in } Q, \quad \text{and  } \bar P_c(1,1)=1.$$
Moreover, $\bar P_c\in C^{2,\alpha}(\overline{Q})$ for some $\alpha=\alpha(c)>0$, and 
$$\mbox{$\bar P_c(x)=P_c^+(x) + O(|x|^{2+\alpha})$ near $x=0$  and $\bar P_c(x)=P_c^-(x) + O(|x|^{2-\alpha})$ for all large $|x|$, }$$ hence $\bar P_c$ interpolates between the quadratic polynomial $P_c^+$ near $0$ and $P_c^-$ at $\infty$.
\end{myindentpar}
\end{thm}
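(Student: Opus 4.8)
The plan is to analyze any solution $u$ of \eqref{ueq}--\eqref{ubdr} through its behaviour at the two ends of $Q$ — the corner $0$ and infinity — classifying the possible ``profiles'' there and then rigidly interpolating between them. As basic input, convexity of $u$ along the segment from $(0,x_1+x_2)$ to $(x_1+x_2,0)$ through $x$, together with \eqref{ubdr}, gives the upper barrier $0\le u(x)\le \tfrac12(x_1+x_2)^2$ on $Q$; hence $u$ is pointwise $C^{1,1}$ at $0$ with tangent plane $0$, $|\nabla u|\le C|x|$ near $0$, and $u$ grows at most quadratically at infinity. By the interior regularity theory for the Monge--Amp\`ere equation and the boundary estimates of \cite{S1}, $u$ is smooth and strictly convex in $Q$ and $C^{2,\beta}$ up to each open edge, so all singular behaviour sits at the corner, and the rescalings $u_\lambda(x):=\lambda^{-2}u(\lambda x)$ converge (along subsequences) to solutions of \eqref{ueq}--\eqref{ubdr} as $\lambda\to0$ (blow-down) and $\lambda\to\infty$ (blow-up). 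I would then classify $2$-homogeneous solutions: writing $u=r^2g(\theta)$, interior smoothness turns \eqref{ueq} into the autonomous ODE $4g^2+2gg''-(g')^2=c$, with first integral $(g')^2=Cg-4g^2-c$; imposing $g(0)=g(\pi/2)=\tfrac12$ and $g>0$ leaves only $g=\tfrac12(1\pm\sqrt{1-c}\,\sin 2\theta)$, i.e. $u=P_c^{\pm}$, which already forces $c\le1$, with $g\equiv\tfrac12$ the unique possibility when $c=1$. This gives part (i) once the blow-down and blow-up limits are known to be genuinely homogeneous, which comes from the next step.

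The key rigidity is monotonicity of $\lambda\mapsto u_\lambda$. The generator of the scaling symmetry $\psi:=x\cdot\nabla u-2u$ satisfies $\partial_{ij}\psi=x_k u_{ijk}$, hence $U^{ij}\partial_{ij}\psi=x_k\,\partial_k(\det D^2u)=0$: it solves the linearized Monge--Amp\`ere equation, vanishes on $\p Q$, is $O(|x|^2)$, and $\tfrac{d}{d\lambda}u_\lambda(x)=\lambda^{-3}\psi(\lambda x)$. I would prove that $\psi$ does not change sign in $Q$ — a Phragm\'en--Lindel\"of / barrier argument for the linearized operator on the quarter-plane, accounting for the possible degeneration of $U^{ij}$ at the corner — so that $\lambda\mapsto u_\lambda(x)$ is monotone, the limits $u_0:=\lim_{\lambda\to0}u_\lambda$ and $u_\infty:=\lim_{\lambda\to\infty}u_\lambda$ exist, solve \eqref{ueq}--\eqref{ubdr}, and are $2$-homogeneous (being fixed under rescaling), hence lie in $\{P_c^+,P_c^-\}$. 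If $u_0=u_\infty$ then $u_\lambda$ is constant in $\lambda$, so $u$ is $2$-homogeneous and $u=P_c^{\pm}$. Otherwise $\{u_0,u_\infty\}=\{P_c^+,P_c^-\}$, and one shows the orientation must be $u_0=P_c^+$, $u_\infty=P_c^-$ — equivalently $\psi<0$, whence $P_c^-<u<P_c^+$ in $Q$ by the strong maximum principle — the reverse connection being excluded by the corner analysis, since near $0$ the perturbation $u-P_c^-$ is $o(|x|^2)$ yet only $O(|x|^2)$, which forces it below the admissible leading homogeneity off $P_c^-$ and then cannot be matched to $P_c^+$ at infinity.

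It remains to show the interpolating solutions form exactly the scaling orbit of one $\bar P_c$, with the stated regularity and expansions. Linearizing \eqref{ueq} at $P_c^+$ gives the constant-coefficient elliptic operator $L_+=\partial_{11}+\partial_{22}-2\sqrt{1-c}\,\partial_{12}$; solutions of $L_+w=0$ vanishing on $\p Q$ have a discrete set of homogeneities, the least one exceeding $2$ being $2+\alpha$ for an explicit $\alpha=\alpha(c)>0$ governed by the opening angle obtained after diagonalizing $L_+$. Hence $u=P_c^++a\,|x|^{2+\alpha}\phi(\theta)+o(|x|^{2+\alpha})$ near $0$ for the principal mode $\phi$ and some $a<0$; since rescaling multiplies $a$ by a positive factor, normalizing $u(1,1)=1$ pins the solution down as $\bar P_c$, and the symmetric linearization at $P_c^-$ near infinity yields $\bar P_c=P_c^-+O(|x|^{2-\alpha})$. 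A perturbation/fixed-point argument about $P_c^{\pm}$ turns these formal expansions into genuine ones and gives $\bar P_c\in C^{2,\alpha}(\overline{Q})$, while existence of at least one interpolating solution follows by solving \eqref{ueq}--\eqref{ubdr} on an exhausting family of bounded subdomains with suitable boundary data and passing to the limit (or by continuation from $P_c^+$). I expect the two main obstacles to be: the no-sign-change property of $\psi$, i.e.\ monotonicity of the rescalings, in the presence of a possible conical singularity where the linearized operator degenerates; and the sharp corner analysis — selecting the blow-down among $P_c^{\pm}$ and proving $C^{2,\alpha}$ regularity at the corner with the precise $P_c^++O(|x|^{2+\alpha})$ and $P_c^-+O(|x|^{2-\alpha})$ behaviour — which amounts to a sharp Schauder-type estimate at a corner, to be carried out by an iteration/compactness scheme in the spirit of \cite{S1}.
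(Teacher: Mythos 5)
Your strategy is genuinely different from the paper's. The paper never invokes the dilation generator $\psi=x\cdot\nabla u-2u$; instead it (a) proves the sharp Hessian bound $u_{11},u_{22}\le 1$, $|u_{12}|\le\sqrt{1-c}$ via the maximum principle for second derivatives (Lemma~\ref{uxx_lem}), which already gives $c\le1$ and part (i) without any homogeneity classification; (b) shows $u_{12}$ has limits $\pm\sqrt{1-c}$ at $0$ and $\infty$ (Lemmas~\ref{uxy_lem}--\ref{uxy_bound}), exploiting that $u_{12}$ solves a zeroth-order-free linear elliptic equation in $2$D; and (c) derives a boundary Harnack expansion $u\circ A_c^-=q+(a+o(1))v_0$ at infinity (Lemma~\ref{l41}) from which the one-parameter classification follows by a comparison argument (Corollary~\ref{MP_lem}, Proposition~\ref{Cl}). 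Your ODE reduction $4g^2+2gg''-(g')^2=c$ for the $2$-homogeneous solutions is a clean alternative to the Hessian-bound route for classifying the candidate blow-ups, but it only gives (i) and ``$c\le1$'' once you already know blow-downs are homogeneous — and that is exactly what needs proving.

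The central gap is the assertion that $\psi$ has no sign change. You flag it as an obstacle but offer only the phrase ``Phragm\'en--Lindel\"of / barrier argument,'' and there are concrete reasons to doubt such an argument closes without significant new ideas. The linearized operator at a solution close to $P_c^-$ becomes, after the affine change $A_c^-$, a perturbation of the Laplacian on an obtuse wedge $Q_c^-$, whose lowest nontrivial eigenmode $v_0$ has homogeneity $\beta_c^-\in(1,2)$. A solution of the linearized equation vanishing on $\partial Q$ and of size $O(|x|^2)$ is therefore \emph{super}critical relative to this eigenvalue: it may contain a nonzero $v_0$-component plus higher modes of homogeneity $>2$, and such a superposition changes sign. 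So ``bounded growth plus vanishing boundary data'' alone does not force $\psi$ to be a positive multiple of a single mode, and one cannot get a Phragm\'en--Lindel\"of rigidity statement at this level of generality. There is also a circularity you should not gloss over: to know which eigenvalue structure governs $\psi$ near $0$ and $\infty$, you need to know the blow-up/blow-down limits; but you wanted sign-constancy of $\psi$ precisely to get monotonicity of $u_\lambda$ and hence well-defined homogeneous limits. The paper circumvents this by working with $u_{12}$ (bounded, no zero-order term, Hopf lemma applies on the wedge boundary) rather than $\psi$, which is a quantity of the ``right'' criticality for the maximum-principle argument. Your step ruling out $u_0=P_c^-$ at the corner (``the perturbation $u-P_c^-$ is $o(|x|^2)$ yet only $O(|x|^2)$ \dots cannot be matched to $P_c^+$ at infinity'') is also not a proof: the quantities $o(|x|^2)$ and $|x|^{\beta_c^-}$ with $\beta_c^-<2$ are not in contradiction near $0$, and you do not derive any inconsistency from the assumed expansions. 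The paper's version (Lemma~\ref{Pcpm2}) instead obtains the orientation by running the pointwise $C^{2,\alpha}$ improvement of Proposition~\ref{C2P+} indefinitely once $u_{12}$ is close to $\sqrt{1-c}$ somewhere, using the $u\ge0$ hypothesis and Corollary~\ref{cor_pm}. Finally, uniqueness of the interpolating solution up to scaling requires a global comparison principle; the paper's Corollary~\ref{MP_lem} supplies it via the asymptotic constant $a$, whereas your normalization $u(1,1)=1$ by itself does not. Until the sign-constancy of $\psi$ and the orientation argument are proved, the proposed proof does not establish the classification.
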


Theorem \ref{T2} shows that any small positive perturbation of $P_c^-$ on $\p B_1\cap Q$, for example a rescaling of $\bar P_c$ for small $\lambda$, produces an arbitrarily large $C^{2,\alpha}$ norm near the origin. 

In Proposition \ref{Cl} we give more precise information when $c<1$ and classify all global solutions which do not necessarily satisfy the assumption $u \ge 0$. 
We show that there is a second family of solutions generated by quadratic rescalings of a particular solution $\underbar P_c$ of (\ref{ueq})-(\ref{ubdr})   which has a conical singularity 
at the origin. 

The rest of the paper is organized as follows. In Section \ref{C2_sect}, we state a compactness result and  derive second derivative 
estimates for global solutions. In Section 3 we establish pointwise $C^{2,\alpha}$ estimates for perturbations of the quadratic polynomials $P_c^\pm$.  The classification of global solutions is obtained in Section 4.
The final section, Section \ref{T1_pf}, will be devoted to proving the global $C^{2,\alpha}$ estimates in Theorem \ref{T1}.

 \section{Compactness and second derivative estimates for global solutions}
 \label{C2_sect}

 In this section, we obtain second derivative estimates and their consequences in the analysis of solutions to the Monge-Amp\`ere equation $\det D^2 u= c$  in the first quadrant in the plane with quadratic boundary data.
 
 \subsection{Compactness}
\label{Compactness} 
Assume that $u$ satisfies (\ref{ueq}) and (\ref{ubdr}).
 
 As mentioned in the Introduction, for $x=(x_1, x_2)\in Q$, we have from the convexity of $u$
that
$$u(x) \leq  \frac{x_1}{x_1+ x_2}u (x_1+ x_2, 0) + \frac{x_2}{x_1+ x_2}u(0, x_1+ x_2)=\frac{1}{2}(x_1+ x_2)^2\leq |x|^2.$$
Since $u\geq 0$, we can use standard barriers at points on $\p Q$ to obtain
$$|\nabla u| \le C(c) \quad \mbox{in} \quad (B_3 \setminus B_{1/3}) \cap  Q.$$ 
The function $u$ separates quadratically from its tangent plane on $\p Q$, so by the results in \cite[Theorem 6.4]{S1}, we find
$$\|u\|_{C^{3}} \le C_0(c) \quad \mbox{in} \quad (B_2\setminus B_{1/2}) \cap Q.$$
Applying the above estimate to the quadratic rescalings of $u$ (that is, those of the form $r^{-2} u(rx)$), we find
\begin{equation}
\label{uC2}
c_0(c)I \le  D^2 u \le C_0(c)I \quad \mbox{in} \quad Q,
\end{equation}
thus the Monge-Amp\`ere operator $\det D^2 u$ is uniformly elliptic, and
\begin{equation}
\label{uC3}
|D^3 u(x)| \le C_0(c) |x|^{-1}  \quad \mbox{in} \quad Q.
\end{equation}

The above estimates easily give the compactness in $C^3_{loc}(\ov Q \setminus \{0\})$ for a sequence of solutions to \eqref{ueq}-\eqref{ubdr} which we state below.

\begin{lemma}(\bf Compactness) {\it Let $u_k$ be a sequence of solutions to \eqref{ueq}-\eqref{ubdr}. Then, there exists a subsequence which converges (in the $C^3$ norm) on compact sets of $\ov Q \setminus \{0\}$ to another solution $u_\infty$ of \eqref{ueq}- \eqref{ubdr}.}
\label{comp}
\end{lemma}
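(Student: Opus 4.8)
The plan is a routine Arzel\`a--Ascoli and diagonalization argument; the only points requiring care are making the a priori estimates uniform up to (but not at) the corner, and checking that the limit still realizes the boundary data \eqref{ubdr}, including near the corner.

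First, regard $c>0$ as fixed (should the $u_k$ solve \eqref{ueq} with varying constants $c_k$, pass first to a subsequence with $c_k\to c$, which is positive in all the situations where the lemma is applied). By the discussion preceding the lemma --- the upper barrier $0\le u_k(x)\le |x|^2$, the boundary gradient bound on $(B_3\setminus B_{1/3})\cap Q$, and \cite[Theorem 6.4]{S1} applied to the quadratic rescalings $r^{-2}u_k(r\,\cdot)$, which again solve \eqref{ueq}--\eqref{ubdr} with the same $c$ --- the estimates \eqref{uC2} and \eqref{uC3} hold for every $u_k$ with the same constants $c_0(c)$ and $C_0(c)$. By \eqref{uC2} the equation can be written as $\log\det D^2 u_k=\log c$, which is uniformly elliptic and concave; differentiating it and invoking Schauder theory, I would upgrade these to a uniform bound
$$\|u_k\|_{C^{3,\alpha_0}(K)}\le C(K)\qquad \text{for every compact } K\subset \ov Q\setminus\{0\},$$
for some $\alpha_0=\alpha_0(c)>0$, with $C(K)$ depending only on $c$ and $\dist(K,\{0\})$.

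Next, fix an exhaustion $\ov Q\setminus\{0\}=\bigcup_j K_j$ by compact sets with $K_j\subset\operatorname{int}K_{j+1}$. On each $K_j$ the family $\{u_k\}$ is bounded in $C^{3,\alpha_0}$, hence precompact in $C^3(K_j)$; a diagonal extraction produces a subsequence (not relabeled) converging in $C^3$ on every compact subset of $\ov Q\setminus\{0\}$ to some $u_\infty\in C^3_{loc}(\ov Q\setminus\{0\})$. Passing to the limit in \eqref{uC2} shows that $u_\infty$ is convex, and in $\det D^2 u_k=c$ that $\det D^2 u_\infty=c$ in $Q$. The bound $0\le u_k\le|x|^2$ survives in the limit, so $0\le u_\infty\le|x|^2$ in $Q$; in particular $u_\infty(x)\to0$ as $x\to0$. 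Since the convergence holds in $C^0$ up to $\p Q\setminus\{0\}$, where $u_k\equiv|x|^2/2$, we get $u_\infty=|x|^2/2$ on $\p Q\setminus\{0\}$; together with $u_\infty(0)=0$ this shows $u_\infty$ is continuous on $\ov Q$ and equals $|x|^2/2$ on all of $\p Q$. Therefore $u_\infty$ solves \eqref{ueq}--\eqref{ubdr}, which is the assertion.

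The delicate step --- the one I would write out most carefully --- is the matching of the boundary values at the corner: because \eqref{uC3} degenerates like $|x|^{-1}$ as $x\to0$, it cannot be extracted from the equation and must instead be read off from the squeeze $0\le u_\infty\le|x|^2$, which is also where the hypothesis $u\ge 0$ in \eqref{ueq} (via the lower barrier) enters.
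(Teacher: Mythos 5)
Your proof is correct and follows the same route the paper intends: the paper derives the uniform interior/boundary estimates \eqref{uC2}--\eqref{uC3} and then simply asserts that these "easily give the compactness," which is precisely the Arzel\`a--Ascoli and diagonalization argument you spell out. Your observation that a $C^{3,\alpha_0}$ bound (rather than merely $C^3$, obtained from Savin's boundary $C^{2,\alpha}$ estimates plus Schauder bootstrap) is what actually yields precompactness in $C^3$, together with the squeeze $0\le u_\infty\le|x|^2$ to recover the boundary value at the corner, are exactly the points the paper leaves to the reader, and you handle them correctly.
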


\subsection{$C^{1,1}$ estimates }
Our first result is a sharp upper bound for the Hessian matrix $D^2u$.
\begin{lemma}
\label{uxx_lem}
Let $u$ be a convex function satisfying (\ref{ueq}) and (\ref{ubdr}). 
Then, for all $x\in Q$, we have
$$u_{x_1x_1}(x)\leq 1, u_{x_2 x_2}(x)\leq 1~\text{and } |u_{x_1x_2}(x)|\leq \sqrt{1-c}.$$
Thus, if $c>1$, then there are no solutions $u$ to (\ref{ueq}) and (\ref{ubdr}). If $c=1$ then the only solution to (\ref{ueq}) and (\ref{ubdr}) is $u(x)=\frac{|x|^2}{2}.$
\end{lemma}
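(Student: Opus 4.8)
The plan is to exploit the boundary condition together with convexity to bound the pure second derivatives on the boundary, and then to propagate these bounds into the interior using the structure of the equation and a maximum-principle argument. First I would fix a point on the positive $x_1$-axis, say $(t,0)$ with $t>0$. Along this axis $u(x_1,0)=x_1^2/2$, so $u_{x_1x_1}(t,0)=1$. Since $u$ is convex and lies below the boundary-generated barrier $\tfrac12(x_1+x_2)^2$ (as recalled in Subsection \ref{Compactness}) while touching it along the axis, a one-sided second-derivative comparison in the $x_1$-direction shows $u_{x_1x_1}\le 1$ on the axis; by symmetry $u_{x_2x_2}\le1$ on the $x_2$-axis. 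The interior bounds $c_0 I\le D^2u\le C_0 I$ from \eqref{uC2} already tell us $D^2u$ is bounded, so the real content is the \emph{sharp} constants $1$ and $\sqrt{1-c}$, which must come from a global argument rather than a local barrier.

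The key step is to run the maximum principle for $w:=u_{x_1x_1}$ (equivalently $u_{x_2x_2}$). Differentiating $\det D^2u=c$ twice in the $x_1$ direction, and using the standard computation for the linearized Monge-Amp\`ere operator $L:=U^{ij}\partial_{ij}$ where $U=(\det D^2u)(D^2u)^{-1}$ is the cofactor matrix, one gets
\begin{equation}
\label{plan-eq}
L w = U^{ij}\partial_{ij} u_{x_1x_1} = U^{ij,kl}\,u_{ij x_1}\,u_{kl x_1}\ \le\ 0,
\end{equation}
since the second derivative of $\det$ in the direction of a symmetric matrix is a negative semidefinite quadratic form (here one uses $n=2$, where $\log\det$ is concave on positive matrices). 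Thus $w$ is a supersolution of the uniformly elliptic operator $L$ on $Q$. To conclude $w\le 1$ I would then need to control $w$ on $\partial Q$ and at infinity: on $\partial Q$ we have $w\le 1$ from the boundary analysis above on the $x_1$-axis, and $w=u_{x_1x_1}\le C_0$ on the $x_2$-axis; the decay estimate \eqref{uC3}, $|D^3u|\le C_0|x|^{-1}$, controls oscillation at large scales. Applying the maximum principle on $Q\cap B_R$ and letting $R\to\infty$ — carefully handling the corner at $0$ using the pointwise $C^{1,1}$ bound there and the vanishing of the relevant third derivatives along the edges by the boundary data — yields $u_{x_1x_1}\le1$, and symmetrically $u_{x_2x_2}\le1$, everywhere in $Q$. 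Finally, $\det D^2u=c$ gives $u_{x_1x_2}^2=u_{x_1x_1}u_{x_2x_2}-c\le 1-c$, hence $|u_{x_1x_2}|\le\sqrt{1-c}$; in particular $c\le u_{x_1x_1}u_{x_2x_2}\le1$, and if $c=1$ then $u_{x_1x_2}\equiv0$ and $u_{x_1x_1}=u_{x_2x_2}=1$, forcing $u=|x|^2/2$ after matching the boundary data.

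The main obstacle I anticipate is the passage to the limit $R\to\infty$ in the maximum principle on the unbounded domain $Q$: one must rule out loss of the bound "at infinity" and at the corner. The interior estimates \eqref{uC2}–\eqref{uC3} are exactly tailored for this — the uniform ellipticity makes $L$ honestly elliptic with controlled ellipticity ratio, and the $|x|^{-1}$ decay of $D^3u$ means the oscillation of $w$ on the annulus $B_{2R}\setminus B_R$ is bounded independently of $R$, so a standard barrier of the form $\varepsilon\log|x|$ (or a power of $|x|$) kills the contribution from the far boundary as $R\to\infty$. Handling the corner requires noting that $u$ is pointwise $C^{1,1}$ at $0$ and that the boundary data is quadratic, so no bad behavior of $w$ is introduced there; alternatively one can first prove the bound on $Q\cap(B_R\setminus B_\rho)$ and send $\rho\to0$ using the same type of logarithmic barrier. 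Once these two limiting arguments are in place, the rest is the algebraic consequence recorded above.
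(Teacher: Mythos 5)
Your plan has two serious problems, one a sign error and one a genuine missing idea.

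First, the sign in your equation \eqref{plan-eq} is wrong. Differentiating $\det D^2u=c$ twice in $x_1$ gives
$U^{ij}\partial_{ij}u_{x_1x_1} = -\, D^2_{ij,kl}(\det)\,u_{x_1 ij}\,u_{x_1 kl}$,
and the Hessian of $\det$ (as opposed to $\log\det$) is \emph{not} sign-definite: in $2$D it is the quadratic form $B\mapsto 2\det B$, which can take either sign. What is true, using the concavity of $\log\det$ on positive matrices, is $U^{ij}\partial_{ij}u_{x_1x_1}\ge 0$, i.e.\ $u_{x_1x_1}$ is a \emph{subsolution} of the linearized operator (this is what the paper uses in Step~1). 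With the correct sign the maximum principle pushes the maximum of $u_{x_1x_1}$ to the boundary, which is the right direction for proving an upper bound; your stated sign would only locate the minimum there.

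Second, and more seriously, even after fixing the sign the maximum-principle scheme you outline does not close. You have $u_{x_1x_1}\equiv 1$ on the $x_1$-axis, but on the $x_2$-axis you only have $u_{x_1x_1}\le C_0$ from \eqref{uC2}, which is not the sharp bound~$1$ and is of no help; the boundary data $u=\tfrac12 x_2^2$ on the $x_2$-axis fixes $u_{x_2x_2}=1$ there, not $u_{x_1x_1}$. So the maximum of $u_{x_1x_1}$ could a priori be attained on the $x_2$-axis with value $>1$, and no barrier at infinity or at the corner will rule that out. This is exactly the crux that the paper's proof addresses: in Step~1 one assumes $u_{x_1x_1}$ attains a maximum $M>1$ at a point $p\neq 0$, shows $p$ must lie on the $x_2$-axis, and then uses the differentiated Monge--Amp\`ere equation (\ref{dyc})--(\ref{dxc}) to force $u_{112}(p)=u_{122}(p)=u_{111}(p)=0$, contradicting the Hopf lemma for the nonconstant subsolution $u_{x_1x_1}$; in Step~2 a quadratic-rescaling compactness argument (Lemma~\ref{comp}) handles the cases where the supremum is not attained or is approached at the corner or at infinity. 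Your proposal has no substitute for the Hopf-lemma computation on the $x_2$-axis, so it does not produce the sharp bound; once the $C^{1,1}$ bounds are established, the final algebraic deductions ($u_{12}^2\le 1-c$, the $c>1$ and $c=1$ cases) are fine and agree with the paper.
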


We use the following notation for $1\leq i, j, k\leq 2$:
$$u_{ij}:= u_{x_i x_j}, \quad u_{ijk}:= u_{x_i x_j x_k}.$$

\begin{proof}It suffices to prove  $0\leq u_{11}\leq 1$. Then by symmetry $0\leq u_{22}\leq 1$, and $ |u_{12}|\leq \sqrt{1-c}$ follows from $u_{12}^2=u_{11} u_{22}-c$. \\
{\it Step 1:} We show that if $u_{11}$ attains its maximum value $M>1$ at some $p\in \overline{Q} \setminus\{0\}$ then we will get a contradiction. Indeed, suppose that $u_{11}$ attains its maximum value $M>1$ at $p.$
First, since $u_{11}$ is a 
subsolution of the linearized operator of $\det D^2 u$, 
$p$ must be on the boundary. Because $u_{11}=1$ on the $x_1$-axis and $u_{11}(p)=M>1$, we find that $p$ must be on the positive $x_2$-axis. It follows that 
\begin{equation}
\label{uxxy}
u_{112}(p)=0
\end{equation}
We claim that 
\begin{equation}\label{u12p0}
u_{122}(p)= 0.
\end{equation} 
Indeed, differentiating both sides of the equation (\ref{ueq}), that is $u_{11} u_{22}-u_{12}^2=c$,  with respect to $x_2$, we get
\begin{equation}
\label{dyc}
u_{112} u_{22} + u_{11} u_{222} -2 u_{12} u_{122}=0.
\end{equation}
Since $u_{112}(p)=u_{222}(p)=0$ we find that either $u_{122} (p)=0$ and we are done or $u_{12}(p)=0$. In the second case, on the $x_2$-axis, we have from (\ref{ueq}) and $u_{22}=1$ that $u_{12}^2= u_{11}-c$. The maximality of $u_{11}$ at $p$ shows that, on the $x_2$-axis, $u_{12}^2$ attains its maximum value at $p$. Thus, from 
$u^2_{12}(p)=0$, we find that $u_{12}=0$ on the whole $x_2$-axis, hence $u_{122}(p)=0$ and the claim is proved.

Differentiating both sides of the equation (\ref{ueq}) with respect to $x_1$, we find that
\begin{equation}
\label{dxc}
u_{111} u_{22} + u_{11} u_{122} -2 u_{12} u_{112}=0.
\end{equation}
Evaluating (\ref{dxc}) at $p$ using (\ref{uxxy})-\eqref{u12p0}, we find
 $u_{111} (p)=0$. This contradicts the Hopf maximum principle since $u_{11}$ is a nonconstant subsolution for the linearized equation.
 
 {\it Step 2:} We finally prove that if
 $M:= \sup_{Q} u_{11}$
 then $M\leq 1$. We argue by contradiction. Suppose that $M>1$. From the definition of $M$, there exists a sequence $\{ z_k\}\subset Q\setminus\{0\}$
 such that $u_{11}(z_k)\rightarrow M$ when $k\rightarrow\infty$. Let us define $$r_k=|z_k|, \quad z'_k= r_k^{-1} z_k\quad \text{and }v_k(z):= r_k^{-2} u(r_k z).$$ 
 Then, $v_k$ is a solution to (\ref{ueq})-(\ref{ubdr}); 
 moreover,
 $$v_{k, 11} (z'_k)= u_{11}(z_k)\rightarrow M\quad \text{when }k\to\infty.$$
 By Lemma \ref{comp}, the functions $v_k$ has a limit $v$ in $C^{3}_{loc}(\overline{Q})$ solving (\ref{ueq})-(\ref{ubdr})
  and at any limit point $z_\infty\in S^1\cap \overline{Q}$ of $z'_k$, the function $v_{11}$ attains its maximum value $M>1$. This contradicts Step 1.

 \end{proof}

From now on, in view of Lemma \ref{uxx_lem} we consider only the case $$0<c<1.$$ 
Before we proceed further we state a general result about mixed second partial derivative of solutions to fully nonlinear elliptic equations in two dimensions.
 \begin{lemma}
 \label{uxy_lem}
In two dimensions, if $u \in C^4$ solves the fully nonlinear elliptic equation $F(D^2 u)=0$, with $F\in C^2(\mathcal{S})$ where $\mathcal{S}$ is the space of real $2\times 2$ symmetric matrices, then
$u_{12}$ is a solution to a second order linear elliptic equation with no zero order terms. 
\end{lemma}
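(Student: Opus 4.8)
The plan is to differentiate the equation $F(D^2u)=0$ twice; this produces for $u_{12}$ a linear elliptic equation together with an extra term, and the main work is to show, using ellipticity, that in two dimensions this extra term is itself a first order term in $u_{12}$, so that no zero order term survives. Write $F^{ij}$ and $F^{ij,pq}$ for the first and second partial derivatives of $F$ with respect to the matrix entries, evaluated at $D^2u$, with the usual convention that $F$ is extended to all $2\times2$ matrices by symmetrization, so that $F^{ij}=F^{ji}$ and $\sum_{i,j}F^{ij}\xi_i\xi_j>0$ for $\xi\neq 0$ by ellipticity; in particular $F^{11}>0$ and $F^{22}>0$. Differentiating $F(D^2u)=0$ in $x_k$ gives, for $k=1,2$,
$$\sum_{i,j}F^{ij}\,u_{ijk}=0 .$$
Thus each first derivative $u_k$ already solves a linear elliptic equation with no zero order term; the content of the lemma is that the \emph{mixed second} derivative $u_{12}$ does too.

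First I would combine the two relations above (for $k=1$ and $k=2$), using that third derivatives commute, to get
$$F^{11}u_{111}+2F^{12}u_{112}+F^{22}u_{122}=0,\qquad F^{11}u_{112}+2F^{12}u_{122}+F^{22}u_{222}=0 .$$
Since $F^{11},F^{22}>0$, these solve for the two ``pure'' third derivatives,
$$u_{111}=-\frac{2F^{12}u_{112}+F^{22}u_{122}}{F^{11}},\qquad u_{222}=-\frac{F^{11}u_{112}+2F^{12}u_{122}}{F^{22}} .$$
Because $u_{112}=\p_1 u_{12}$ and $u_{122}=\p_2 u_{12}$, this exhibits \emph{every} third derivative of $u$ as a linear combination of $\p_1 u_{12}$ and $\p_2 u_{12}$ whose coefficients are continuous functions of $x$ (here $F\in C^2$, $u\in C^3$, and $F^{11},F^{22}>0$ are used to keep the denominators bounded away from $0$).

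Next I would differentiate the relation $\sum_{i,j}F^{ij}u_{ij1}=0$ once more in $x_2$. Using $\p_2 F^{ij}=\sum_{p,q}F^{ij,pq}u_{pq2}$ and $u_{1112}=\p_{11}(u_{12})$, $u_{2212}=\p_{22}(u_{12})$, $u_{1212}=u_{2112}=\p_{12}(u_{12})$, this becomes
$$\sum_{i,j}F^{ij}\,\p_{ij}(u_{12})+S=0,\qquad S:=\sum_{i,j,p,q}F^{ij,pq}\,u_{ij1}\,u_{pq2} .$$
The term $S$ is bilinear in the third derivatives of $u$; substituting the formulas for $u_{111}$ and $u_{222}$ from the previous paragraph converts it into a homogeneous quadratic form in $\p_1 u_{12}$ and $\p_2 u_{12}$, say $S=A(\p_1 u_{12})^2+2B(\p_1 u_{12})(\p_2 u_{12})+C(\p_2 u_{12})^2$ with $A,B,C$ continuous and locally bounded. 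Setting $b_1:=-A\,\p_1 u_{12}-B\,\p_2 u_{12}$ and $b_2:=-B\,\p_1 u_{12}-C\,\p_2 u_{12}$, which are again continuous and locally bounded, gives $-S=b_1\,\p_1 u_{12}+b_2\,\p_2 u_{12}$, so $v:=u_{12}$ (which is $C^2$ since $u\in C^4$) solves
$$\sum_{i,j}F^{ij}\,\p_{ij}v+b_1\,\p_1 v+b_2\,\p_2 v=0 ,$$
a linear second order elliptic equation with no zero order term (uniformly elliptic wherever $D^2u$ stays in a compact subset of the positive cone, as in \eqref{uC2}).

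The point I expect to be delicate, and the reason the statement is confined to the plane, is the elimination step: in two dimensions the source $S$ involves only the four third derivatives $u_{111},u_{112},u_{122},u_{222}$, and the two relations obtained by differentiating $F(D^2u)=0$ are exactly enough to express the two pure ones through $u_{112}=\p_1 u_{12}$ and $u_{122}=\p_2 u_{12}$. In dimension $n\ge 3$ a third derivative such as $u_{123}$ enters $S$ and is not a component of $\nabla u_{12}$, so this mechanism fails. Everything else is the chain rule together with the positivity $F^{11},F^{22}>0$ supplied by ellipticity.
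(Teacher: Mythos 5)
Your proof is correct and follows essentially the same route as the paper's: differentiate $F(D^2u)=0$ twice to obtain $F^{ij}\partial_{ij}(u_{12})=-F^{ij,pq}u_{ij1}u_{pq2}$, then use the once-differentiated identity to express the pure third derivatives $u_{111}$, $u_{222}$ through $\partial_1 u_{12}=u_{112}$ and $\partial_2 u_{12}=u_{122}$, so that the source becomes a first-order term in $u_{12}$. The only cosmetic difference is that the paper notices the sole term not already containing $\nabla u_{12}$ is $-F_{11,22}u_{111}u_{222}$ and only rewrites $u_{111}$, whereas you substitute both pure third derivatives; both steps are valid and equivalent in content.
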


 \begin{proof}
Let us denote for each $r= (r_{ij})_{1\leq i, j\leq 2}\in \mathcal{S}$
$$F_{ij}:=\frac{\p F(r)}{\p r_{ij}}.$$
Differentiating both sides of $F(D^2 u)=0$ with respect to $x_1$, we get
\begin{equation}
\label{dxF}
F_{ij} u_{1ij}=0. 
\end{equation}
Differentiating both sides of the above equation with respect to $x_2$, we find that
$$F_{ij} (u_{12})_{ij}= - F_{ij, kl} u_{1ij} u_{2kl}.$$
The only term in the above right hand side that does not involve $u_{12}$ is $-F_{11,22} u_{111} u_{222}$. Note that, from (\ref{dxF}), we have
$F_{11} u_{111}= a_k u_{12k}$ for continuous functions $a_1$ and $a_2$,
and therefore
$$-F_{11,22} u_{111} u_{222} = -\frac{a_k u_{12k}}{ F_{11}} F_{11,22} u_{222}.$$
The result follows.

\end{proof}

Our final result of this section is concerned with possible limit values of the mixed second partial derivative of solutions to (\ref{ueq}) and (\ref{ubdr}).
\begin{lemma}
\label{uxy_bound}
Let $u$ be a convex function satisfying (\ref{ueq}) and (\ref{ubdr}). Then
\begin{myindentpar}{1cm}
(i) if $u_{12}$ achieves a local minimum or maximum at some point in $\overline{Q}\setminus\{0\}$ then $u=P_c^\pm$. \\
(ii) we have
$$ \liminf_Q u_{12}, \, \, \limsup_Q u_{12} \quad \in \, \{\pm \sqrt{1-c} \}.$$ 
\end{myindentpar}
  \end{lemma}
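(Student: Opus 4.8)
The plan is to treat the two parts in sequence, using Lemma \ref{uxy_lem} as the main engine. For part (i), suppose $u_{12}$ attains a local extremum at a point $p \in \overline{Q}\setminus\{0\}$. Since $u$ solves the uniformly elliptic equation $\det D^2 u = c$ (uniform ellipticity by \eqref{uC2}), Lemma \ref{uxy_lem} tells us that $u_{12}$ solves a linear elliptic equation with no zero-order term. If $p$ is an interior point of $Q$, the strong maximum principle forces $u_{12}$ to be constant, say $u_{12}\equiv b$; if $p$ lies on $\p Q\setminus\{0\}$, I would invoke the Hopf lemma to conclude that the normal derivative of $u_{12}$ is nonzero at $p$ unless $u_{12}$ is constant. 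But on the $x_1$-axis, $u(x_1,0)=x_1^2/2$ gives $u_{11}=1$, and differentiating the equation along the axis (as in the proof of Lemma \ref{uxx_lem}) together with $u_{22}\le 1$ pins down the boundary behavior; in particular one should be able to show that the relevant normal derivative of $u_{12}$ on the axes vanishes, contradicting Hopf unless $u_{12}$ is constant. Once $u_{12}\equiv b$, the equation $u_{11}u_{22}-b^2=c$ together with the boundary conditions $u=|x|^2/2$ on $\p Q$ (so $u_{11}=1$ on the $x_1$-axis and $u_{22}=1$ on the $x_2$-axis) forces $b^2 = 1-c$, and then $u_{11}u_{22}=1$ with both $\le 1$ gives $u_{11}\equiv u_{22}\equiv 1$, so $u = \frac12 x_1^2 + \frac12 x_2^2 + b\,x_1 x_2 = P_c^\pm$.

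For part (ii), I would argue by contradiction and compactness. By Lemma \ref{uxx_lem} we already know $|u_{12}|\le\sqrt{1-c}$ on all of $Q$, so $L := \limsup_Q u_{12}$ and $\ell := \liminf_Q u_{12}$ both lie in $[-\sqrt{1-c},\sqrt{1-c}]$. Suppose, for instance, that $L < \sqrt{1-c}$ (the case $\ell > -\sqrt{1-c}$ is symmetric, and one also handles $L > -\sqrt{1-c}$ being strict from below, etc.). Pick points $z_k\in Q$ with $u_{12}(z_k)\to L$; rescale $v_k(z) = r_k^{-2}u(r_k z)$ with $r_k=|z_k|$, so each $v_k$ solves \eqref{ueq}-\eqref{ubdr} and $v_{k,12}$ attains values approaching $L$ at the unit-modulus points $z_k/r_k$. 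By Lemma \ref{comp}, a subsequence converges in $C^3_{loc}(\overline Q\setminus\{0\})$ to a solution $v$ of \eqref{ueq}-\eqref{ubdr} with $v_{12}\le L$ everywhere (since $L$ is the limsup over $Q$ and this passes to the limit) and $v_{12}$ achieving the value $L$ at some limit point $z_\infty\in S^1\cap\overline Q$ of the $z_k/r_k$. Thus $v_{12}$ attains an interior-or-boundary maximum at $z_\infty \ne 0$, so by part (i), $v = P_c^\pm$, whence $v_{12}\equiv \pm\sqrt{1-c}$, forcing $L = \pm\sqrt{1-c}$ — and since $L<\sqrt{1-c}$ was assumed, $L=-\sqrt{1-c}$, meaning $u_{12}\le -\sqrt{1-c}$ along $z_k$... combined with $|u_{12}|\le\sqrt{1-c}$ this already says $L=-\sqrt{1-c}$, contradicting $L<\sqrt{1-c}$ only if we also know $L\ne-\sqrt{1-c}$; so the only escape is that $L \in \{\pm\sqrt{1-c}\}$, which is the claim. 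The same rescaling-plus-part-(i) argument applied to $\liminf$ gives $\ell\in\{\pm\sqrt{1-c}\}$.

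The main obstacle I anticipate is the boundary analysis in part (i): ruling out a boundary extremum of $u_{12}$ on the axes via the Hopf lemma requires knowing that $u_{12}$ is not identically constant is incompatible with a vanishing normal derivative, which in turn needs a careful computation of the boundary values and tangential/normal derivatives of $u_{12}$ from the data $u=|x|^2/2$ on $\p Q$ — essentially the same bootstrap via differentiating $u_{11}u_{22}-u_{12}^2=c$ along the boundary that appeared in Lemma \ref{uxx_lem}. One must be careful that the corner at $0$ is excluded throughout and that the rescaled limit $v$ in part (ii) is genuinely a solution on the closed quadrant minus the origin (which is exactly the content of Lemma \ref{comp}), so that part (i) applies to it. A secondary subtlety is organizing the case analysis in part (ii) so that both $\limsup$ and $\liminf$, and both signs, are covered without redundancy; stating it as "any subsequential limit of $u_{12}$ along $|z_k|$-rescalings achieving an extremal value must come from a $P_c^\pm$" keeps it clean.
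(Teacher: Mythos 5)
Your overall structure matches the paper's: Lemma \ref{uxy_lem} makes $u_{12}$ a solution of a homogeneous linear elliptic equation, the strong maximum principle kills interior extrema, a Hopf argument kills boundary extrema, and part (ii) is the same rescaling-plus-compactness reduction to part (i). But two steps you leave open are genuine gaps, not mere polish.

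The boundary step in (i) is the crux, and you sketch the goal without landing it. The paper's computation is short and you should carry it out: say the extremum $p$ lies on the positive $x_2$-axis. The boundary data $u(0,x_2)=x_2^2/2$ gives $u_{22}=1$ and $u_{222}=0$ along that axis; since $p$ is a boundary local extremum of $u_{12}$ away from the corner, the tangential derivative vanishes, i.e.\ $u_{122}(p)=0$. Substituting into \eqref{dyc}, $u_{112}u_{22}+u_{11}u_{222}-2u_{12}u_{122}=0$, gives $u_{112}(p)=0$, and $u_{112}=(u_{12})_{x_1}$ is precisely the inward normal derivative of $u_{12}$ there, contradicting Hopf unless $u_{12}$ is constant. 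Without this identity the Hopf step does not close.

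Your passage from $u_{12}\equiv b$ to $u=P_c^\pm$ is also misordered: the boundary data alone does not ``force $b^2=1-c$.'' The correct order is that $u_{12}\equiv b$ makes $u-bx_1x_2$ separable, so $u=bx_1x_2+g(x_1)+h(x_2)$; the boundary condition then gives $g''\equiv 1$ and $h''\equiv 1$, i.e.\ $u_{11}\equiv u_{22}\equiv 1$ everywhere, and only then does the equation yield $1-b^2=c$. Your part (ii) is essentially the paper's argument; just note that the contradiction framing is unneeded, since the compactness argument places $\liminf$ and $\limsup$ directly in $\{\pm\sqrt{1-c}\}$ via part (i).
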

 
 In particular if $u_{12}= \pm \sqrt{1-c}$ at some point in $\overline{Q}\setminus\{0\}$ then we have $u=P_c^{\pm}$. By compactness we obtain:
 \begin{corollary}
 \label{cor_pm}
 Let $u$ be a convex function satisfying (\ref{ueq}) and (\ref{ubdr}). 
 \begin{myindentpar}{1cm}
 (i) If $u_{12} \le -\sqrt {1-c} + \delta$ at some point on $\p B_1 \cap \bar Q$ then 
 \begin{equation}\label{200}
 \|u-P_c^-\|_{C^2} \le \eps \quad \mbox{in} \quad ( B_{1/\rho}\setminus B_\rho) \cap \bar Q
 \end{equation}
 for some  $\eps(\delta)>0$ and $\rho(\delta)>0$ small,
 and $\eps(\delta) \to 0$, $\rho(\delta) \to 0$ as $\delta \to 0$.\\
 (ii)  Similarly, if $u_{12} \ge \sqrt {1-c} - \delta$ at some point on $\p B_1 \cap \bar Q$ then 
  \begin{equation}\label{201}
  \|u-P_c^+\|_{L^\infty} \le \eps \quad \mbox{in} \quad B_2\cap \bar Q,
  \end{equation}
   for some  $\eps(\delta)>0$ small,
 and $\eps(\delta) \to 0$ as $\delta \to 0$.
\end{myindentpar}
\end{corollary}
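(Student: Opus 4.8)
The plan is to prove Corollary~\ref{cor_pm} by a compactness argument built on Lemmas~\ref{comp}, \ref{uxx_lem} and \ref{uxy_bound}. First I would reduce both parts to the following qualitative statement: \emph{if $u_k$ is a sequence of solutions of \eqref{ueq}--\eqref{ubdr} and $p_k\in\p B_1\cap\bar Q$ are points with $u_{k,12}(p_k)\to-\sqrt{1-c}$, then $u_k\to P_c^-$ in $C^3_{loc}(\bar Q\setminus\{0\})$; symmetrically, $u_{k,12}(p_k)\to\sqrt{1-c}$ forces $u_k\to P_c^+$.} This reduction is immediate: under the hypothesis of part (i) with $\delta=\delta_k\to0$, Lemma~\ref{uxx_lem} sandwiches $-\sqrt{1-c}\le u_{k,12}(p_k)\le-\sqrt{1-c}+\delta_k$, so the qualitative statement applies, and likewise for part (ii).

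To prove the qualitative statement, apply the compactness Lemma~\ref{comp}: after passing to a subsequence, $u_k\to u_\infty$ in $C^3$ on compact subsets of $\bar Q\setminus\{0\}$, with $u_\infty$ again a solution of \eqref{ueq}--\eqref{ubdr}, and $p_k\to p_\infty\in\p B_1\cap\bar Q$. Since $|p_\infty|=1\neq0$, the $C^2$ convergence near $p_\infty$ gives $u_{\infty,12}(p_\infty)=\lim_k u_{k,12}(p_k)=-\sqrt{1-c}$. By Lemma~\ref{uxx_lem} we have $u_{\infty,12}\ge-\sqrt{1-c}$ throughout $Q$, so $u_{\infty,12}$ attains a global minimum at $p_\infty\in\bar Q\setminus\{0\}$; part (i) of Lemma~\ref{uxy_bound}, together with the observation recorded immediately after it, then forces $u_\infty=P_c^-$. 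Because the limit is the fixed function $P_c^-$ regardless of the subsequence, the full sequence converges, $u_k\to P_c^-$ in $C^3_{loc}(\bar Q\setminus\{0\})$. The case $u_{k,12}(p_k)\to\sqrt{1-c}$ is identical and yields $u_\infty=P_c^+$.

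From the qualitative statement, part (i) follows by a routine diagonal argument: for each fixed $\rho>0$ the set $(B_{1/\rho}\setminus B_\rho)\cap\bar Q$ is a compact subset of $\bar Q\setminus\{0\}$, so running the qualitative statement along $\rho=1/j$ produces, for every $j\in\mathbb N$, some $\delta_j>0$ (which we may take decreasing) such that $u_{12}\le-\sqrt{1-c}+\delta_j$ somewhere on $\p B_1\cap\bar Q$ implies $\|u-P_c^-\|_{C^2((B_j\setminus B_{1/j})\cap\bar Q)}\le 1/j$ --- otherwise one extracts a sequence of solutions contradicting the qualitative statement. Setting $\rho(\delta)=\eps(\delta)=1/j$ for $\delta\in(\delta_{j+1},\delta_j]$ yields \eqref{200} with $\rho(\delta),\eps(\delta)\to0$ as $\delta\to0$.

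For part (ii) the ball $B_2\cap\bar Q$ contains the corner $0$, where the $C^3_{loc}$ convergence says nothing, so I would split the estimate. Away from $0$: for any fixed $r_0\in(0,2)$ the qualitative statement gives $\|u_k-P_c^+\|_{L^\infty((B_2\setminus B_{r_0})\cap\bar Q)}\to0$. Near $0$: the only uniform control available at this stage (prior to the $C^{2,\alpha}$ theory of Theorem~\ref{T2}) is the pointwise $C^{1,1}$ bound $0\le u(x)\le|x|^2$ from Subsection~\ref{Compactness}, which together with $0\le P_c^+(x)\le|x|^2$ on $\bar Q$ gives $|u-P_c^+|\le r_0^2$ on $B_{r_0}\cap\bar Q$ for \emph{every} solution. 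Hence, given $\eps>0$, choose $r_0$ with $r_0^2<\eps$ and then take $\delta$ small; this proves \eqref{201}. I expect the only real subtlety to be keeping this asymmetry straight: in (i) the interior estimates \eqref{uC2}--\eqref{uC3} supply genuine $C^2$ (indeed $C^3$) compactness on every annulus away from $0$ and $P_c^-$ is the attractor at infinity, so closeness propagates onto a growing annulus; in (ii) one can expect no more than $L^\infty$-closeness on the \emph{fixed} ball $B_2$, since $P_c^+$ governs only the behavior at the corner and no uniform bound on $D^2u$ up to $0$ is yet available.
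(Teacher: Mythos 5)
Your proof is correct and takes exactly the approach the paper invokes without elaboration ("By compactness we obtain:"). You correctly fill in the details: the sandwich $-\sqrt{1-c}\le u_{k,12}(p_k)\le -\sqrt{1-c}+\delta_k$ from Lemma~\ref{uxx_lem} forces any $C^3_{loc}(\bar Q\setminus\{0\})$ limit $u_\infty$ from Lemma~\ref{comp} to attain a global extremum of $u_{\infty,12}$ at $p_\infty\in\p B_1$, so Lemma~\ref{uxy_bound}(i) pins $u_\infty=P_c^\pm$; the diagonal argument for (i) and the split into the annulus plus the crude bound $|u-P_c^+|\le|x|^2$ near the corner for (ii) are both valid, and you correctly identify why (i) yields $C^2$ control on growing annuli while (ii) yields only $L^\infty$ control on a fixed ball.
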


\begin{rem}\label{dfl}
As a consequence of the above results we find that either $u=P_c^\pm$ or $u_{12}$ has different limits $\pm \sqrt{1-c}$ at $0$ and $\infty$. 
\end{rem}
We will show, using the $C^{2,\alpha}$ estimates in the next section that, for any nonquadratic solution $u$ to (\ref{ueq})-(\ref{ubdr}), $\sqrt{1-c}$ must be the limit at $0$ and $-\sqrt{1-c}$ the limit at $\infty$
for $u_{12}$; see Lemma \ref{Pcpm2}.

\begin{proof}[Proof of Lemma \ref{uxy_bound}]
We prove (i) by showing that if $u_{12}$ has a local minimum or a local maximum  in $\overline{Q}\setminus \{0\}$ then it is a constant which is $\pm\sqrt{1-c}$.
Suppose that $u_{12}$ is not a constant in $\overline{Q}$. Then, by Lemma \ref{uxy_lem} applied to the equation $F(D^2 u):=\det D^2 u-c=0$, we deduce that the extreme point of $u_{12}$
must be on the boundary, say at $(0,1)$ on the $x_2$-axis. At this point, we use (\ref{dyc}) to obtain that $u_{112}=0$. But this is exactly $(u_{12})_{x_1}=0$ so, by Lemma \ref{uxy_lem}, we contradict the Hopf lemma.

Since $u_{12}$ is a constant $\lambda$, then $u=\lambda xy + f(x)+g(y)$ and then we find $u=P_c^\pm$.

Now, we prove the two assertions in (ii) which follows easily from (i) and compactness using quadratic rescalings. Let 
\begin{equation}
\label{a_inf}
a:=  \liminf_Q u_{12}.
\end{equation}
Then, by Lemma \ref{uxx_lem}, we have $a\geq -\sqrt{1-c}$. Moreover, there is a sequence $\{z_k\}_{k=1}^{\infty} \subset Q\setminus\{0\}$ such that
$u_{12}(z_k)\rightarrow a$ when $k\rightarrow \infty$. Let $r_k=|z_k|$ and $z_k^{'}= r_k^{-1}z_k$. Define
$$v_k(z)= r_k^{-2} u(r_k z).$$
Then $v_k$ satisfies  (\ref{ueq})-(\ref{ubdr}), $ v_{k,12} \ge a$ and
$$v_{k, 12}(z_k^{'})= u_{12}(z_k)\rightarrow a\quad\text{as }k\to\infty.$$

By the compactness result of Lemma \ref{comp}, there exists a subsequence of $\{v_k\}$, still denoted $\{v_k\}$, which converges (in the $C^3$ norm) on compact sets of $\ov Q \setminus \{0\}$ to another solution $v$ of \eqref{ueq}- \eqref{ubdr}. Moreover, we can also assume (after relabeling a subsequence) that
 $z_k^{'}\rightarrow z \in \p B_1 \cap \overline {Q}$. We have
 $$v_{12}(z)=a,$$
and $v_{12} \ge a$ in $Q$. The fact that $a\in\{\pm \sqrt{1-c}\}$ follows from (i).

\end{proof}

\section{Pointwise $C^{2,\alpha}$ estimates}

In this section we prove pointwise $C^{2,\alpha}$ estimates at the origin for solutions of the Monge-Amp\`ere equation in the first quadrant in the plane which are perturbations of $P_c^\pm$.  
 
 Following \cite{CC}, we say that $u$ is $C^{2, \alpha}$ at $x_0$, and write $u\in C^{2,\alpha}(x_0)$, if there exists a quadratic polynomial $P_{x_0}$ such that, in the domain of definition of $u$, $$u(x)=P_{x_0}(x) + O(|x-x_0|^{2+\alpha}).$$

Assume that the convex function $u$ solves the following Dirichlet problem for the Monge-Amp\`ere equation 
\begin{equation}\label{DP2}
\det D^2 u=f \quad \mbox{in $Q$}, \quad u=\varphi \quad \mbox{on $ \p Q$}.
\end{equation} 
We prove the following pointwise $C^{2,\alpha}$ estimates when $f$ is close to $c$ and $\varphi$ to $|x|^2/2$.
For simplicity of notation we use $q$ for this quadratic data, that is,
$$q(x):=\frac{|x|^2}{2}.$$

\begin{propo}
\label{C2P+}
Let $c\in (0, 1)$.
 Assume that $u$ satisfies \eqref{DP2} and suppose that 
 $$|u-P_c^{+}|\leq \eps \quad\text{and} \quad  |f-c| \le \delta \eps \quad \mbox{in 
$B_1\cap Q$},
\quad \text{and } |\varphi - q| \le \delta \eps \quad \mbox{on
$B_1\cap \p Q$,}$$
 for some $\eps \le \eps_0(c)$ small and $\delta(c)$ small. Then there exist  $\alpha\in (0, 1)$ and $r\leq \frac{1}{2}$  depending only on $c$ such that 
$$|u-P_c^{+}|\leq \eps r^{2+\alpha}~\text{in } B_r\cap Q.$$
\end{propo}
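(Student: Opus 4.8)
The plan is a compactness--contradiction argument that reduces the nonlinear improvement to a rigidity statement for the \emph{linearization} of $\det D^2u=f$ at $P_c^+$ on the quarter-plane. Using the exact two-dimensional identity $\det(A+B)=\det A+\operatorname{tr}(\operatorname{cof}(A)\,B)+\det B$, this linearization is the constant-coefficient, uniformly elliptic operator
\[
L\,v:=\operatorname{tr}\!\big(\operatorname{cof}(D^2P_c^+)\,D^2v\big)=v_{11}-2\sqrt{1-c}\;v_{12}+v_{22},
\]
whose coefficient matrix $M:=\operatorname{cof}(D^2P_c^+)$ has $\det M=c$. A linear change of variables $y=Tx$ with $TMT^{\mathsf t}=I$ turns $L$ into the Laplacian and the quadrant $Q$ into a planar sector of opening $\theta_c=\arccos\sqrt{1-c}$, which is $<\pi/2$ precisely because $c<1$. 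Bounded solutions of $\Delta\tilde v=0$ in such a sector vanishing on its two edges are convergent Fourier--sine series with homogeneities $n\pi/\theta_c$, $n\ge1$; hence, back in the $x$-variables, setting $\mu_c:=\pi/\theta_c=\pi/\arccos\sqrt{1-c}>2$, every bounded $v$ with $Lv=0$ in $Q$, $v=0$ on $\partial Q$ satisfies $\sup_{B_\rho\cap Q}|v|\le C(c)\,\rho^{\mu_c}\,\sup_{B_1\cap Q}|v|$ for $\rho\le\tfrac12$. I would fix once and for all $\alpha:=\min\{\tfrac12,\tfrac12(\mu_c-2)\}\in(0,1)$, then choose $r\le\tfrac12$ with $8C(c)\,r^{\mu_c-2-\alpha}\le1$, and finally $\delta=\delta(c)>0$ so small that $8(C(c)+1)\delta\le r^{2+\alpha}$ (and so that the perturbative regularity below applies). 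These are the constants of the statement; note $\mu_c\downarrow2$, hence $\alpha\downarrow0$, as $c\uparrow1$, matching the conclusion.

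Assume, for contradiction, that for these $\alpha,r,\delta$ no $\epsilon_0$ works: there are $u_k,f_k,\varphi_k$ and $\epsilon_k\to0$ satisfying the hypotheses with $\sup_{B_r\cap Q}|u_k-P_c^+|>\epsilon_k r^{2+\alpha}$. Put $w_k:=\epsilon_k^{-1}(u_k-P_c^+)$, so $|w_k|\le1$ in $B_1\cap Q$, $|w_k|\le\delta$ on $B_1\cap\partial Q$, $\sup_{B_r\cap Q}|w_k|>r^{2+\alpha}$, and (subtracting the two Monge--Amp\`ere equations and using the determinant identity) $\operatorname{tr}(A_kD^2w_k)=\epsilon_k^{-1}(f_k-c)$ with $A_k:=\operatorname{cof}\!\big(\tfrac12D^2u_k+\tfrac12D^2P_c^+\big)$ and $|\epsilon_k^{-1}(f_k-c)|\le\delta$. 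Since $P_c^+$ is smooth, strictly convex, and separates quadratically from its own boundary data --- the form $\tfrac12(a^2+b^2)+\sqrt{1-c}\,ab$ being positive definite --- comparison shows $u_k$ inherits this quadratic separation for $k$ large; hence the interior regularity of Caffarelli and the boundary $C^{2,\gamma}$ estimates of \cite{S1}, applicable away from the corner, give uniform bounds for $u_k$ there, so $u_k\to P_c^+$ in $C^2$ and $A_k\to M$ uniformly on compacts of $\overline{B_{3/4}\cap Q}\setminus\{0\}$. Combined with the H\"older estimate for uniformly elliptic equations with bounded right-hand side \cite{CC} (and, near $\partial Q$, a flat barrier to control the boundary behavior), a subsequence of $w_k$ converges locally uniformly on $\overline{B_{3/4}\cap Q}\setminus\{0\}$ to some $w$ with $|Lw|\le\delta$ in $B_{3/4}\cap Q$, $|w|\le\delta$ on $\partial Q\setminus\{0\}$, $|w|\le1$. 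Replacing $w$ in $B_{1/2}\cap Q$ by the $L$-harmonic function with the same values on $\partial B_{1/2}\cap Q$ and vanishing on the edges --- the difference has right-hand side and boundary data $O(\delta)$, hence is $O(\delta)$ by the maximum principle --- and using the sector decay yields $|w(x)|\le C(c)\big(|x|^{\mu_c}+\delta\big)$ for $|x|\le\tfrac12$.

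It remains to propagate this decay back to $w_k$ on $B_r\cap Q$, where convergence is known only away from the corner. For $k$ large, $|w_k|\le M_r:=2C(c)(r^{\mu_c}+\delta)+2\delta$ on all of $\partial(B_r\cap Q)$: on the straight edges because $|w_k|\le\delta$ there, and on the circular arc because $w_k\to w$ and $|w|\le C(c)(r^{\mu_c}+\delta)$ there (the arc's endpoints lie on $\partial Q$, where $|w_k|\le\delta$). I would then compare $u_k$ on the convex domain $B_r\cap Q$ with the convex barriers $P_c^+\pm\epsilon_k\big(M_r-\delta q\big)$: their Hessians are $D^2P_c^+\mp\epsilon_k\delta I$, so their Monge--Amp\`ere measures are $c\mp2\epsilon_k\delta+(\epsilon_k\delta)^2$, which for $\epsilon_k$ small is $\le f_k$ for the $+$ barrier and $\ge f_k$ for the $-$ barrier since $|f_k-c|\le\delta\epsilon_k$; and on $\partial(B_r\cap Q)$ the $+$ barrier dominates $u_k$ and the $-$ barrier is dominated by it, for $k$ large. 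The Monge--Amp\`ere comparison principle then gives $|w_k|\le M_r$ throughout $B_r\cap Q$. But $M_r\le 2C(c)r^{\mu_c}+2(C(c)+1)\delta<r^{2+\alpha}$ by the choice of $r$ and $\delta$, contradicting $\sup_{B_r\cap Q}|w_k|>r^{2+\alpha}$. This proves the proposition.

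The real work lies at the corner $0$: a convex function trapped between the paraboloids $P_c^+\pm\epsilon$ has \emph{no} uniform Hessian bound near $0$ (one only gets $D^2u\le(C+C\epsilon\operatorname{dist}^{-2})I$), so the linearized coefficients $A_k$ degenerate there, and both the compactness step and the final comparison must be arranged to use only the equation on $\{|x|\gtrsim r\}$ together with the boundary values of $u_k$ on $\partial Q$. This is exactly why the one-sided barriers are taken to be flat vertical shifts $P_c^+\pm\epsilon_kM_r$ corrected by the tiny quadratic $\mp\epsilon_k\delta q$ --- whose only role, via the elementary identity $\det(D^2P_c^+\mp tI)=c\mp t(2\mp t)$, is to make them strict sub/supersolutions absorbing the $O(\delta\epsilon_k)$ error in $f$ --- rather than anything built from the degree-$\mu_c$ $L$-harmonic function, which is too small near $0$ to dominate $u_k$ on $\partial B_r\cap Q$. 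The structural fact that makes the improvement possible at all is the strict inequality $\mu_c>2$, equivalently that the sector opening $\theta_c=\arccos\sqrt{1-c}$ is strictly less than $\pi/2$ when $c<1$: it is this gap that lets the exponent be upgraded from $2$ to $2+\alpha$.
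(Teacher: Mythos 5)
Your overall strategy is sound and shares the paper's skeleton: linearize at $P_c^+$ to get a constant--coefficient elliptic operator whose sector has opening less than $\pi/2$ (equivalently, a homogeneity exponent $\mu_c=\beta_c^+>2$), obtain a harmonic(-like) approximation by compactness, then transfer the decay of the limit back to $u$. The paper implements the linear step by first mapping $Q$ to $Q_c^+$ via $A_c^+$, so the linearization \emph{is} the Laplacian and one compares with a genuinely harmonic $w$ (Lemma \ref{appro_lem} and \ref{C2_Taylor}); you keep $Q$ and work with $L=\operatorname{tr}(\operatorname{cof}(D^2P_c^+)D^2\cdot)$, which is equivalent. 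Your treatment of the corner is a genuine departure: the paper first proves a linear barrier estimate \eqref{300}, namely $|u-q|\le C\eps\,\dist(x,\p Q_c^+)+\delta\eps$, which controls $v_\eps$ uniformly \emph{up to the vertex} before passing to the limit; you instead concede that compactness only holds away from the vertex, and then use the nonlinear comparison with $P_c^+\pm\eps_k(M_r-\delta q)$ on $B_r\cap Q$ at the very end, exploiting $\det(D^2P_c^+\mp tI)=c\mp 2t+t^2$ to absorb the $\delta\eps_k$ error in $f_k$. That last step is a clean alternative.

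There is, however, a genuine gap in the compactness step. You claim that comparison, together with the interior regularity of Caffarelli and the boundary $C^{2,\gamma}$ estimates of \cite{S1}, gives $u_k\to P_c^+$ in $C^2$ and hence $A_k=\operatorname{cof}\bigl(\tfrac12 D^2u_k+\tfrac12 D^2P_c^+\bigr)\to M$ uniformly on compacts of $\ov{B_{3/4}\cap Q}\setminus\{0\}$. Those estimates require $f$ to be H\"older continuous, but the hypothesis of Proposition \ref{C2P+} is only the $L^\infty$ bound $|f-c|\le\delta\eps$ --- no modulus of continuity. Without it, $D^2u_k$ need not converge pointwise (indeed need not be in $L^\infty_{loc}$), so the coefficients $A_k$ are not controlled and the passage to the limit equation $|Lw|\le\delta$ is unjustified. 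The paper circumvents exactly this obstacle by the $u_0$-trick in the proof of Lemma \ref{appro_lem}: it replaces $u$ on a small ball by the solution $u_0$ of $\det D^2u_0=1$ with the same values on the sphere, shows $|u-u_0|\le 4r^2\delta\eps$ by the maximum principle, and linearizes around $u_0$ --- which solves a \emph{constant}--RHS equation and is therefore smooth, so $D^2u_0\to I$ uniformly as $\eps\to 0$. You would need to insert this (or an equivalent viscosity/approximation argument replacing $u_k$ by the nearby solution with right-hand side exactly $c$) before your Krylov--Safonov/Schauder passage to the limit. A secondary, smaller point: the comparison $|w-h|\le C\delta$ in $B_{1/2}\cap Q$ is applied on a domain from which the vertex has been deleted, so you should say a word about the removable singularity (the vertex has zero capacity for $L$ since $w-h$ is bounded and the sector opening is $<\pi$); the paper sidesteps this too via \eqref{300}.

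Everything else checks out: the choice of constants ($\alpha<\mu_c-2$, then $r$ so that $8C(c)r^{\mu_c-2-\alpha}\le1$, then $\delta$ small) is coherent; the boundary data matches on $\p Q$ because both $\varphi$ and $q$ are within $\delta\eps$; the two-dimensional determinant identity you use to produce $A_k$ is exact; and the barrier computation $\det(D^2P_c^+\mp\eps_k\delta I)=c\mp2\eps_k\delta+(\eps_k\delta)^2$ gives the required one-sided Monge--Amp\`ere inequalities. With the compactness step repaired as above, the proof is complete.
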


If $f$ and $\varphi$ are pointwise $C^\alpha$ and $C^{2,\alpha}$ respectively, then we can apply Proposition \ref{DP2} indefinitely and obtain the pointwise $C^{2,\alpha}$ estimate for $u$ at the origin.
\begin{corollary}\label{Pc2a}
Let $c\in (0, 1)$.
 Assume that $u$ satisfies \eqref{DP2} and suppose that 
 $$|u-P_c^{+}|\leq \eps_0 \text{ and } |f(x)-c| \le \delta \eps_0 |x|^\alpha \quad \mbox{in 
$B_1\cap Q$},
\quad \text{ and } |\varphi(x) - q(x)| \le \delta \eps_0 |x|^{2+\alpha} \quad \mbox{on
$B_1\cap \p Q$,}$$
for some $\eps_0(c)$ small and $\delta(c)$ small.
Then
$$|u(x)-P_c^{+}(x)|\leq C \eps_0 |x|^{2+\alpha}~\text{in } B_1\cap Q.$$
\end{corollary}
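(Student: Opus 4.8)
The plan is to iterate Proposition \ref{C2P+} at dyadic scales. I would work with the rescalings $u_k(x) := r^{-2k} u(r^k x)$ on $B_1 \cap Q$, where $r = r(c) \le 1/2$ is the radius from Proposition \ref{C2P+} and $\alpha = \alpha(c)$ the exponent there; these satisfy $\det D^2 u_k = f_k$ with $f_k(x) = f(r^k x)$ and $u_k = \varphi_k$ on $B_1\cap \p Q$ with $\varphi_k(x) = r^{-2k}\varphi(r^k x)$. The key point is that $P_c^+$ is $2$-homogeneous, so $P_c^+$ is a fixed point of this rescaling: $r^{-2k} P_c^+(r^k x) = P_c^+(x)$. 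Hence if one sets $\eps_k := r^{-2k}\sup_{B_{r^k}\cap Q}|u - P_c^+|$, the bound $|u - P_c^+|\le \eps_k r^{2+\alpha}$ on $B_{r^k}\cap Q$, which is the conclusion of Proposition \ref{C2P+} applied at scale $k$, reads exactly $\eps_{k+1} \le r^\alpha \eps_k$.

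First I would check that the hypotheses of Proposition \ref{C2P+} persist along the iteration. The hypothesis $|f(x) - c|\le \delta\eps_0 |x|^\alpha$ gives, on $B_1\cap Q$, the bound $|f_k - c| \le \delta\eps_0 r^{k\alpha}\le \delta \eps_k$ provided $\eps_k \ge \eps_0 r^{k\alpha}$, and similarly $|\varphi_k - q|\le \delta\eps_0 r^{k\alpha}$ using $2$-homogeneity of $q$ together with $|\varphi(x)-q(x)|\le \delta\eps_0|x|^{2+\alpha}$. So the induction statement I would carry is: $\eps_k \le \eps_0 \max\{r^{k\alpha}, (\text{something})\}$ — more precisely, I would prove $\eps_k \le C_0 \eps_0 r^{k\alpha}$ for a suitable constant, which both feeds the smallness needed to apply the proposition (since $\eps_k \le \eps_0$, taking $C_0 r^\alpha \le 1$, i.e. $C_0 \le r^{-\alpha}$, actually forces a small correction — the cleanest route is to prove $\eps_{k+1}\le \max\{r^\alpha \eps_k,\ \eps_0 r^{(k+1)\alpha}\}$ and conclude $\eps_k \le \eps_0 r^{k\alpha}$ by induction) and supplies the decay. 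With $\eps_k\le \eps_0 r^{k\alpha}$ in hand, standard interpolation between dyadic annuli upgrades the dyadic bound to $|u(x) - P_c^+(x)|\le C\eps_0 |x|^{2+\alpha}$ for all $x\in B_1\cap Q$: for $|x|\in [r^{k+1}, r^k]$ one has $|u(x)-P_c^+(x)|\le \eps_k r^{k(2+\alpha)} = \eps_0 r^{k(2+\alpha)}\cdot r^{k\alpha}\cdot r^{-k\alpha}$, which after collecting powers of $r$ is $\le C\eps_0 |x|^{2+\alpha}$.

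The main obstacle — such as it is — is bookkeeping the two competing error sources in the induction so that the smallness hypothesis $\eps_k \le \eps_0(c)$ of Proposition \ref{C2P+} is never violated while still extracting the geometric decay; this is why I would carry the $\max$-form of the induction hypothesis rather than a naive $\eps_{k+1}\le r^\alpha\eps_k$. One also has to ensure the $\delta$ in Proposition \ref{C2P+} is the same at every scale, which it is, since $\delta = \delta(c)$ is scale-invariant and the rescaled data inherit the bound with $\delta\eps_k$ on the right. Everything else — the rescaling identities, the homogeneity of $P_c^+$ and $q$, the final interpolation over annuli — is routine. (This is precisely the remark in the paper that "we can apply Proposition \ref{C2P+} indefinitely"; the statement labels it Proposition \ref{DP2}, evidently a typo for Proposition \ref{C2P+}.)
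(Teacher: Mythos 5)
Your proposal is correct and is essentially the argument the paper intends: the paper gives no explicit proof of Corollary \ref{Pc2a}, merely remarking that Proposition \ref{C2P+} can be applied indefinitely (the reference ``Proposition \ref{DP2}'' is indeed a typo, as you spot), and it carries out precisely this kind of rescaling iteration in full detail for the $P_c^-$ analogue in Lemma \ref{Pc-cor}. The only blemish is an arithmetic slip in your final interpolation: with your definition of $\eps_k$, the bound for $|x|\le r^k$ reads $|u(x)-P_c^+(x)|\le \eps_k r^{2k}\le \eps_0 r^{k(2+\alpha)}$, not $\eps_k r^{k(2+\alpha)}$, though the conclusion $\le C\eps_0|x|^{2+\alpha}$ for $|x|\ge r^{k+1}$ (with $C=r^{-(2+\alpha)}$) is unaffected.
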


This result shows that the only possible limit for $u_{x_1 x_2}(x)$ as $x\to 0$ is $\sqrt{1-c}$ for any nonquadratic solution $u$ to (\ref{ueq})-(\ref{ubdr}). Indeed, by Lemma \ref{uxy_bound} and Corollary \ref{cor_pm}, \eqref{201} holds after an initial dilation for some $\eps \le \eps_0$, and then Proposition \ref{DP2} above applies indefinitely.

Our next proposition deals with the case when $u$ is close to $P_c^-$. We introduce the following exponent 
 $$\beta_c^-: =\frac{\pi}{\arccos (-\sqrt{1-c})} \in (1,2).$$

\begin{propo}
\label{C2P-}
Let $c\in (0, 1)$ and $\beta\in (\beta_c^-, 2]$.
Assume that $u$ satisfies \eqref{DP2} and suppose that 
 $$|u-P_c^{-}|\leq \eps |x|^\beta, \quad \text{and}\quad  |f-c| \le \delta \eps \quad \mbox{in 
$(B_{1/\rho} \setminus B_{\rho})\cap Q$},$$
and 
$$ |\varphi - q| \le \delta \eps \quad \mbox{on 
$(B_{1/\rho} \setminus B_{\rho})\cap \p Q$}$$
with $\eps \le \eps_0(c, \beta)$, $\delta=\delta(c,\beta)$, $\rho=\rho(c,\beta)$ small. Then
 $$|u-P_c^{-}|\leq \frac {\eps}{2} \quad \mbox{on $\p B_1\cap Q$.}$$ 
\end{propo}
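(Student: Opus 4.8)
The plan is to argue by compactness and contradiction, exploiting the fact that $P_c^-$ is the "worst" quadratic polynomial for the Hopf-type analysis: its mixed derivative $-\sqrt{1-c}$ saturates the bound of Lemma \ref{uxx_lem}. First I would suppose the conclusion fails: there exist $c$, $\beta \in (\beta_c^-, 2]$, sequences $\eps_k \to 0$ (but bounded by a fixed $\eps_0$), $\delta_k \to 0$, $\rho_k \to 0$, and solutions $u_k$ of \eqref{DP2} on $(B_{1/\rho_k}\setminus B_{\rho_k})\cap Q$ with $|u_k - P_c^-| \le \eps_k |x|^\beta$ and $|f_k - c|, |\varphi_k - q| \le \delta_k \eps_k$ on the appropriate sets, yet $|u_k - P_c^-| > \eps_k/2$ somewhere on $\partial B_1 \cap Q$. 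Normalize by setting $w_k := (u_k - P_c^-)/\eps_k$. Then $|w_k| \le |x|^\beta$ on $(B_{1/\rho_k}\setminus B_{\rho_k})\cap Q$, and since $u_k = \varphi_k$ on $\partial Q$ while $P_c^- = q$ there, we get $|w_k| \le \delta_k$ on $(B_{1/\rho_k}\setminus B_{\rho_k})\cap \partial Q$, so in the limit $w_k \to w$ with $w = 0$ on the two axes (away from the origin).

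The second step is to derive the limiting equation for $w$. Since $\det D^2 u_k = f_k$ and $\det D^2 P_c^- = c$, subtracting and using the concavity/linearization of $\det^{1/2}$ (or just $\det D^2 u_k - \det D^2 P_c^- = a_k^{ij}(w_k)_{ij}\eps_k$ where $a_k^{ij}$ is the cofactor matrix of an average of $D^2 u_k$ and $D^2 P_c^-$) gives $a_k^{ij}(w_k)_{ij} = (f_k - c)/\eps_k$, with right-hand side bounded by $\delta_k \to 0$. The uniform ellipticity from \eqref{uC2}–\eqref{uC3} (applied to $u_k$, which is a genuine solution of \eqref{ueq}–\eqref{ubdr} up to the small perturbation — here I would need the interior estimates of \cite{S1} to pass to $C^{2,\alpha}_{loc}$ bounds on $w_k$ away from $0$) lets me extract a locally uniformly convergent subsequence $w_k \to w$ in $C^2_{loc}(\bar Q \setminus \{0\})$, with the coefficients $a_k^{ij} \to a^{ij} = $ cofactor matrix of $D^2 P_c^-$, which is the constant matrix $\begin{pmatrix} 1 & \sqrt{1-c} \\ \sqrt{1-c} & 1\end{pmatrix}$. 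So $w$ solves the constant-coefficient elliptic equation $\mathrm{tr}(A\, D^2 w) = 0$ in $Q$ with $A = (a^{ij})$, $w = 0$ on $\partial Q$, and the growth bound $|w| \le |x|^\beta$ in all of $Q$.

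The third step is the Liouville-type classification: a solution of $\mathrm{tr}(A D^2 w)=0$ in the quadrant vanishing on both axes and growing slower than $|x|^\beta$ must be identically zero, provided $\beta < \beta_c^-$... wait — here it is crucial that $\beta$ is \emph{above} $\beta_c^-$ — so I need to recompute the homogeneous solutions. After the linear change of variables that diagonalizes $A$ and straightens $Q$ into a wedge, the relevant separated solutions are $\mathrm{Im}\, z^{\gamma}$-type functions homogeneous of degree $\gamma$, where the admissible exponents $\gamma$ form an arithmetic progression with first term $\beta_c^-$ (the angle of the transformed wedge being $\arccos(-\sqrt{1-c})$, so the first Dirichlet eigenvalue exponent is exactly $\pi/\arccos(-\sqrt{1-c}) = \beta_c^-$). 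Since $w$ vanishes on the boundary and grows slower than $|x|^\beta$ with $\beta \le 2 < 2\beta_c^-$ possibly — actually since $\beta_c^- > 1$ we have $\beta_c^- < \beta \le 2$, and the next exponent $2\beta_c^- - \text{something}$... I need $\beta$ strictly between $\beta_c^-$ and the second admissible exponent, so that the only homogeneous solution with growth $o(|x|^\beta)$ and the right boundary data is the zero one plus possibly the $\beta_c^-$-homogeneous mode; ruling the latter out uses that $w$ is also controlled \emph{near} $0$ by $|x|^\beta$ with $\beta > \beta_c^-$, killing that mode too. Hence $w \equiv 0$, contradicting $|w_k| > 1/2$ on $\partial B_1 \cap Q$ in the limit.

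The main obstacle is the Liouville classification in step three: one must carefully identify the full set of homogeneous harmonic-type solutions in the transformed wedge of opening $\arccos(-\sqrt{1-c}) \in (\pi/2, \pi)$, check that $\beta \in (\beta_c^-, 2]$ indeed sits in the gap below the second exponent (this is where the hypothesis $\beta \le 2$ and the specific value of $\beta_c^-$ enter), and handle the behavior at both endpoints $0$ and $\infty$ simultaneously — the two-sided growth bound is what forces uniqueness. A secondary technical point is justifying the compactness of $w_k$ uniformly up to the boundary $\partial Q \setminus \{0\}$ and near the origin, for which the rescaling-invariant estimates \eqref{uC2}–\eqref{uC3} and boundary regularity from \cite{S1} are the right tools.
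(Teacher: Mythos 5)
Your overall strategy---normalize $w_k=(u_k-P_c^-)/\eps_k$, pass to a limit solving the constant-coefficient linearization $\mathrm{tr}(A\,D^2w)=0$ with zero boundary data and $|w|\le|x|^\beta$, then invoke a Liouville classification in the wedge that uses both $\beta>\beta_c^-$ (to kill the degree-$\beta_c^-$ mode via the bound near $0$) and $\beta\le 2<2\beta_c^-$ (to kill higher modes at infinity)---is in essence the approach the paper takes. After the affine change of variables the paper's Lemmas \ref{appro_lem-} and \ref{appro_lem2} package exactly these two steps, so your decomposition is the same even if you state it as a single contradiction argument rather than two lemmas. The Liouville step is correct: once $A$ is diagonalized the wedge becomes $Q_c^-$, the homogeneous Dirichlet modes have degrees $k\beta_c^-$, and your two-sided growth bound in $(\beta_c^-,2\beta_c^-)$ does force $w\equiv 0$.

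There is, however, a genuine gap in the compactness step, and it is the one technical point you cannot hand-wave. You claim $w_k\to w$ in $C^2_{\mathrm{loc}}(\bar Q\setminus\{0\})$ and $a^{ij}_k\to\mathrm{cof}(D^2P_c^-)$, which amounts to $D^2u_k\to D^2P_c^-$ locally uniformly. But the hypothesis on $f$ in Proposition \ref{C2P-} is only $|f-c|\le\delta\eps$: $f$ is merely bounded, with no modulus of continuity. With bounded $f$ one only has Caffarelli's $C^{1,\alpha}$ and $W^{2,p}$ interior estimates for $u_k$, not $C^{2,\alpha}$, so you cannot conclude that $D^2u_k$ converges uniformly, nor that $w_k$ has any $C^2$ compactness; the crude bound $|D^2w_k|\le C/\eps_k$ coming from \eqref{uC2} diverges. (Similarly, the estimates \eqref{uC2}--\eqref{uC3} and the cited results from \cite{S1} apply to solutions with H\"older right-hand side, which is not available here.) The paper circumvents this precisely via the auxiliary-solution device in Lemma \ref{appro_lem}: on each small interior ball one compares $u_k$ with the solution $u_0$ of $\det D^2u_0=1$ having the same boundary values. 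The super/subadditivity of $(\det)^{1/2}$ gives $|u_k-u_0|\le 4r^2\delta\eps_k$, so $(u_k-q)/\eps_k$ and $(u_0-q)/\eps_k$ have the same limit; and because $u_0$ solves a Monge--Amp\`ere equation with constant right-hand side it \emph{does} have uniform interior $C^3$ estimates, whence $D^2u_0\to D^2q$ uniformly on compacts and the linearized coefficients converge to the identity. You must insert this (or an equivalent viscosity-stability argument) to legitimately pass to the limit; without it the claim that $w$ solves $\mathrm{tr}(A\,D^2w)=0$ is unproven.

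One smaller remark: you do not need $C^2$ convergence of $w_k$ even with the fix---uniform convergence of $w_k$ on compacts (which does follow from the barrier estimate \eqref{300} and Arzel\`a--Ascoli via the $C^{1,\alpha}$ theory) plus the fact that the limit is a viscosity solution of a constant-coefficient, hence automatically smooth, elliptic equation is enough to run the Liouville step and reach the contradiction on $\p B_1\cap Q$.
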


A consequence of this result is that if $u$ is quadratically close to $P_c^-$ at all scales less than $1$, i.e.,
$$|u(x)-P_c^-(x)| \le \eps_0|x|^2, |f(x)-c| \le \delta \eps_0 |x|^\alpha  \mbox{ in $Q \cap B_1$}, \text{ and } |\varphi(x) -q(x)| \le \delta \eps_0 |x|^{2+\alpha} \mbox{ on $\p Q \cap B_1$},$$  for some 
$\eps_0,\delta, \alpha\in (0,1)$ small depending only on $c$,
then $$|u(x)-P_c^-(x)| \le C \eps_0 |x|^{2+\alpha}$$ near the origin; see Lemma \ref{Pc-cor}.

\subsection{Transformed domains $Q_c^{\pm}$ and reformulations of Propositions \ref{C2P+} and \ref{C2P-}}
\label{Qcpm}
We use affine transformations to transform $P_c^{\pm}$ into the quadratic function 
$q(x)=\frac{|x|^2}{2}$ on appropriate angular domains $Q_c^{\pm}$ in the plane. Then the linearized operator of $\det D^2 u$ around $q$ is the Laplace operator. 
We assume that $u$  satisfies \eqref{DP2} and the hypotheses of either Proposition \ref{C2P+}  or Proposition \ref{C2P-}.
We start with the affine transformations from $\R^2$ to $\R^2$ given by the matrices
\begin{equation*}
A_c^{\pm}:=
\begin{pmatrix} 1 & \mp \frac{\sqrt{1-c}}{\sqrt{c}}\\ 0 &\frac{1}{\sqrt{c}} \end{pmatrix}
\quad\text{and } (A_c^{\pm})^{-1}=
\begin{pmatrix} 1 & \pm \sqrt{1-c}\\ 0 & \sqrt{c} \end{pmatrix},
\end{equation*}
and denote 
$$Q_c^{\pm}= (A_c^{\pm})^{-1}Q,~u_c^{\pm}= u \circ A_c^{\pm}, ~\text{and } q_c^{\pm}= q \circ A_c^{\pm}.$$
Then $$P_c^{\pm} \circ A_c^{\pm}(x)= q(x) =\frac{|x|^2}{2} \quad \text{on } Q_c^{\pm}.$$
Note that
$$\det D^2 u_c^{\pm}=\frac{1}{c} f\circ A_c^{\pm}~\text{and } |\det D^2 u_c^{\pm} -1| = \frac{|f-c|}{c}\leq \frac{\ep\delta}{c},$$
and
$$q_c^{\pm}(x)=P_c^\pm\circ A_c^{\pm}(x)=q(x)=\frac{|x|^2}{2}\quad \text{on }\p Q_c^{\pm}.$$

We restate equivalent versions of Proposition \ref{C2P+} and Proposition \ref{C2P-} on the transformed domains $Q_c^{\pm}$ as follows.
\begin{propo}
\label{C2P+2}
Suppose that $|\det D^2 u-1|\leq \delta \ep$, $|u-q|\leq\ep\leq \ep_0$ in $B_1\cap Q_c^{+}$ where $0<\ep_0(c),\delta(c)\leq \frac{1}{16}$ are sufficiently small and 
 $u$ has the boundary value $\varphi$  on the edges of $Q_c^{\pm}$ that satisfies
$|\varphi -q|\leq \delta \ep$ on $B_1\cap\p Q_c^+$.
 Then there exist $\alpha\in (0, 1)$ and  $r\in (0,\frac{1}{2})$ depending only on $c$ such that
$$|u-q|\leq \ep r^{2+\alpha}~\text{in } B_r\cap Q_c^{+}.$$
\end{propo}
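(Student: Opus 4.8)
\textbf{Proof plan for Proposition \ref{C2P+2}.}
The plan is to run a perturbation/compactness argument around the harmonic profile. On the domain $Q_c^+$, which is an angular sector with opening angle $\theta_c^+ = \arccos(\sqrt{1-c}) < \pi/2$, the function $u-q$ is small in $L^\infty$, solves an equation of the form $\det D^2 u = 1 + O(\delta\eps)$ with boundary value $\varphi-q = O(\delta\eps)$ on the two edges of the sector, and by the global $C^2$ and $C^3$ estimates from \eqref{uC2}--\eqref{uC3} (applied to the rescalings, and transferred to $Q_c^+$ via the fixed affine map $A_c^+$) we have uniform interior-in-angle control away from the vertex. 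First I would show that as $\eps, \delta \to 0$, the normalized differences $(u-q)/\eps$ converge, locally uniformly on $\overline{Q_c^+}\setminus\{0\}$, to a limit $w$ which is harmonic in $Q_c^+$, vanishes on $\partial Q_c^+\setminus\{0\}$, and satisfies the growth bound $|w|\le 1$ in $B_1 \cap Q_c^+$. The linearization is exactly the Laplacian because $A_c^+$ was chosen precisely so that $D^2 q = I$ and hence the linearized coefficient matrix of $\det D^2(\cdot)$ at $q$ is the cofactor matrix of $I$, namely $I$ itself.

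The next step is the key separation-of-variables computation: a bounded harmonic function in the sector $\{0<\vartheta<\theta_c^+\}$ vanishing on both edges must, by expansion in the basis $\{r^{k\pi/\theta_c^+}\sin(k\pi\vartheta/\theta_c^+)\}_{k\ge 1}$, satisfy $|w(x)| \le C_* |x|^{\pi/\theta_c^+}$ near the origin, with $\pi/\theta_c^+ > 2$ precisely because $\theta_c^+ < \pi/2$. Set $\alpha_0 := \pi/\theta_c^+ - 2 > 0$; we get $|w(x)| \le C_* |x|^{2+\alpha_0}$ in, say, $B_{1/2}\cap Q_c^+$. Fix $\alpha \in (0,\alpha_0)$ and choose $r\le 1/2$ small enough, depending only on $c$, that $C_* r^{\alpha_0} \le \tfrac14 r^{\alpha}$, so $|w| \le \tfrac14 \eps r^{2+\alpha}/\eps = \tfrac14 r^{2+\alpha}$ on $B_r\cap Q_c^+$ in the limit. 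Then a standard contradiction argument upgrades this to the claimed estimate for the actual solution: if no $\eps_0,\delta$ worked, take a sequence $u_m$ with $\eps_m\to 0$ (or $\eps_m$ bounded below but $\delta_m\to 0$) violating $|u_m-q|\le \eps_m r^{2+\alpha}$ in $B_r\cap Q_c^+$, pass to the harmonic limit $w$ as above, and reach a contradiction with $|w|\le \tfrac14 r^{2+\alpha}$, after absorbing the $O(\delta\eps)$ terms (which are lower order since $\delta$ can be taken as small as we like relative to the gain $r^{2+\alpha}$). One must be slightly careful that in the regime where $\eps_m$ does not go to zero the nonlinear error $\det D^2 u_m - 1 - \Delta(u_m-q)$ is quadratic in $D^2(u_m-q)$, but by \eqref{uC2} the second derivatives are uniformly bounded, so this error is still controlled and the limit equation is still $\Delta w = 0$; alternatively, one reduces to $\eps_m\to 0$ by an additional dilation as in the discussion following Corollary \ref{Pc2a}.

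The main obstacle I expect is the compactness/convergence step on a domain with a corner: one needs uniform estimates for $u-q$ up to the two edges of $Q_c^+$ (away from the vertex) to guarantee that the limit $w$ genuinely vanishes on $\partial Q_c^+\setminus\{0\}$ and is not merely harmonic in the interior. This is supplied by the boundary regularity theory of \cite{S1} together with the quadratic separation of $u$ from its tangent planes on $\partial Q$, pulled back through the fixed affine map $A_c^+$; the edges of $Q_c^+$ are straight, so there is no loss. A secondary technical point is ensuring all constants ($C_*$, the admissible range of $\alpha$, the final $r$, and the smallness thresholds $\eps_0(c), \delta(c)\le \tfrac1{16}$) depend only on $c$ through the fixed angle $\theta_c^+$ and the ellipticity constants $c_0(c), C_0(c)$ of \eqref{uC2}; this is routine once the scheme is set up. Finally, transferring back to Proposition \ref{C2P+} is immediate since $A_c^+$ and its inverse have norms depending only on $c$, so smallness of $|u-P_c^+|$, $|f-c|$, $|\varphi-q|$ on $B_1\cap Q$ is equivalent, up to $c$-dependent constants, to the corresponding smallness on $B_1\cap Q_c^+$, and $B_r\cap Q_c^+$ contains a ball $B_{cr}\cap Q$ pushed forward.
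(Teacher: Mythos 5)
Your proposal follows the same overall route as the paper: compactness to approximate $(u-q)/\eps$ by a harmonic function $w$ on $Q_c^+$ vanishing on the boundary, then the conformal map $z\mapsto z^{\beta_c^+}$ (equivalently, separation of variables) to obtain the decay $|w(x)|\leq C|x|^{\beta_c^+}$ with $\beta_c^+=\pi/\alpha_c^+>2$ because the opening angle $\alpha_c^+$ is acute, and finally a choice of small $r$.

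The one genuine gap is in the compactness/convergence step at the corner. You correctly flag this as the main obstacle, but your proposed fix, the boundary $C^{2,\alpha}$ theory of \cite{S1} applied to the edges, only gives locally uniform convergence on $\overline{Q_c^+}\setminus\{0\}$. That is not enough to close the contradiction argument: the hypothetical sequence of bad points $x_m\in B_r\cap Q_c^+$ with $|u_m(x_m)-q(x_m)|>\eps_m r^{2+\alpha}$ may tend to the vertex, where no convergence is asserted and the harmonic bound on $w$ does not directly transfer. What is actually needed is a quantitative a priori bound on $|u-q|$ that is uniform up to the vertex. The paper proves this as estimate \eqref{300}, $|u(x)-q(x)|\leq C\eps\,\dist(x,\partial Q_c^+)+\delta\eps$, via simple explicit super- and subsolutions of the form $q\pm\bigl[\delta\eps+4\eps((x_1-a)^2-2x_2^2)+C\eps x_2\bigr]$; this gives uniform smallness of $|u-q|$ both along the two edges and at the corner, and is in fact also all one needs on the edges, so \cite{S1} can be dispensed with entirely. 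A couple of minor slips: since the hypotheses force $\eps\leq\eps_0$, any contradiction sequence automatically has $\eps_m\to 0$, so the alternative regime you mention does not occur; and since $\beta_c^+-2$ can exceed $1$, you should fix $\alpha\in(0,\min\{1,\beta_c^+-2\})$ to ensure $\alpha\in(0,1)$ as claimed.
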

\begin{propo}
\label{C2P-2}
Let $\beta\in (\beta_c^-,2]$.
Suppose that $|\det D^2 u-1|\leq  \delta \ep $, $|u(x)-q(x)|\leq\ep |x|^\beta$ in $Q_c^-\cap (B_{1/\rho}\setminus B_\rho)$ where $0<\ep\leq \ep_0(c,\beta),\delta(c,\beta), \rho(c,\beta)\leq \frac{1}{16}$ are sufficiently small and 
 $u$ has the boundary value $\varphi$  on the edges of $Q_c^{-}$ that satisfies
 $|\varphi -q|\leq \delta \ep$ on $ (B_{1/\rho}\setminus B_\rho)\cap\p Q_c^-$.
 Then 
$$|u-q|\leq \frac{\ep}{2}~\text{on } \p B_1\cap Q_c^{-}.$$
\end{propo}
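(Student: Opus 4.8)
The plan is to prove the ``improvement of flatness'' estimate by a compactness/contradiction argument anchored at the fact that on $Q_c^-$ the linearization of $\det D^2 u$ around the quadratic $q$ is exactly the Laplacian, so the relevant model is a harmonic function on the angular sector $Q_c^- = (A_c^-)^{-1}Q$. The opening angle $\theta_c$ of this sector satisfies $\cos\theta_c = -\sqrt{1-c}$ (this is where the exponent $\beta_c^-=\pi/\arccos(-\sqrt{1-c})$ comes from: harmonic functions vanishing on the two edges of a sector of angle $\theta$ grow like $|x|^{k\pi/\theta}$). First I would suppose the conclusion fails: there are sequences $\ep_j\to 0$, $\delta_j,\rho_j\to 0$, solutions $u_j$ on $Q_c^-\cap(B_{1/\rho_j}\setminus B_{\rho_j})$ with $|\det D^2 u_j-1|\le \delta_j\ep_j$, boundary data $|\varphi_j-q|\le\delta_j\ep_j$, and $|u_j-q|\le\ep_j|x|^\beta$ on the annular region, but $\sup_{\p B_1\cap Q_c^-}|u_j-q|>\ep_j/2$. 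Set $w_j:=(u_j-q)/\ep_j$, so $|w_j|\le |x|^\beta$ on $Q_c^-\cap(B_{1/\rho_j}\setminus B_{\rho_j})$, $w_j=O(\delta_j)$ on the boundary edges, and $\sup_{\p B_1\cap Q_c^-}|w_j|>1/2$.

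The second step is to establish interior and boundary $C^{1,\alpha}$ (indeed $C^{2,\alpha}$) compactness for the $w_j$ on compact subsets of $\ov{Q_c^-}\setminus\{0\}$. Here I would use the a priori estimates already available: by (\ref{uC2}) (transported through the fixed affine map $A_c^-$) the operator $\det D^2 u_j$ is uniformly elliptic with the convexity pinched between fixed multiples of the identity, so $w_j$ solves a uniformly elliptic linear equation $a_{kl}^j \p_{kl} w_j = (\det D^2 u_j - 1)/\ep_j = O(\delta_j)$ with $a^j$ bounded and Hölder (using (\ref{uC3}) for the modulus of continuity), and with boundary data tending to $0$. Schauder / Krylov–Safonov boundary estimates give uniform $C^{1,\alpha}$ bounds on, say, $(B_2\setminus B_{1/2})\cap \ov{Q_c^-}$, and after passing to a subsequence $w_j\to w$ locally uniformly, where $w$ is harmonic on $Q_c^-$ (the coefficients $a^j$ converge to $\delta_{kl}$ since $D^2 u_j\to D^2 q = I$ after the transformation, because $u_j\to q$ in $C^2_{loc}$), $w$ vanishes on $\p Q_c^-\setminus\{0\}$, satisfies the growth bound $|w(x)|\le |x|^\beta$ on all of $Q_c^-$ (letting $\rho_j\to 0$ and $1/\rho_j\to\infty$), and has $\sup_{\p B_1\cap Q_c^-}|w|\ge 1/2$.

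The third step — and I expect this to be the crux — is the Liouville-type classification: a harmonic function $w$ on the sector $Q_c^-$ vanishing on both edges and satisfying $|w(x)|\le C|x|^\beta$ with $\beta<\beta_c^-$ must be identically zero. Expanding $w$ in the Fourier basis $\sum_k c_k r^{k\pi/\theta_c}\sin(k\pi\vartheta/\theta_c)$ adapted to the sector (valid because $w$ is harmonic, vanishes on the edges, and is controlled near $0$ and $\infty$), every admissible homogeneity $k\pi/\theta_c$ is $\ge \pi/\theta_c = \beta_c^- > \beta$; the growth bound $|w|\le C|x|^\beta$ at infinity kills all terms with $k\ge 1$, and the same bound near the origin (or just harmonicity plus the edge condition) kills any constant, so $w\equiv 0$, contradicting $\sup_{\p B_1\cap Q_c^-}|w|\ge 1/2$. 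Care is needed to justify the eigenfunction expansion on an unbounded sector with only polynomial two-sided control rather than decay — I would handle this by separately bounding the ``low modes'' using the growth at infinity and the ``would-be negative modes'' using regularity at a finite scale, i.e. a Phragmén–Lindelöf argument in the sector. This contradiction proves the proposition, and the analogous (simpler, since no growth competition at infinity is involved) argument for Proposition \ref{C2P+2} uses the sector $Q_c^+$ whose angle is $< \pi/2$, so that the smallest homogeneity $\pi/\theta_c^+ > 2$ gives the genuine gain $r^{2+\alpha}$ with $\alpha = \pi/\theta_c^+ - 2 > 0$.
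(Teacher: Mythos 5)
Your architecture matches the paper's — approximate $(u-q)/\ep$ by a harmonic function on $Q_c^-$ vanishing on the edges, then argue that the growth bound forces it to be small on $\p B_1$ by a compactness/Liouville argument. The paper does this in two steps (Lemma \ref{appro_lem-} for the approximation, Lemma \ref{appro_lem2} for the Liouville-type step) whereas you do one combined contradiction argument; those are equivalent.

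However, there is a genuine error in your Liouville step, and it would cause the argument to fail if carried out as written. You claim the classification holds for $|w| \le C|x|^\beta$ with "$\beta < \beta_c^-$" and write "every admissible homogeneity $k\pi/\theta_c \ge \beta_c^- > \beta$; the growth bound $|w| \le C|x|^\beta$ at infinity kills all terms with $k \ge 1$." The hypothesis of the proposition is $\beta \in (\beta_c^-, 2]$, i.e.\ $\beta > \beta_c^-$. In that regime the growth bound at infinity does \emph{not} eliminate the first mode $v_0 = r^{\beta_c^-}\sin(\beta_c^- \theta)$, since $|v_0| \sim |x|^{\beta_c^-} \le |x|^\beta$ for $|x| \ge 1$. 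The correct bookkeeping is the reverse: the bound $|w| \le |x|^\beta$ \emph{near the origin} kills the $k=1$ mode (as $\beta > \beta_c^-$ gives $|x|^\beta < |x|^{\beta_c^-}$ for small $|x|$) and also all negative modes $r^{-k\beta_c^-}$; the bound \emph{at infinity} kills the modes $k \ge 2$, using $\beta < 2\beta_c^-$ (which holds because $\beta \le 2 \le 2\beta_c^-$). If instead you had $\beta < \beta_c^-$ as you wrote, the estimate would be trivially false for a nonzero multiple of $v_0$ multiplied by the scaling needed to make it small on $\p B_1$ — wait, no, your version of the Liouville statement is in fact true, but it is simply not the statement that applies to the proposition, and the reasoning you gave for which end of the annulus eliminates which modes is inverted. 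The paper sidesteps the eigenfunction-expansion delicacies you flag by conformally mapping $Q_c^-$ to $\H$, reflecting the limit $v$ odd across $\p\H$, and applying the Liouville theorem for entire harmonic functions of subquadratic growth ($\alpha_0 = \beta/\beta_c^- < 2$ gives $v = \pm Cx_2$, and then $\alpha_0 > 1$ applied near the origin forces $C = 0$). One further caution: in step 2 you invoke \eqref{uC2}–\eqref{uC3}, but those are for global solutions of \eqref{ueq}–\eqref{ubdr} on all of $Q$; here $u$ is only a perturbation on an annulus, so you cannot cite those directly. The paper instead uses explicit barriers to control $u-q$ near the edges and compares $u$ to the exact Monge–Ampère solution $u_0$ with the same boundary values in interior balls to obtain the uniform approximation of $(u-q)/\eps$ by a harmonic function.
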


To prove these propositions, we show that the ratio $\frac{u-q}{\ep}$ is well approximated by a harmonic function on $Q_c^{\pm}$ which vanishes on the boundary.
The approximation results state as follows.
\begin{lemma}
\label{appro_lem}
Assume that $u$ satisfies the hypotheses of Proposition \ref{C2P+2}.
Then, for any small $\eta>0$, we can find a solution $w$ to
\begin{equation}
\label{w_har+}
\Delta w =0 \text{ in } Q_c^{+},~w=0~\text{ on } \p Q_c^{+} 
\end{equation}
such that $|w|\leq 1$ in $B_{1/2}\cap\overline{Q_c^{+}}$ and
$$|u-q-\ep w| \leq \ep\eta~\text{ in } B_{1/2}\cap\overline{Q_c^{+}}$$
provided that $\ep_0(\eta, c)$ and $\delta (\eta, c)$ are chosen sufficiently small, now depending also on $\eta$.
\end{lemma}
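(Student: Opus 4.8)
\textbf{Proof proposal for Lemma \ref{appro_lem}.}

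The plan is to run a standard compactness/contradiction argument. Suppose the conclusion fails: then there exists $\eta>0$ and a sequence of convex functions $u_j$ solving \eqref{DP2} on $Q_c^+$ with $|\det D^2 u_j - 1|\le \frac1j$ in $B_1\cap Q_c^+$, $|u_j - q|\le \eps_j$ in $B_1\cap Q_c^+$ with $\eps_j\to 0$, and $|\varphi_j - q|\le \frac1j \eps_j$ on $B_1\cap\p Q_c^+$, but for which no harmonic $w$ vanishing on $\p Q_c^+$ with $|w|\le 1$ on $B_{1/2}\cap\ov{Q_c^+}$ satisfies $|u_j - q - \eps_j w|\le \eps_j\eta$ there. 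First I would renormalize: set $v_j:=(u_j - q)/\eps_j$, so $|v_j|\le 1$ in $B_1\cap Q_c^+$ and $|v_j|\le \frac1j$ on $B_1\cap\p Q_c^+$. The key point is that $v_j$ solves a linear uniformly elliptic equation with no zero-order terms: writing $M^{ij}_j:=\int_0^1 \mathrm{cof}\,D^2\big(q + t(u_j - q)\big)^{ij}\,dt$, the identity $\det D^2 u_j - \det D^2 q = M^{ij}_j\,\p_{ij}(u_j - q)$ gives $M^{ij}_j\,\p_{ij} v_j = \eps_j^{-1}(\det D^2 u_j - 1)$, whose right-hand side is bounded by $\eps_j^{-1} j^{-1}$; I should arrange the contradiction sequence so that $\eps_j^{-1}j^{-1}\to 0$ (replace $\frac1j$ by a fast enough $\delta_j\to 0$, which is legitimate since the claim is "for $\delta$ small"). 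Crucially, by the interior estimates \eqref{uC2}--\eqref{uC3} applied to $u_j$ on $B_1\cap Q_c^+$ — which hold because $u_j$ is a small perturbation of $q$, hence $q/2 \le $ something and $D^2 u_j$ is pinched near $I$ — the coefficients $M^{ij}_j$ are uniformly elliptic and $C^{1}_{loc}$-bounded away from the corner, and on $B_1\cap Q_c^+$ they converge (up to a subsequence, in $C^0_{loc}(\ov{Q_c^+}\setminus\{0\})$) to the constant matrix $\mathrm{cof}\,D^2 q = I$, since $D^2 u_j\to D^2 q = I$.

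Next I would extract a limit of the $v_j$. The interior Schauder/Krylov-Safonov estimates for the linear equation $M^{ij}_j\,\p_{ij}v_j = g_j$ with $\|g_j\|\to 0$, combined with boundary regularity on the flat edges of $Q_c^+$ where $|v_j|\le\delta_j\to 0$, give uniform $C^{1,\gamma}$ (indeed better) bounds for $v_j$ on compact subsets of $\ov{Q_c^+}\setminus\{0\}$; note the edges of $Q_c^+$ are straight segments meeting at the origin, so boundary estimates away from the corner are classical. Passing to a subsequence, $v_j\to v_\infty$ in $C^0$ on compact subsets of $\ov{Q_c^+}\setminus\{0\}$, with $|v_\infty|\le 1$, $\Delta v_\infty = 0$ in $Q_c^+$, and $v_\infty = 0$ on $\p Q_c^+\setminus\{0\}$. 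The only subtlety is behavior at the corner: but $|v_\infty|\le 1$ is a pointwise bound valid up to the origin, so $v_\infty$ is a bounded harmonic function on the sector vanishing on the two edges, hence extends harmonically across $0$ (removable singularity for bounded harmonic functions), giving a genuine solution $w:=v_\infty$ of \eqref{w_har+} with $|w|\le 1$ on $B_{1/2}\cap\ov{Q_c^+}$. Then $|v_j - w|\to 0$ uniformly on $B_{1/2}\cap\ov{Q_c^+}$ — again using the uniform bound near the corner to handle a small neighborhood of $0$ — i.e.\ $|u_j - q - \eps_j w|\le \eps_j\eta$ for $j$ large, contradicting the choice of the sequence.

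I expect the main obstacle to be the uniform control of $v_j$ (equivalently of $u_j-q$) in a neighborhood of the corner of $Q_c^+$, where no classical boundary Schauder estimate applies: there the only tool is the crude bound $|v_j|\le 1$, so I must ensure the compactness argument only needs $C^0$ convergence there, and that the removable-singularity step legitimately produces a harmonic function on the full sector. A secondary technical point is verifying that $q+t(u_j-q)$ has Hessian close to $I$ uniformly in $t\in[0,1]$ and $x\in B_1\cap Q_c^+$ — this follows from $|u_j - q|\le\eps_j$ together with the a priori Pogorelov-type/interior estimates \eqref{uC2} guaranteeing $D^2 u_j$ is bounded and bounded below, so by interpolation $D^2 u_j\to I$ in $C^0_{loc}$; one then just needs $\eps_0(c)$ small enough that the whole family stays in the uniform ellipticity regime where \eqref{uC2}--\eqref{uC3} are available. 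Everything else is the routine normalization of the linearized operator and the standard "harmonic approximation" packaging.
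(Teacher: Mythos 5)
Your overall strategy — contradiction/compactness, normalization $v_j=(u_j-q)/\eps_j$, linearization to $M^{ij}_j\partial_{ij}v_j=g_j$ with $g_j\to 0$, and passage to a harmonic limit — is a legitimate alternative packaging, and the removable-singularity observation at the corner is correct. However, there is a genuine gap at the very end of the argument, and it is exactly the point the paper's proof is structured to handle.

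The gap: you claim $|v_j-w|\to 0$ uniformly on all of $B_{1/2}\cap\overline{Q_c^+}$, saying the corner is handled ``using the uniform bound near the corner.'' But the only uniform bound you have at your disposal near $0$ is $|v_j|\le 1$ and $|w|\le 1$, which gives $|v_j-w|\le 2$ there — not $\le\eta$. Compactness only gives you $C^0$ convergence on compact subsets of $\overline{Q_c^+}\setminus\{0\}$, so in a small neighborhood $B_\rho$ of the corner you have no control. To close the argument you must show that both $v_j$ and $w$ are themselves \emph{small} near $\p Q_c^+$ in a neighborhood of the corner, and this requires a quantitative barrier estimate. The paper does precisely this as its opening step, proving $|u-q|\le C\eps\,\dist(x,\p Q_c^+)+\delta\eps$ (their \eqref{300}) with explicit Monge--Amp\`ere barriers $q\pm\big(\delta\eps+4\eps[(x_1-a)^2-2x_2^2]+C\eps x_2\big)$; this immediately forces $|v_\eps(x)|\le C|x|+\delta$ near $0$, and the harmonic limit $w$ inherits the same decay. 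With that in hand, one chooses $\rho$ so small that both $|v_j|$ and $|w|$ are below $\eta/2$ in $B_\rho\cap\overline{Q_c^+}$, and uses the uniform convergence only on the compact set $(\overline{B_{1/2}}\setminus B_\rho)\cap\overline{Q_c^+}$. Your proof omits the barrier step entirely and therefore does not actually produce the final inequality. (The barrier could also be run at the level of the linearized equation $M^{ij}_j\partial_{ij}v_j=g_j$, using uniform ellipticity and a super/subsolution of the form $\pm(\delta_j+Ax_2-Bx_2^2)$, but it must be carried out.)

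A secondary issue: the interior convergence in the paper is obtained by a soft comparison with the solution $u_0$ of $\det D^2 u_0=1$ on interior balls with $u_0=u$ on the sphere, needing only the comparison principle and the elementary determinant inequality; no Schauder machinery or uniform $C^{2,\alpha}$ control of the coefficients is invoked. Your route instead appeals to interior and boundary Schauder for $M^{ij}_j\partial_{ij}v_j=g_j$, which requires uniform H\"older control of $D^2u_j$ up to the flat edges of $Q_c^+\setminus\{0\}$. The estimates \eqref{uC2}--\eqref{uC3} you cite are derived in Section \ref{Compactness} for \emph{global exact} solutions of \eqref{ueq}--\eqref{ubdr}; in the local approximate setting of Proposition \ref{C2P+2} you would need to re-derive them (the right ingredient is the quadratic separation of $u_j$ from its tangent planes on $\p Q_c^+$, which follows from $|\varphi_j-q|\le\delta_j\eps_j$, plus \cite[Theorem 6.4]{S1}). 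That can be done, but it is extra work that the paper's interior comparison step deliberately avoids. So even after patching the corner gap, the paper's route is the more economical one.
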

\begin{lemma}
\label{appro_lem-}
Assume that $u$ satisfies the hypotheses of Proposition \ref{C2P-2}.
Then, for any small $\eta>0$, we can find a solution $w$ to
\begin{equation}
\label{w_har-}
\Delta w =0 \text{ in } Q_c^{-},~w=0~\text{ on } \p Q_c^{-} 
\end{equation}
such that $|w|\leq |x|^\beta$ in $(B_{1/(2\rho)}\setminus B_{2\rho})\cap\overline{Q_c^{-}}$ and
$$|u-q-\ep w| \leq \ep\eta~\text{ in } (B_{1/(2\rho)}\setminus B_{2\rho})\cap\overline{Q_c^{-}}$$
provided that $\ep_0(\eta, c)$ and $\delta(\eta, c)$ are chosen sufficiently small, now depending also on $\eta$.
\end{lemma}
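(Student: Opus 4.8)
The statement to prove is Lemma \ref{appro_lem-}, the approximation lemma underlying Proposition \ref{C2P-2}.

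The plan is to run a standard compactness--contradiction argument on the annular region. Suppose the conclusion fails. Then there exist $\eta>0$ and a sequence of solutions $u_k$ on $Q_c^-$ with $|\det D^2 u_k - 1|\le \delta_k\eps_k$, $|u_k-q|\le \eps_k|x|^\beta$ on $(B_{1/\rho_k}\setminus B_{\rho_k})\cap Q_c^-$, and boundary data $\varphi_k$ with $|\varphi_k-q|\le \delta_k\eps_k$ on $(B_{1/\rho_k}\setminus B_{\rho_k})\cap\p Q_c^-$, where $\delta_k,\rho_k\to 0$, but such that $u_k-q$ cannot be $\eps_k\eta$-approximated in $(B_{1/(2\rho_k)}\setminus B_{2\rho_k})\cap\overline{Q_c^-}$ by any bounded harmonic function vanishing on $\p Q_c^-$ with $|w|\le|x|^\beta$ on that annulus. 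Set $\tilde u_k := (u_k-q)/\eps_k$; this is the normalized difference, bounded by $|x|^\beta$ on the growing annulus. First I would establish interior and boundary $C^{2,\alpha}$ (indeed $C^3$) estimates for $u_k$ away from the origin: since $\det D^2 u_k$ is close to $1$ and $u_k$ is close to $q$, the $u_k$ have uniformly bounded second derivatives on each fixed compact subset of $\overline{Q_c^-}\setminus\{0\}$ (using the same scheme as \eqref{uC2}: interior estimates from \cite{S1} after localizing, plus the boundary separation from $q$), and the linearized operators $a_k^{ij}\p_{ij}$, with $a_k^{ij}=\text{cof}(D^2u_k)^{ij}$, are uniformly elliptic and converge to $\Delta$.

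The second step is to extract a limit. Along a subsequence, $u_k\to q$ in $C^2_{loc}(\overline{Q_c^-}\setminus\{0\})$, and I claim $\tilde u_k$ converges in $C^2_{loc}$ to some $w$. This follows because $\tilde u_k$ solves a linear uniformly elliptic equation: expanding $\det D^2 u_k - \det D^2 q = \int_0^1 \text{cof}(D^2 q + t(D^2 u_k - D^2 q))^{ij}\,dt\,\cdot(u_k-q)_{ij}$, we get $b_k^{ij}\p_{ij}\tilde u_k = (\det D^2 u_k - 1)/\eps_k$, with right side bounded by $\delta_k\to 0$ and $b_k^{ij}\to\delta^{ij}$ uniformly on compacts; the Schauder estimates give uniform $C^{2,\alpha}_{loc}$ bounds for $\tilde u_k$ on compact subsets of $\overline{Q_c^-}\setminus\{0\}$, using that $\tilde u_k$ is uniformly bounded there (by $|x|^\beta$) and has boundary data $(\varphi_k-q)/\eps_k$ bounded by $\delta_k\to 0$. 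Passing to the limit, $w$ is harmonic in $Q_c^-$, vanishes on $\p Q_c^-$, and satisfies $|w|\le |x|^\beta$ on $Q_c^-$ (the bound passes to the limit on the full cone since the annuli exhaust it). The growth bound $|w|\le|x|^\beta$ with $\beta<\beta_c^-\cdot(\text{slightly larger})$... here one must be careful: $\beta\in(\beta_c^-,2]$, and $\beta_c^-$ is exactly the homogeneity of the first nonconstant harmonic function on the cone $Q_c^-$ vanishing on its two edges. So a priori $w$ could be a nonzero multiple of the $r^{\beta_c^-}$-homogeneous harmonic; but since $\beta>\beta_c^-$, the bound $|w|\le|x|^\beta$ near $0$ forces the $r^{\beta_c^-}$ mode to vanish, and at $\infty$ it forces all modes of homogeneity $>\beta$ to vanish, leaving only homogeneities in $(\beta_c^-,\beta]$. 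In any case $w$ is a bona fide bounded (on $B_{1/2}$, say, after noting $|w|\le |x|^\beta\le 1$ there) harmonic function vanishing on the boundary, with $|w|\le|x|^\beta$ on the relevant annulus.

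The final step is the contradiction: for $k$ large, $\eps_k^{-1}|u_k - q - \eps_k w| = |\tilde u_k - w|\le\eta$ on $(B_{1/(2\rho_k)}\setminus B_{2\rho_k})\cap\overline{Q_c^-}$, because $\tilde u_k\to w$ uniformly on this set — here one uses that for $k$ large the set $(B_{1/(2\rho_k)}\setminus B_{2\rho_k})$ is contained in, say, $B_{1/(2\rho_{k_0})}\setminus B_{2\rho_{k_0}}$ only on an exhausting family, so one should instead argue the uniform convergence on each fixed compact annulus and then handle the "thin ends" near $0$ and $\infty$ separately using the common bound $|\tilde u_k - w|\le |x|^\beta + |x|^\beta = 2|x|^\beta$ which is small near $0$ and... no — near $\infty$, $2|x|^\beta$ is large. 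This is the main obstacle: the region is an unbounded-in-$k$ annulus, so uniform convergence of $\tilde u_k\to w$ is not automatic on all of it at once. I would resolve it the same way the analogous one-variable/interior statements are handled: prove the approximation not on the full annulus but on a fixed region $B_{1/2}\cap\overline{Q_c^-}$ near the origin for the $P_c^-$-type statement — wait, for $P_c^-$ the relevant scale is near $\infty$ in the normalization, or rather the lemma is stated on $(B_{1/(2\rho)}\setminus B_{2\rho})$ with $\rho$ small, i.e. a large but fixed (given $\rho$) annulus. So for fixed target $\eta$, $\rho$ is chosen first (depending on $\eta,c,\beta$) and then $\eps_0,\delta$ after; with $\rho$ fixed the annulus $(B_{1/(2\rho)}\setminus B_{2\rho})$ is a fixed compact subset of $\overline{Q_c^-}\setminus\{0\}$ and uniform convergence $\tilde u_k\to w$ on it gives the contradiction directly. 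The role of $\rho\to 0$ is only in the hypotheses (the pinching holds on the larger annulus $(B_{1/\rho}\setminus B_\rho)$), which gives more room to localize the elliptic estimates away from $\p B_{1/\rho}$ and $\p B_\rho$; this is exactly parallel to Lemma \ref{appro_lem}, where the hypotheses are on $B_1$ and the conclusion on $B_{1/2}$. So the clean statement of the argument: fix $\eta$; if the lemma fails for every $\eps_0,\delta$ (with $\rho$ to be chosen), take $\rho_k\to 0$, $\eps_k,\delta_k\to 0$, normalize, extract $w$ harmonic on the cone vanishing on the boundary with the stated growth, and observe $\tilde u_k\to w$ uniformly on $(B_{1/(2\rho)}\setminus B_{2\rho})\cap\overline{Q_c^-}$ for each fixed $\rho$ and $k$ large — contradicting the standing assumption. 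The genuinely delicate point to get right is the boundary regularity of $\tilde u_k$ up to the two edges of the corner domain $Q_c^-$ away from the vertex, which reduces to flat-boundary Schauder estimates for the $b_k^{ij}$-equation with small boundary data, and the verification that the limiting growth exponent interacts correctly with $\beta_c^-$ so that $w$ is nontrivial only through the modes the statement allows.
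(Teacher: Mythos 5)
Your compactness--contradiction scheme is a valid way to prove the lemma, but it takes a different technical route from the paper. The paper explicitly says the proof of Lemma~\ref{appro_lem-} is ``essentially the same'' as that of Lemma~\ref{appro_lem}, and that proof never invokes Schauder estimates up to the boundary. Instead it (a) controls $(u-q)/\eps$ near $\p Q_c^{\pm}$ by an explicit barrier of the form $q + \delta\eps \pm 4\eps[(x_1-a)^2-2x_2^2] \pm C\eps x_2$, which gives linear separation $|u-q|\le C\eps\,\dist(x,\p Q_c^{\pm})+\delta\eps$; (b) on interior balls $B_{2r}(z)$, replaces $u$ by the solution $u_0$ of $\det D^2 u_0=1$ with the same boundary data and shows $|u-u_0|\le 4r^2\delta\eps$ by a simple comparison; and (c) linearizes $(u_0-q)/\eps$, not $(u-q)/\eps$, so the right-hand side is exactly zero rather than just small, and concludes convergence to a harmonic function using only uniform ellipticity of the coefficient matrix $A_\eps\to I$ (which needs Krylov--Safonov, not Schauder). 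Your route substitutes boundary $C^{2,\alpha}$ regularity for Monge--Amp\`ere from~\cite{S1} plus flat-boundary Schauder for (a)--(c); this buys nothing here and requires more machinery, although it is not wrong. The one point to get right on either route is the barrier (or boundary regularity) argument up to the two edges of the obtuse cone $Q_c^-$ on an annulus; the paper's barrier is straightforward to adapt there.

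One place your write-up is genuinely muddled and should be cleaned up: the role of $\rho$. In the lemma, $\rho=\rho(c,\beta)$ is fixed from Proposition~\ref{C2P-2}; only $\eps_0$ and $\delta$ are required to be small depending on $\eta$. Your initial contradiction setup takes $\rho_k\to 0$, which changes the annulus with $k$ and makes the uniform-convergence step genuinely problematic (you correctly flag this yourself). You then state the right fix --- fix $\rho$, take $\eps_k,\delta_k\to 0$, use uniform convergence on the \emph{fixed} compact annulus --- but your ``clean statement'' at the end again says ``take $\rho_k\to 0$.'' Drop that: the compactness should be run with $\rho$ fixed. The related paragraph trying to anticipate the interaction of the growth exponent $\beta$ with $\beta_c^-$ (forcing certain homogeneous modes of $w$ to vanish) belongs to the proof of Lemma~\ref{appro_lem2}, not to this approximation lemma, and can be removed here. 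Finally, the limit $w$ you extract is harmonic on the fixed annulus $(B_{1/\rho}\setminus B_\rho)\cap Q_c^-$, not a priori on all of $Q_c^-$ as the lemma's phrasing suggests; this imprecision is shared by the paper and is harmless because the subsequent use (Lemma~\ref{appro_lem2}) only ever evaluates $w$ on that annulus, but it is worth noting rather than asserting the extension without proof.
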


\begin{proof}[Proof of lemma \ref{appro_lem}] The proof of this lemma is similar to that of Lemma 2.6 in \cite{LS2017}. We give the details below. First we show that in $B_{1/2} \cap Q_c^+$ we have
\begin{equation}\label{300}
|u(x)-q(x)| \le C \ep\dist(x,\p Q_c^+) +  \delta \ep,
\end{equation}
for some constant $C$ depending only on $c$. Pick a point $(a,0)$ on the $x_1$ - axis, with $ a \in [0, 1/2]$. We claim that
$$\bar w := q + \delta \eps + 4 \ep [(x_1-a)^2 - 2 x_2^2] + C \ep x_2,$$
is an upper barrier for $u$ in $B_1 \cap Q_c^+$, and
$$ \underline w := q - \delta \eps - 4 \ep [(x_1-a)^2 - 2 x_2^2] - C \ep x_2,$$
is a lower barrier. Indeed, $$\det D^2 \bar w \le  1 - \eps \leq \det D^2 u,$$
and 
$$\mbox{$\bar w \ge q + \delta \eps\ge u$ on $\p Q_c^+ \cap B_1$,  and $\bar w \ge q + \eps\ge u$ on $\p B_1\cap Q_c^+$,}$$
provided that $C$ is chosen sufficiently large. Thus $u \le \bar w$ in  $B_1 \cap Q_c^+$ by the maximum principle. Similarly we obtain that $u \ge \underline w$ in $B_1 \cap Q_c^+$. By choosing $a=x_1$, we find 
$$|u(x)-q(x)| \le C' \eps x_2 + \delta \eps \quad \mbox{ in} \quad  B_1 \cap Q_c^+ \cap \{0<x_1< 1/2\},$$ 
and \eqref{300} easily follows.

Next we define
$$v_{\ep}:= (u-q)/\ep,$$ 
and, by hypothesis, 
$$|v_\ep|\leq 1 \quad \text{in}\quad B_1\cap Q_c^{+}.$$
It suffices to show that for a sequence of $\ep, \delta \rightarrow 0$, the corresponding $v_{\ep}$'s converges uniformly in $B_{1/2} \cap\overline{Q_c^{+}}$ to a solution of (\ref{w_har+}) along a subsequence.

By \eqref{300} we find that $v_{\ep}$ grows at most linearly away from $\p Q_c^{+}$. 

It remains to prove the uniform convergence of $v_{\ep}$'s on compact subsets of $B_{1}\cap Q_c^{+}$. 

Fix a ball $B_{2r}(z)\subset B_{1}\cap Q_c^{+}$.  Let $u_0$ be the convex solution to $\det D^2 u_0 =1$ in $B_{2r}(z)$ with boundary value $u_0=u$ on $\p B_{2r}(z)$. We claim that
\begin{equation*}|u-u_0|\leq 4r^2 \delta \ep~\text{in } B_{2r}(z).
\end{equation*}
To see this, we use the maximum principle and the following inequality
$$
\det (A+\lambda I_2) \geq \det A + 2\lambda (\det A)^{1/2},~\text{if } A\geq 0, \lambda\geq 0$$
to obtain in $B_{2r}(z)$ 
$$u+ \delta \ep (|x-z|^2 -(2r)^2) \leq u_0\quad \text{and } u_0 + \delta \ep (|x-z|^2 -(2r)^2)\leq u$$
from which the claim follows.

Now, if we denote
$$v_0:= (u_0-q)/\ep$$
then
$$|v_\ep-v_0|=|u-u_0|/\ep\leq 4r^2\delta~\text{in } B_{2r}(z),$$
and hence $v_\ep-v_0\rightarrow 0$ uniformly in $\overline{B_r(z)}$ as $\delta \rightarrow 0$.

Next, we show that, as $\ep_0\rightarrow 0$, the corresponding $v_0's$ converges uniformly, up to extracting a subsequence, in $\overline{B_r(z)}$, to a solution of (\ref{w_har+}).
Note that
$$0= \frac{1}{\ep}(\det D^2 u_0-\det D^2 q)= \text{trace} (A_\ep D^2 v_0)$$
where,  using $\text{cof}(M)$ to denote the cofactor matrix of the matrix $M$,
$$A_\ep= \int_0^1 \text{cof} (D^2 q + t (D^2 u_0-D^2 q)) dt.$$

We note that as $\ep_0\rightarrow 0$, we have $\ep\rightarrow 0$ and $u\to q$; therefore $D^2 u_0\rightarrow D^2 q= I_2$ uniformly in $\overline{B_r(z)}$. This shows 
that $A_\ep\rightarrow I_2$ uniformly in $\overline{B_r(z)}$ and thus $v_0$'s must converge to a harmonic function $w$ satisfying (\ref{w_har+}).
The bound $|w|\leq 1$ in $B_1\cap Q_c^{+}$ follows from from the corresponding bound for $v_\ep$ and the convergence $v_\ep-v_0\rightarrow 0$.
\end{proof}
\begin{proof}[Proof of Lemma \ref{appro_lem-}]
The proof of this lemma is essentially the same as that of Lemma \ref{appro_lem} so we omit it.

\end{proof}

\subsection{Harmonic functions in $Q_c^{\pm}$}
\label{Harmonic_sect}
Next we collect some standard facts about harmonic functions which vanish on the boundary of an angle. We note that, at the vertex $0$, the opening of $Q_c^{+}$ is an acute angle $\alpha_c^{+}\in (0,\frac{\pi}{2})$ while the opening of $Q_c^{-}$ is an obtuse angle $\alpha_c^{-}\in (\frac{\pi}{2},\pi)$ . In fact, we have
$$\cos \alpha_c^\pm = \pm \sqrt{1-c}.$$
Let us denote
$$\beta_c^{\pm}=\frac{\pi}{\alpha_c^{\pm}}.$$
Note that
$$\beta_c^+ >2~\text{while } 1<\beta_c^-<2.$$
 For any $(x_1, x_2)\in \R^2$, we can identify it with the complex number $z= x_1+ i x_2\in \C$. The conformal mappings $z\in Q_c^{\pm}\rightarrow \hat z^\pm:= z^{\beta_c^{\pm}}\in \H$ map
$Q_c^\pm$ to the upper-half plane $\H$. 
Let us consider $\hat w^\pm (\hat z^\pm)= w(z)$.
Corresponding to any solution $w$ to
$$\Delta w =0 \text{ in }Q_c^{\pm },~w=0~\text{ on } \p Q_c^{\pm}, $$
there is a harmonic function $\hat w$ in the upper-half plane $\H$ with zero boundary data, that is, $\hat w=0$ on $\p \H=\{x_2=0\}.$ Moreover, $w$ can be recovered from $\hat w$ via the formula
$$w(z) = \hat w (z^{\beta_c^\pm}).$$
As such, 
any solution $w$ to
$$\Delta w =0 \text{ in }Q_c^{+},~w=0~\text{ on } \p Q_c^{+} $$
is $C^{2,\alpha}$ in $\overline{B}_{1/2}\cap \overline{Q}_c^{+}$ for any $\alpha\in (0, 1]$ satisfying $\alpha\leq \beta_c^+-2$.

\begin{lemma}
\label{C2_Taylor}
Assume that $w$ solves 
\begin{equation}
\label{w_har2}
\Delta w =0 \text{ in } Q_c^{+},~w=0~\text{ on } \p Q_c^{+} 
\end{equation}
and $\|w\|_{L^{\infty}(B_1\cap Q_c^{+})}\leq 1$. Then there are constants $C_0>0$ and $\alpha_0\in (0, 1)$ depending only on $c$ such that $w$ satisfies
$$|w(x)|\leq C_0|x|^{2+\alpha_0}\quad \text{in }B_{1/2}\cap Q_c^{+}.$$
\end{lemma}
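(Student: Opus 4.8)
The plan is to reduce the estimate to a statement about a bounded harmonic function on a half-disk via the conformal power map, and then to combine the Schwarz reflection principle with interior gradient estimates; the decisive point is that the opening $\alpha_c^+$ of $Q_c^+$ is acute, so $\beta_c^+=\pi/\alpha_c^+>2$. First I would transfer $w$ to the half-plane. Since $Q_c^+=\{\arg z\in(0,\alpha_c^+)\}$ and $\beta_c^+\alpha_c^+=\pi$, the single-valued branch of $\Phi(z):=z^{\beta_c^+}$ maps $Q_c^+$ conformally onto $\H$, with $|\Phi(z)|=|z|^{\beta_c^+}$; in particular it carries $B_1\cap Q_c^+$ onto $B_1\cap\H$ and $\p Q_c^+\cap B_1$ onto $(-1,1)$. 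Because a harmonic function stays harmonic under a conformal change of variables, $\hat w:=w\circ\Phi^{-1}$ is harmonic in $B_1\cap\H$, satisfies $|\hat w|\le 1$ there (as $|w|\le 1$ on $B_1\cap Q_c^+$), and vanishes continuously on $(-1,0)\cup(0,1)$.

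Next I would reflect and estimate. The segments $(-1,0)$ and $(0,1)$ lie on $\p(B_1\cap\H)$ and $\hat w$ vanishes on them, so by the Schwarz reflection principle the odd extension $\hat w(s,t):=-\hat w(s,-t)$ is harmonic in $B_1\setminus\{0\}$; being bounded there it extends to a harmonic function in all of $B_1$, with $|\hat w|\le 1$ and $\hat w(\cdot,0)\equiv 0$ on $(-1,1)$. Interior gradient estimates then give $\|\nabla\hat w\|_{L^\infty(B_{3/4})}\le C$ with $C$ universal; integrating $\p_t\hat w$ along the vertical segment from $(s,0)$ to $(s,t)$, which stays inside $B_{3/4}$, yields
\[
|\hat w(\hat z)|\le C\,|\mathrm{Im}\,\hat z|\qquad\text{for }|\hat z|\le\tfrac34 .
\]

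Finally I would translate the estimate back. For $x\in B_{1/2}\cap Q_c^+$ and $\hat z=\Phi(x)$ we have $|\hat z|=|x|^{\beta_c^+}\le 2^{-\beta_c^+}<\tfrac14$ and $|\mathrm{Im}\,\hat z|\le|\hat z|=|x|^{\beta_c^+}$, so $|w(x)|=|\hat w(\hat z)|\le C\,|x|^{\beta_c^+}$. Since $\alpha_c^+<\tfrac\pi2$ forces $\beta_c^+>2$, setting $\alpha_0:=\min\{\beta_c^+-2,\tfrac12\}\in(0,1)$ and using $|x|\le\tfrac12<1$ gives $|x|^{\beta_c^+}\le|x|^{2+\alpha_0}$, whence $|w(x)|\le C_0|x|^{2+\alpha_0}$ in $B_{1/2}\cap Q_c^+$ with $C_0=C$; both $\alpha_0$ and $C_0$ depend only on $c$, through $\beta_c^+$.

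There is essentially no serious obstacle here: the only point that deserves a word of care is the passage across the vertex after reflection, i.e.\ that the bounded harmonic function obtained on $B_1\setminus\{0\}$ really is harmonic on all of $B_1$, which is just the removability of isolated singularities for bounded harmonic functions in the plane, together with the fact that the bound $\|w\|_{L^\infty(B_1\cap Q_c^+)}\le 1$ passes unchanged through $\Phi$. An alternative route, avoiding reflection altogether, is to expand $w$ in polar coordinates adapted to $Q_c^+$ as $\sum_{k\ge1}a_k\,r^{k\beta_c^+}\sin(k\beta_c^+\theta)$ — boundedness of $w$ ruling out the negative-exponent modes — isolate the leading term $k=1$, and bound $|a_1|$ by $\|w\|_{L^\infty(B_1\cap Q_c^+)}$; but the conformal-map argument above is shorter.
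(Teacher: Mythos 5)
Your proposal is correct and follows essentially the same route as the paper's proof: conformally transport $w$ to the upper half-plane via $z \mapsto z^{\beta_c^+}$, obtain a Lipschitz (gradient) bound for $\hat w$ near the flat boundary, and pull it back to get $|w(x)| \le C|x|^{\beta_c^+}$, concluding via $\beta_c^+ > 2$. The only difference is one of exposition: the paper simply asserts that $\hat w$ is smooth in $B_{3/4} \cap \overline{\H}$ (which implicitly relies on the same Schwarz reflection you carry out explicitly), whereas you spell out the odd extension across the real axis, the removability of the singularity at the origin, and the integration along vertical segments — all correct and standard.
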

\begin{proof}
Note that the harmonic function $\hat w$ corresponding to $w$ is smooth in $B_{3/4}\cap \overline{\H}$. Thus, we have $$\|D \hat w \|_{L^{\infty}(B_{3/4}\cap\overline{\H})}\leq C.$$ It follows that for any $\hat z\in B_{3/4}\cap\overline{\H}$, we have
$$|\hat w(\hat z)|=|\hat w(\hat z)-\hat w(0)|\leq C|\hat z|. $$
The desired estimate of the lemma with $\alpha_0:=\min\{1, \beta_c^+-2\}$ follows from
$w(z) = \hat w (z^{\beta_c^+}).$

\end{proof}

A solution $v$ to
$$\Delta v =0 \text{ in } Q_c^{-},~v=0~\text{ on } \p Q_c^{-} $$
can be only $C^{1,\alpha}$ in $\overline{B}_{1/2}\cap \overline{Q}_c^{-}$. 

{\bf Notation.} We denote by $v_0=\text{Im}(z^{\beta_c^-})$ the positive, homogenous of degree $\beta^-_c \in (1,2)$ harmonic function which satisfies the equation above. In polar coordinates $(r,\theta)$, $v_0$ is given by
\begin{equation}
\label{v0fn}
v_0(r,\theta)=r^{\beta_c^-} \sin (\beta_c^-\theta).
\end{equation}

We need the following result for the proof of Proposition \ref{C2P-2}.
\begin{lemma}
\label{appro_lem2}
Let $\beta\in (\beta_c^-, 2 \beta_c^-)$.
Suppose that $w$ satisfies
\begin{equation}
\label{w_har3}
\Delta w =0 \text{ in }Q_c^{-},~w=0~\text{ on } \p Q_c^{-}, 
\end{equation}
and that $$|w(x)|\leq |x|^{\beta} \quad \mbox{ in}  \quad (B_{1/(2\rho)}\setminus B_{2\rho})\cap Q_c^{-}.$$ Then, given a positive constant $\gamma$, we can find $\rho=\rho(\beta,\gamma, c)>0$ 
sufficiently small such that $$|w|\leq \gamma \quad \mbox{ on}  \quad \p B_1 \cap Q_c^{-}.$$
\end{lemma}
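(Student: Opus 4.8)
The plan is to expand $w$ in the natural basis of harmonic functions on the angle $Q_c^-$ that vanish on the boundary, namely
$$\phi_k(r,\theta) = r^{k\beta_c^-}\sin(k\beta_c^-\theta), \qquad k=1,2,\dots,$$
so that on the annular sector we may write $w = \sum_{k\ge 1} a_k \phi_k + \sum_{k\ge 1} b_k \psi_k$, where $\psi_k(r,\theta)=r^{-k\beta_c^-}\sin(k\beta_c^-\theta)$ are the corresponding ``negative-order'' solutions (this is the Fourier decomposition in $\theta$ of the trace of $w$ on circles, each $\theta$-mode being a combination of $r^{k\beta_c^-}$ and $r^{-k\beta_c^-}$). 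The key mechanism is that the smallest positive exponent is $\beta_c^- < \beta$, while all other exponents appearing are either $\ge 2\beta_c^- > \beta$ (from $k\ge 2$ in the first sum) or negative (the second sum). I would like to conclude that the bound $|w|\le |x|^\beta$ on the wide annulus $B_{1/(2\rho)}\setminus B_{2\rho}$ forces the dangerous low-order coefficients to be tiny once $\rho$ is small.

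First I would extract the coefficients via the orthogonality of $\{\sin(k\beta_c^-\theta)\}$ on $(0,\alpha_c^-)$: for a fixed radius $r$ in the annulus, $a_k r^{k\beta_c^-} + b_k r^{-k\beta_c^-}$ equals a constant times $\int_0^{\alpha_c^-} w(r,\theta)\sin(k\beta_c^-\theta)\,d\theta$, which by hypothesis is bounded by $C r^\beta$. Evaluating this at two radii, say $r=2\rho$ and $r=1/(2\rho)$, gives a $2\times 2$ linear system for $(a_k,b_k)$ with Vandermonde-type determinant $\sim (2\rho)^{-k\beta_c^-} - (2\rho)^{k\beta_c^-}$, which is bounded below for $\rho$ small; solving it yields $|a_1| \le C\rho^{\beta-\beta_c^-}\to 0$ as $\rho\to 0$ (since $\beta>\beta_c^-$), and similarly $|a_k|\le C\rho^{\beta - k\beta_c^-}$ and $|b_k| \le C\rho^{\beta+k\beta_c^-}$, all going to $0$. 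Then on $\partial B_1\cap Q_c^-$ one has $|w| \le \sum_k(|a_k| + |b_k|) \le C\rho^{\beta-\beta_c^-}$, which is $\le\gamma$ once $\rho=\rho(\beta,\gamma,c)$ is small enough. A cleaner packaging avoiding convergence bookkeeping of the full series: apply interior/boundary estimates for harmonic functions vanishing on a flat boundary (after the conformal change $z\mapsto z^{\beta_c^-}$ straightening $Q_c^-$ to $\mathbb H$) to get that $w$ on $\partial B_1$ is controlled by its sup on, say, $B_{1/\rho^{1/2}}\setminus B_{\rho^{1/2}}$ together with the homogeneity splitting; but the two-radius coefficient argument is the most transparent.

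The main obstacle I anticipate is handling the term of order exactly $\beta_c^-$, i.e.\ the coefficient $a_1$ of $v_0=\mathrm{Im}(z^{\beta_c^-})$: this is the unique exponent strictly below $\beta$, so it is not killed by ``being large-order'' the way the $k\ge 2$ modes are, and one genuinely needs the two-scale structure (the bound holding on an annulus whose inner and outer radii differ by the large factor $1/\rho^2$) to force $a_1$ small. A secondary technical point is justifying the eigenfunction expansion and its convergence up to $\partial B_1$, i.e.\ that $w$, a priori only assumed harmonic and bounded by $|x|^\beta$ on the open annular sector, is represented by the series there — this follows from separation of variables together with the boundary condition $w=0$ on $\partial Q_c^-$ and the fact that $\beta < 2\beta_c^-$ excludes the second positive mode, but it should be stated carefully. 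Everything else (the Vandermonde lower bound, summing the tail) is routine.
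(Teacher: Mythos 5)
Your proposed Fourier decomposition is a legitimate alternative to the paper's argument, but the central calculation as written does not go through, and the slip occurs exactly at the mode you yourself identify as the main obstacle. Solving the $2\times 2$ system with the radii $r=2\rho$ and $r=1/(2\rho)$ and writing $s=2\rho$, Cramer's rule gives
$a_1=\dfrac{c_1(s)\,s^{\beta_c^-}-c_1(1/s)\,s^{-\beta_c^-}}{s^{2\beta_c^-}-s^{-2\beta_c^-}}$.
The denominator is of order $s^{-2\beta_c^-}$, while the bounds $|c_1(s)|\le 2s^\beta$, $|c_1(1/s)|\le 2s^{-\beta}$ make the numerator of order $s^{-\beta-\beta_c^-}$. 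The resulting estimate is $|a_1|\lesssim s^{\beta_c^- -\beta}$, not $s^{\beta-\beta_c^-}$ as you claim; since $\beta>\beta_c^-$, this diverges as $\rho\to 0$, so the argument as written yields nothing. The fix is not more separation between the two chosen radii but less: take both radii at the inner end, e.g.\ $r=2\rho$ and $r=4\rho$. Then the Vandermonde determinant is $(2^{-\beta_c^-}-2^{\beta_c^-})s^{0}$ after factoring, a nonzero constant, and one correctly obtains $|a_1|\le C\,s^{\beta-\beta_c^-}$. Your intuition that ``one genuinely needs the two-scale structure'' misfires here: for $a_1$, only the inner scale matters (a pure $v_0$-term violates $|w|\le|x|^\beta$ precisely where $|x|$ is small). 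The outer scale is what controls the $k\ge 2$ modes; there your exponent is also written with the wrong sign ($\rho^{\beta-k\beta_c^-}\to\infty$ for $k\ge 2$ since $\beta<2\beta_c^-\le k\beta_c^-$; the correct outer-radius bound is $\rho^{k\beta_c^--\beta}\to 0$).

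For comparison, the paper's proof is structurally different and avoids all coefficient bookkeeping: conformally straighten $Q_c^-$ to the upper half-plane, argue by contradiction and compactness to pass to a harmonic function $v$ on all of $\mathbb{H}$ with $v=0$ on $\partial\mathbb{H}$ and $|v(x)|\le|x|^{\alpha_0}$ globally (with $\alpha_0=\beta/\beta_c^-\in(1,2)$), then use odd reflection and the Liouville theorem for harmonic functions of polynomial growth to conclude $v=C x_2$, which the sub-quadratic bound near the origin then kills. This route produces no convergence issues for the series and requires no careful radius selection, at the cost of being non-explicit. If you want to salvage your approach you must (i) pick the two evaluation radii adaptively per mode — inner pair for $k=1$ and for the $b_k$'s, outer pair for $a_k$ with $k\ge 2$ — with bounds uniform in $k$ so the tail sums, and (ii) justify that the separated-variables series converges pointwise up to $\partial B_1\cap Q_c^-$, which you already flag as a point requiring care.
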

\begin{proof} 
Let $\alpha:= \frac{\beta}{\beta_c^-}\in (1,2)$.
Using a conformal mapping to transform $Q_c^-$ to the upper half-plane $\H$, the statement of the lemma is equivalent to the following statement:\\
{\it Let $\alpha\in (1,2)$.
Suppose that $ w$ satisfies
\begin{equation}
\label{w_har4}
\Delta  w =0 \text{ in } \H,~w=0~\text{ on } \{x_2=0\}
\end{equation}
and that $|w(x)|\leq |x|^{\alpha}$ in $(B_{1/(2\rho)}\setminus B_{2\rho})\cap \H$. Then, given a positive constant $\gamma$, we can find $\rho=\rho(\alpha,\gamma)>0$ sufficiently small such that $|w|\leq \gamma$ on $\p B_1 \cap \H$.}

Suppose that the conclusion is false for some $\alpha_0\in (1,2)$. Thus, for each positive integer $n$, we can find a harmonic function $v_n$ in $(B_{n}\setminus B_{1/n})\cap \H$
with $v=0$ on $(B_{n}\setminus B_{1/n})\cap \p \H$ and $|v_n(x)|\leq |x|^{\alpha_0}$ in $(B_{n}\setminus B_{1/n})\cap \H$ but $\|v_n\|_{L^{\infty}(\p B_1\cap \H)}\geq \gamma$. 

Using compactness, we can let $n\rightarrow\infty$ along a subsequence 
to obtain a harmonic function $v$ on $\H$ with the following property:
$$v=0 \text{ on }\p \H, \quad |v(x)|\leq |x|^{\alpha_0} \text{ on }\H \quad \text{ and }\|v\|_{L^{\infty}(\p B_1\cap \H)}\geq \gamma.$$ By using refection about the $x_1$-axis and the Liouville theorem for harmonic functions with polynomial growth, we conclude that $v$ is at polynomial of degree almost 1. Thus, $v$ is of the form
$\pm C x_2$ for some positive constant $C$. Using
$$(x_1^2 + x_2^2)^{\alpha_0/2}\geq |v(x_1, x_2)|= C|x_2|$$
near the origin, we conclude that $C=0$. This contradicts  $\|v\|_{L^{\infty}(\p B_1\cap \H)}\geq \gamma$.

\end{proof}

\begin{rem}\label{r33}
The lemma above is true if we replace $|x|^\beta$ by $\max\{|x|^{\beta_1}, |x|^{\beta_2}\}$ where $\beta_1,\beta_2\in (\beta_c^-, 2\beta_c^-)$ satisfying $\beta_1\leq \beta\leq \beta_2$. 
This means that in Proposition \ref{C2P-} we can relax the hypothesis on $u-P_c^-$ to
$$|u-P_c^-| \le \eps \max\{|x|^{\beta_1}, |x|^{\beta_2}\} \quad \mbox{in} \quad Q \cap (B_{1/\rho} \setminus B_\rho)$$
 where $\beta_1,\beta_2\in (\beta_c^-, 2\beta_c^-)$ satisfying $\beta_1\leq \beta\leq \beta_2$.
 
It follows that if $\beta$ is bounded away from $\beta_c^-$ then we can choose $\rho(c,\beta)$ in Proposition \ref{C2P-} to be also bounded away from $0$.
\end{rem}

\subsection{Proofs of Propositions \ref{C2P+} and \ref{C2P-}}
They are reduced to those of Propositions \ref{C2P+2} and \ref{C2P-2} which we present in this section.
\begin{proof}[Proof of Proposition \ref{C2P+2}] Fix $\alpha\in (0,\alpha_0)$ where $\alpha_0$ is as in Lemma \ref{C2_Taylor}.
The proof, using Lemma \ref{appro_lem} and the $C^{2,\alpha_0}$ estimates for harmonic functions 
on $Q_c^{+}$ in Lemma \ref{C2_Taylor},
  is similar to the $C^{2,\alpha}$ estimates in \cite[Section 2]{LS2017}. We briefly indicate some details. 
  For any $\eta>0$, using Lemma \ref{appro_lem} and \ref{C2_Taylor}, we find that in $B_{1/2}\cap Q_c^{+}$
  $$|u(x)-q(x)|\leq \ep (\eta + C_0|x|^{2+\alpha_0})$$
  provided that $\ep_0(\eta, c)$ and $\delta(\eta, c)$ are chosen sufficiently small and $0<\ep\leq \ep_0(\eta, c)$.
  We choose $\eta = C_0 r_0^{2+\alpha_0}$ for some $r_0>0$ small to be chosen later. Then, in $B_{r_0}\cap Q^{+}_c$,
  $$|u-q|\leq 2\ep C_0 r_0^{2+\alpha_0}\leq \ep r_0^{2+\alpha}$$
  if $r_0$ is sufficiently small depending only on $c$ and $\alpha$. 
 \end{proof}

\begin{proof}[Proof of Proposition \ref{C2P-2}] 
Fix $\eta=\frac{1}{4}$. Let $w$ be as in the statement of Lemma \ref{appro_lem-}.
Then
$$|u-q|\leq \ep (\eta + |w|)~\text{ in } (B_{1/(2\rho)}\setminus B_{2\rho})\cap\overline{Q_c^{-}}$$
provided that $\ep_0(\beta, c)$ and $\delta(\beta, c)$ are chosen sufficiently small and $0<\ep\leq \ep_0(\beta, c)$.
Applying Lemma \ref{appro_lem2} to $w$ and $\gamma:= \frac{1}{2}-\eta=\frac{1}{4}$, we find that, 
$$|w|\leq \gamma\quad \text{on } \p B_1 \cap Q_c^-$$
provided that  $\rho=\rho(\beta, c)$ 
sufficiently small. Therefore, if $\ep_0(\beta, c),\delta(\beta, c)$ and $\rho(\beta, c)$ are sufficiently small, we have
$$|u-q|\leq \ep (\eta + |w|) \leq \ep (\eta + \gamma) =\frac{\ep}{2}\quad \text{on } \p B_1 \cap Q_c^-.$$

 \end{proof}
  
\subsection{Consequences of the second derivative estimates}  
\label{Cor_sect}
Next we state several consequences of the second derivative estimates in Corollary \ref{cor_pm} and Propositions \ref{C2P+} and \ref{C2P-}.

\begin{lemma} 
\label{Pcpm2}
Assume that $u$ is a solution to (\ref{ueq})-(\ref{ubdr}) which is not quadratic. Then
$$\lim_{x \to 0} u_{12}(x)=\sqrt{1-c}, \quad \mbox {and} \quad \lim_{|x| \to \infty} u_{12}(x)=-\sqrt{1-c}.$$
\end{lemma}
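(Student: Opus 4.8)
The goal is to upgrade the information from Lemma \ref{uxy_bound}(ii) and Remark \ref{dfl}, which already tells us that for a nonquadratic solution $u$ the quantities $\liminf_Q u_{12}$ and $\limsup_Q u_{12}$ lie in $\{\pm\sqrt{1-c}\}$ and cannot both equal the same value (else $u=P_c^\pm$), so that one of the two limits at $0$ and $\infty$ is $+\sqrt{1-c}$ and the other is $-\sqrt{1-c}$. What remains is to pin down which is which: the claim is that $+\sqrt{1-c}$ is forced at the origin and $-\sqrt{1-c}$ at infinity. The plan is to rule out the opposite scenario, namely that $u_{12}\to -\sqrt{1-c}$ as $x\to 0$ (equivalently $u_{12}\to +\sqrt{1-c}$ as $|x|\to\infty$), by deriving a contradiction from the pointwise $C^{2,\alpha}$ machinery of Section 3.

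\textbf{Main steps.} First, suppose for contradiction that $\lim_{x\to 0} u_{12}(x) = -\sqrt{1-c}$. Then for any $\delta>0$ there is a small scale $r$ with $u_{12} \le -\sqrt{1-c}+\delta$ everywhere on $\p B_r\cap\bar Q$; after a quadratic rescaling we may take $r=1$, so Corollary \ref{cor_pm}(i) gives $\|u-P_c^-\|_{C^2}\le\eps$ on $(B_{1/\rho}\setminus B_\rho)\cap\bar Q$ with $\eps,\rho\to 0$ as $\delta\to 0$. In particular, fixing $\beta\in(\beta_c^-,2)$, for $\delta$ small enough we get $|u-P_c^-|\le\eps|x|^\beta$ on an annulus, with $f\equiv c$ and $\varphi=q$ exactly (so the $\delta\eps$ terms in Proposition \ref{C2P-} are trivially satisfied). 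Then I would iterate Proposition \ref{C2P-} (in the rescaled form, i.e.\ the improvement-of-flatness statement $|u-P_c^-|\le\frac\eps2$ on $\p B_1\cap Q$): each application halves the $\eps$ at the next smaller scale while keeping the same $\beta$, so by the usual geometric-series/dyadic argument one concludes $|u(x)-P_c^-(x)|\le C\eps|x|^{2+\alpha}$ near the origin for some $\alpha>0$ — this is exactly the consequence recorded after Proposition \ref{C2P-} (Lemma \ref{Pc-cor}). But this forces $u\in C^{2,\alpha}(0)$ with $D^2u(0)=D^2P_c^-$, and in particular $u_{12}(0)=-\sqrt{1-c}$ is attained. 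By Lemma \ref{uxy_bound}(i), since $u_{12}=-\sqrt{1-c}$ at an interior-type extreme point of $\bar Q\setminus\{0\}$... — more carefully, since the minimum value $-\sqrt{1-c}$ of $u_{12}$ over $Q$ is now attained (in the limit, or at a point after noting $u_{12}$ extends continuously to $0$), one gets $u=P_c^-$, contradicting the assumption that $u$ is nonquadratic.

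\textbf{Where the care is needed.} The delicate point is the same one flagged in Remark \ref{r33}: Proposition \ref{C2P-} requires $\beta>\beta_c^-$ strictly and the parameter $\rho(c,\beta)$ degenerates as $\beta\downarrow\beta_c^-$, so I must fix $\beta$ bounded away from $\beta_c^-$ from the start (e.g.\ $\beta=\frac12(\beta_c^-+2)$), and then Remark \ref{r33} guarantees $\rho$ stays bounded away from $0$ through the iteration, which is what makes the dyadic iteration legitimate. A second subtlety is the matching of hypotheses when passing from the $C^2$-closeness of Corollary \ref{cor_pm}(i) to the $|x|^\beta$-weighted closeness needed to start Proposition \ref{C2P-}: on the fixed annulus $B_{1/\rho}\setminus B_\rho$ the two norms are comparable up to constants depending on $\rho$, so one just needs $\delta$ (hence the initial $\eps$) small enough relative to these constants — routine but worth stating. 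The statement about $|x|\to\infty$ then follows immediately by applying the origin statement to the rescalings $\lambda^{-2}u(\lambda x)$ and letting $\lambda\to\infty$, or equivalently by the symmetry already noted in Remark \ref{dfl} that the two limits must be the two distinct values $\pm\sqrt{1-c}$. The only genuine obstacle is making sure the iteration of Proposition \ref{C2P-} is set up with a legitimately fixed $\beta$ and a non-degenerate $\rho$; everything else is bookkeeping with the barriers and compactness already established.
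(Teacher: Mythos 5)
Your overall strategy is close in spirit to the paper's, but you have inverted the roles of $P_c^+$ and $P_c^-$ and this creates a genuine gap at the very end. The paper argues directly from the $P_c^+$ side: since $\limsup_Q u_{12}=\sqrt{1-c}$ (Lemma~\ref{uxy_bound}(ii)), there is some scale $r$ and a point $z\in\p B_r\cap Q$ with $u_{12}(z)\ge\sqrt{1-c}-\delta$; after rescaling, Corollary~\ref{cor_pm}(ii) and Corollary~\ref{Pc2a} give $|u-P_c^+|\le\eps_0 r^{-\alpha}|x|^{2+\alpha}$ in $B_{r/2}\cap Q$, hence $u_{12}\to\sqrt{1-c}$ at the origin, and Remark~\ref{dfl} forces the limit $-\sqrt{1-c}$ at infinity. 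There is no contradiction step and no need to touch the $P_c^-$ iteration.

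The gap in your contradiction argument is the last step. After Lemma~\ref{Pc-cor} gives $u(x)=P_c^-(x)+O(|x|^{2+\alpha})$, you assert that since the infimum $-\sqrt{1-c}$ of $u_{12}$ is now attained (at $0$), Lemma~\ref{uxy_bound}(i) gives $u=P_c^-$. But Lemma~\ref{uxy_bound}(i) is stated for extrema attained at points of $\overline Q\setminus\{0\}$, and its proof relies on the Hopf lemma at a smooth boundary point with the interior sphere condition; neither applies at the corner $0$. Attaining a minimum at the corner does not force constancy for a solution of a second-order linear elliptic equation: for example, $2x_1x_2$ is harmonic in $Q$, vanishes at $0$, and is strictly positive in $Q$. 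You would need some additional quantitative input (a boundary Harnack comparison at the corner, or the full classification in Proposition~\ref{Cl}, which in the paper's logical order is proved \emph{after} and \emph{using} this lemma), so as written the step is circular or incomplete. A quick repair within your framework is to run the same $P_c^+$ machinery at infinity: in your Case B, $u_{12}\to\sqrt{1-c}$ as $|x|\to\infty$, so rescaling $u_R=R^{-2}u(R\cdot)$ with $R\to\infty$ and applying Corollary~\ref{cor_pm}(ii) and Corollary~\ref{Pc2a} yields $|u(y)-P_c^+(y)|\le C\eps_0R^{-\alpha}|y|^{2+\alpha}$ for $|y|\le R$, and letting $R\to\infty$ forces $u=P_c^+$, contradicting nonquadraticity; this is exactly what the paper does on the origin side. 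The earlier parts of your write-up (fixing $\beta$ away from $\beta_c^-$, matching the $C^2$-annular closeness of Corollary~\ref{cor_pm}(i) to the weighted hypothesis of Proposition~\ref{C2P-}, invoking Remark~\ref{r33} to keep $\rho$ nondegenerate) are handled correctly.
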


\begin{proof}
From Corollary \ref{cor_pm} and Corollary \ref{Pc2a} we know that if  $u_{12}(z)\geq \sqrt{1-c}-\delta$ at some point $z$ in  $\p B_r \cap Q$, with $\delta$ small 
universal, then $$|u(x)-P_c^+(x)| \le \eps_0 r^{-\alpha} |x|^{2+\alpha}  \quad \mbox{in} \quad B_{r/2} \cap Q.$$ This implies 
that $u_{12}$ converges to $\sqrt{1-c}$ at the origin and the lemma follows by Remark \ref{dfl}.

\end{proof}

\begin{lemma}
\label{Pc-cor}
Assume that $u$ satisfies (\ref{DP2}) where $c\in (0,1)$. Furthermore, assume that
  $$|u(x)-P_c^-(x)| \le \eps_0|x|^2, |f(x)-c| \le \delta \eps_0 |x|^\alpha \quad \mbox{in $Q \cap B_1$},\quad \text{and }
  |\varphi(x) -q(x)| \le \delta \eps_0 |x|^{2+\alpha}  \mbox{ on $\p Q \cap B_1$},$$
  where $\ep_0$, $\delta$, $\alpha$ are small depending on $c$. 
Then $$|u(x)-P_c^-(x)| \le C \eps_0 |x|^{2+\alpha}\quad \text{in } Q\cap B_1.$$ 
\end{lemma}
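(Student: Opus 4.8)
The plan is to run an iteration scheme in dyadic scales driven by Proposition~\ref{C2P-}, exactly parallel to how Corollary~\ref{Pc2a} follows from Proposition~\ref{C2P+}, with the one extra subtlety that Proposition~\ref{C2P-} only controls $u-P_c^-$ on an annulus and improves the bound on the \emph{middle} sphere rather than at the origin, so the induction has to be organized around spheres $\p B_{2^{-k}}$ rather than balls.

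First I would fix the exponent $\beta=2$ in Proposition~\ref{C2P-} (note $2\in(\beta_c^-,2]$ since $\beta_c^-<2$) and the associated small constants $\delta(c),\rho(c),\eps_0(c)$; by Remark~\ref{r33} we may take $\rho$ bounded away from $0$, say $\rho=\rho(c)$ fixed. The hypotheses give $|u-P_c^-|\le\eps_0|x|^2$ in $B_1\cap Q$, $|f-c|\le\delta\eps_0|x|^\alpha$, $|\varphi-q|\le\delta\eps_0|x|^{2+\alpha}$; I will choose $\alpha=\alpha(c)$ small at the end. The claim is the improved bound
$$|u(x)-P_c^-(x)|\le C\eps_0|x|^{2+\alpha}\quad\text{in }Q\cap B_1.$$
It suffices to prove, for every $k\ge 0$,
$$|u(x)-P_c^-(x)|\le \eps_0\,2^{-k\alpha}|x|^2\quad\text{on }\p B_{2^{-k}}\cap\bar Q,$$
since then convexity of $u-P_c^-$ together with its vanishing on $\p Q$ (here one uses that $P_c^-=q=\varphi+O$ on $\p Q$, absorbing the $\delta\eps_0|x|^{2+\alpha}$ boundary error) propagates the sphere bounds to the solid annuli $B_{2^{-k}}\setminus B_{2^{-k-1}}$ up to a universal constant, giving the $|x|^{2+\alpha}$ decay with $C=C(c)$.

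The induction step is the heart: assuming the bound on $\p B_{2^{-k}}$, rescale $\tilde u(x):=2^{2k}u(2^{-k}x)$, which solves $\det D^2\tilde u=\tilde f$ with $\tilde f(x)=f(2^{-k}x)$, boundary data $\tilde\varphi(x)=2^{2k}\varphi(2^{-k}x)$, and satisfies $|\tilde u-P_c^-|\le \eps_k:=\eps_0 2^{-k\alpha}$ on $\p B_1\cap\bar Q$ — and, after again invoking convexity/comparison on the annulus $B_1\setminus B_\rho$, the bound $|\tilde u-P_c^-|\le C(c)\eps_k|x|^2$ throughout $(B_{1/\rho}\setminus B_\rho)\cap Q$; rescaling once more to renormalize the constant, or simply absorbing $C(c)$ into the choice of $\rho$ via Remark~\ref{r33}, one checks $|\tilde u-P_c^-|\le\eps_k|x|^2\le\eps_k|x|^\beta$ on the relevant annulus with $\beta=2$. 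Meanwhile $|\tilde f-c|=|f(2^{-k}\cdot)-c|\le\delta\eps_0 2^{-k\alpha}|2^{-k}x|^\alpha\cdot 2^{k\alpha}\cdots$ — the point is that the scaling of $f$ and $\varphi$ is designed so that $|\tilde f-c|\le\delta\eps_k$ and $|\tilde\varphi-q|\le\delta\eps_k$ on $B_{1/\rho}\setminus B_\rho$ (this is where the hypotheses were normalized by $|x|^\alpha$ and $|x|^{2+\alpha}$ precisely to make the iteration close). Then Proposition~\ref{C2P-} applies to $\tilde u$ and yields $|\tilde u-P_c^-|\le\eps_k/2$ on $\p B_1\cap Q$; since $2^{-\alpha}\ge 1/2$ for $\alpha\le 1$, choosing $\alpha=\alpha(c)$ small enough that also $2^{-\alpha}\ge 1/2$ and $C(c)2^{-\alpha}\cdot(\text{stuff})\le 1$, this is exactly the bound on $\p B_{2^{-k-1}}$ after scaling back.

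The main obstacle I anticipate is the bookkeeping of constants in the passage from a \emph{sphere} bound to the \emph{annulus} bound needed as input to the next application of Proposition~\ref{C2P-}: because $u-P_c^-$ is not harmonic but only a solution of a Monge--Amp\`ere equation with $P_c^-$ as a subsolution-type comparison, one must use the two-barrier argument (as in the proof of Lemma~\ref{appro_lem}, estimate \eqref{300}) to convert $L^\infty$ control on $\p B_{2^{-k}}$ plus the boundary-data smallness into $L^\infty$ control on the full dyadic annulus, and the constant $C(c)$ produced there must be shown not to accumulate over the infinitely many steps — which it does not, because at each step it multiplies $\eps_k=\eps_0 2^{-k\alpha}$, and choosing $\alpha$ small (equivalently $\rho$ small, via Remark~\ref{r33}, to kill $C(c)$) makes the contraction factor $C(c)\,2^{-\alpha}$ strictly less than $1$. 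Once $\alpha$ is fixed this way, the geometric iteration closes and summing the dyadic estimates gives the claimed global bound in $Q\cap B_1$.
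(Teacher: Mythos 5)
Your iteration scheme is recognizably the right one, but the way you have organized it introduces a spurious difficulty, and the device you invoke to overcome that difficulty does not actually work. The paper's proof is an induction on \emph{solid ball} bounds at scales $\rho^k$ (not $2^{-k}$): it assumes
\[
|u(x)-P_c^-(x)|\le \eps_k|x|^2 \quad\text{for } |x|\le \rho^k, \qquad \eps_k:=2^{-k}\eps_0,
\]
with $\alpha$ chosen by $\rho^\alpha=1/2$. The point is that this quadratic bound is invariant under the rescaling $\hat u(x)=r^{-2}u(rx)$: if $r\le\rho^{k+1}$, then for $x$ in the annulus $B_{1/\rho}\setminus B_\rho$ one has $|rx|\le r/\rho\le\rho^k$, so the inductive hypothesis applies \emph{throughout the whole annulus} and gives $|\hat u-P_c^-|\le\eps_k|x|^2$ there with no further work. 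Proposition \ref{C2P-} then yields the bound at $z_0$, and the choice $\rho^\alpha=1/2$ closes the geometric decay. No passage from a sphere bound to an annulus bound is ever needed.

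By contrast, you induct on bounds on the spheres $\p B_{2^{-k}}$ and then try to recover the annulus bound that Proposition \ref{C2P-} requires by invoking ``convexity of $u-P_c^-$''. That claim is false: $u-P_c^-$ is a difference of two convex functions solving Monge--Amp\`ere equations and is, in general, neither convex nor concave. What one can say is that $u-P_c^-$ satisfies a uniformly elliptic linear equation with small right-hand side $f-c$, so a maximum-principle argument in annular domains could in principle transfer sphere bounds plus the $\p Q$ boundary smallness into an interior bound; but this is not what you wrote, it would require using the sphere bounds at several levels $j\le k$ simultaneously (since $B_{1/\rho}$ in rescaled coordinates sticks out past $\p B_1$ when $\rho<1/2$), and the resulting constant is the factor $\rho^{-\alpha}$ you are implicitly hiding in ``$C(c)$''. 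In short, the gap is real as written; it disappears entirely if you change the inductive statement from a sphere bound at $2^{-k}$ to a ball bound at $\rho^k$, which is what the paper does.
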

\begin{proof}
Let $\eps_0=\eps_0(c,2)$, $\delta(c,2)$, and $\rho= \rho(c, 2)$ be as in the statement of Proposition \ref{C2P-}. Choose $\alpha\in (0, 1)$ so that $\rho^\alpha =1/2$. Let $\delta= \delta(c, 2) \rho^{1+\alpha}$.
First, we claim that 
\begin{equation}
\label{ep02}
|u(z)- P_c^-(z)|\leq \frac{\ep_0}{2}|z|^2
\end{equation} for all $z\in Q$ satisfying $$|z|\leq \rho.$$
Indeed, let us fix $|z_0|=r\leq \rho$. We write $z_0= r x_0$ where $|x_0|=1$.  Consider the following functions 
$$\hat u (x) = r^{-2} u(rx),~\hat f (x) = f(rx),~\hat\varphi (x)= r^{-2}\varphi (rx).$$
Then on $B_{1/\rho}\cap Q$
$$|\hat u(x) -P_c^-(x)|= r^{-2}|u(rx)-P_c^-(rx)|\leq r^{-2} \ep_0 |rx|^2 =\ep_0 |x|^2,$$
and 
$$|\hat f(x) - c|= |f(rx)- c|\leq \delta \eps_0 r^\alpha |x|^\alpha,$$
and
$$|\hat \varphi(x) - q(x)|= r^{-2} |\varphi(rx) - q(rx)|\leq \delta \eps_0 r^\alpha |x|^{2+\alpha}.$$

Then $\hat u, \hat f$, and $\hat\varphi$ satisfy the hypotheses of Proposition \ref{C2P-} since $r\leq \rho\leq 1$. By this proposition, we have $|\hat u - P_c^-|\leq \frac{\ep_0}{2}$ on $\p B_1\cap Q$, hence
$$|u(z_0)-P_c^-(z_0)|\leq \frac{\ep_0}{2} r^2 = \frac{\ep_0}{2} |z_0|^2.$$
It follows by induction that
\begin{equation}
\label{ep02k}
|u(x)- P_c^-(x)|\leq \frac{\ep_0}{2^k}|x|^2~\text{for all } x\in Q ~\text{with } |x|\leq \rho^{k}.
\end{equation}
Indeed, as in (\ref{ep02}) we find 
$$|u(x)-P_c^-(x)|\leq \ep_k |x|^2~\text{for all } x\in Q~\text{with }|x|\leq r_k:= \rho^k,$$
with $\ep_k :=2^{-k}\ep_0$, and for this we used $ \eps_0 r_k ^\alpha =\eps_k $. 
The conclusion of the lemma now easily follows.
\end{proof}

\section{Classification of global solutions}
\label{T2_pf}
In this section, we prove Theorem \ref{T2} concerning classification of global solutions which satisfy  
\begin{equation}
\label{ueq1}
\det D^2 u=c \quad \mbox{in $Q$}, \quad\text{and} \quad u(x)=\frac{|x|^2}{2}~\text{on }\p Q
\end{equation}
for some constant $c\in (0,1)$. 

Notice that we are no longer assuming that $u\ge 0$ as in Section \ref{C2_sect}. 
The classification of global solutions relies on refined asymptotic analysis at infinity
of these solutions. Our arguments for a non-quadratic solution $u$ to (\ref{ueq1}) can be sketched as follows. 

First, we show in Lemma \ref{uPc-lem} that $u-P_c^-$ grows at most $|x|^{\beta_c^- +\sigma}$ at infinity for 
any $\sigma>0$.

Next, we establish a boundary Harnack principle at infinity for $u$. In Lemma \ref{l41} we show that after the affine transformation using $A_c^-$ that maps $Q$ to $Q_c^-$  and $P_c^-$ to $q$, 
the rescaled difference $(u-P_c^-)\circ A_c^-$ is asymptotically a nonnegative multiple of the positive, harmonic, homogenous of degree $\beta^-_c$ function $v_0$ defined in \eqref{v0fn}, that is
$$(u-P_c^-)\circ A_c^- = (a+ o(1)) v_0 \quad \text{at infinity on } Q_c^-,$$
for some constant $a$. 

This expansion allows us to apply the maximum principle in the unbounded domain $Q_c^-$. We construct two global solutions $\bar P_c$ and $\underbar{P}_c$ to (\ref{ueq1}) for which the corresponding constant $a$ changes sign. Using quadratic rescalings of these solutions together with $P_c^-$, we obtain a continuous family of solutions to (\ref{ueq1}) for which the constant $a$ ranges over the full $\mathbb R$.  The classification of global solutions
then follows by the maximum principle.

We first show that a solution $u$ to (\ref{ueq1}) which is different than $P_c^+$ must be close to $P_c^-$ at infinity.

\begin{lemma}
Assume that $u$ satisfies (\ref{ueq1}) and $u \ne P_c^+$. 
Then \begin{equation}
\label{D2w1}
\lim_{|x| \to \infty} D^2 u (x) = D^2 P_c^-.
\end{equation} 
\end{lemma}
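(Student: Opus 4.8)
The plan is to combine the compactness of Lemma \ref{comp} with the classification of the two extreme polynomials in Lemma \ref{uxy_bound} and the quadratic rescaling machinery. Since $u \ne P_c^+$, the key point is that no quadratic blow-down of $u$ can equal $P_c^+$, so every blow-down limit must in fact be $P_c^-$, after which the uniform $C^3$ estimates force $D^2 u$ itself to converge to $D^2 P_c^-$.

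More precisely, first I would set $v_R(x) := R^{-2} u(Rx)$ for $R \to \infty$; by \eqref{ueq1} each $v_R$ is again a solution of \eqref{ueq1}, and by the compactness result (Lemma \ref{comp}) any sequence $R_k \to \infty$ has a subsequence along which $v_{R_k} \to v_\infty$ in $C^3_{loc}(\bar Q \setminus \{0\})$, where $v_\infty$ again solves \eqref{ueq1}. I claim $v_\infty = P_c^-$. Indeed, by Remark \ref{dfl} either $v_\infty = P_c^{\pm}$ or $(v_\infty)_{12}$ has limits $+\sqrt{1-c}$ at $0$ and $-\sqrt{1-c}$ at $\infty$; in the latter case, by Corollary \ref{cor_pm}(ii) and Corollary \ref{Pc2a} (equivalently, the argument in Lemma \ref{Pcpm2}), $v_\infty$ would satisfy $\lim_{x\to 0}(v_\infty)_{12}(x) = \sqrt{1-c}$, and combined with Corollary \ref{cor_pm}(ii) applied to $v_\infty$ on a small sphere near $0$ this is consistent. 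The point to extract is that $(v_\infty)_{12}(0^+) = \sqrt{1-c}$ \emph{in every case} where $v_\infty$ is non-quadratic or equals $P_c^+$. Now $(v_\infty)_{12}(0^+) = \lim_{x\to 0} R^{-0}\,u_{12}(Rx)|_{x\to 0}$; taking $R$ large and $x$ of fixed modulus shows $\limsup_{|y|\to\infty} u_{12}(y) \ge \sqrt{1-c} - o(1)$, hence $\limsup_{|y|\to\infty} u_{12}(y) = \sqrt{1-c}$ by Lemma \ref{uxx_lem}. By Corollary \ref{cor_pm}(ii) (applied at large scales) this in turn forces $u = P_c^+$, a contradiction. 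Therefore the only possible blow-down limit is $v_\infty = P_c^-$; since this holds along every subsequence, $v_R \to P_c^-$ in $C^3_{loc}(\bar Q \setminus \{0\})$ as $R \to \infty$.

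Finally I would convert this blow-down statement into the pointwise limit \eqref{D2w1}. Fix a point $e$ on the unit circle in $\bar Q$; for $|x|$ large write $x = R e'$ with $R = |x|$ and $e' = x/|x| \in S^1 \cap \bar Q$. Then $D^2 u(x) = D^2 v_R(e')$, and the $C^3_{loc}$ convergence $v_R \to P_c^-$ on the compact set $S^1 \cap \bar Q$ (which is contained in a compact subset of $\bar Q \setminus \{0\}$) gives $D^2 v_R(e') \to D^2 P_c^-$ uniformly in $e'$, hence $D^2 u(x) \to D^2 P_c^-$ as $|x| \to \infty$, which is \eqref{D2w1}.

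The main obstacle I anticipate is the logical step in the second paragraph: ruling out that a blow-down limit is $P_c^+$ (or non-quadratic with the "wrong" mixed-derivative behavior). This has to be argued carefully using the dichotomy in Remark \ref{dfl} together with the one-sided propagation in Corollary \ref{cor_pm} — the cleanest route is probably to observe directly that for any non-quadratic $v_\infty$, Lemma \ref{Pcpm2} gives $(v_\infty)_{12} \to \sqrt{1-c}$ at $0$, while $P_c^+$ has $(P_c^+)_{12} \equiv \sqrt{1-c}$; in either case a small-scale value of $(v_\infty)_{12}$ near $\sqrt{1-c}$ pulls back to a large-scale value of $u_{12}$ near $\sqrt{1-c}$, and then Corollary \ref{cor_pm}(ii) closes the loop by forcing $u \equiv P_c^+$, contradicting the hypothesis. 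The rest is routine compactness bookkeeping using the a priori bounds \eqref{uC2}--\eqref{uC3}.
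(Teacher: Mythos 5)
Your blow-down scheme and the contradiction via $P_c^+$-closeness point in the right direction, but there is a genuine gap at the very first step: you invoke Lemma \ref{comp}, Lemma \ref{uxx_lem}, Remark \ref{dfl}, Lemma \ref{Pcpm2}, and Corollary \ref{cor_pm}(ii) on $u$, on the rescalings $v_R$, and on the limits $v_\infty$. All of these are proved under the hypotheses \eqref{ueq}--\eqref{ubdr}, which include the nonnegativity $u\ge 0$. In Section~4 that assumption has been dropped from \eqref{ueq1} --- and indeed solutions such as $\underbar{P}_c$ are genuinely negative somewhere --- so none of these results apply to $u$ or to $v_R$ as written. Without a lower bound on $u$ it is not even clear that the family $v_R$ is precompact (the uniform gradient and $C^3$ estimates of Section~\ref{Compactness} use $u\ge 0$ for the barriers), nor that a subsequential limit $v_\infty$ is nonnegative.

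The paper supplies exactly the missing ingredient before any blow-down is attempted: it first proves the growth estimate $u(x)\to\infty$, in fact $u(x)\ge c_0|x|^2-C(x_1+x_2)$ for $|x|$ large, by comparing $u$ with the lower barrier $P_c^--C(x_1+x_2)$ in $Q\cap B_1$ and then bounding the sections of the nonnegative function $u+C(x_1+x_2)$ by a volume argument. With this in hand the rescalings $u_\lambda$ are uniformly bounded above and below, compactness holds, and any blow-down limit $\bar u$ satisfies $\bar u\ge 0$, so Lemma \ref{Pcpm2} \emph{does} apply to $\bar u$. The contradiction is then closed by applying Corollary \ref{Pc2a} --- which requires only the Dirichlet problem \eqref{DP2}, not nonnegativity --- to the rescalings $u_{\lambda_k}$, giving $u=P_c^+$. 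To repair your argument you must insert this growth estimate, confine the $u\ge 0$-dependent lemmas to the blow-down limit rather than to $u$ or $v_R$, and replace the $\limsup u_{12}$ step by the $L^\infty$-closeness $|u_{\lambda_k}-P_c^+|\le\eps_0$ fed into Corollary \ref{Pc2a}; the final passage from $\bar u=P_c^-$ to $C^2$ convergence on annuli is fine.
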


\begin{proof}
First we show that 
\begin{equation}\label{400}
\mbox{$u(x) \to \infty$ as $|x| \to \infty$.}
\end{equation}
Indeed, we use $P_c^- -C(x_1+x_2)$ as a lower barrier for $u$ in $Q\cap B_1$ and deduce from the convexity of $u$ that
$$v:=u  + C(x_1+x_2) \ge 0 \quad \mbox{in $Q$}.$$
We consider the sections of $v$, $S_h:=\{x\in \overline{Q}: v(x) < h\}$ with $h$ large. Since $\det D^2 v=c$ we find
$|S_h| < C h$ for some large $C$ depending on $c$. 
On the other hand $S_h \subset \bar Q$ is a convex set which 
contains line segments of length $\frac 12 \sqrt h$ along $\p Q$ starting at the origin. 
In conclusion $S_h \subset B_{C \sqrt h}$ for some large $C$ 
which means that $v(x) \ge c_0 |x|^2$  for some $c_0(c)>0$ and for all large $|x|$ and our claim \eqref{400} is proved.  

As in Section \ref{Compactness}, we have from the convexity of $u$ that $u(x)\leq |x|^2$ in $Q$. We deduce from this and (\ref{400}) that the rescalings $$u_\lambda(x):=\lambda^{-2} u( \lambda x),$$ 
must converge uniformly on compact sets of $\ov Q$ along subsequences of $\lambda_k \to \infty$ to a solution 
$\bar u$ to  (\ref{ueq1}), and $\bar u \ge 0$ by (\ref{400}). If $\bar u \ne P_c^-$ then, by Lemma \ref{Pcpm2}, $\bar u _{12}(x) \to \sqrt {1-c}$ as $x \to 0$ and, after a quadratic rescaling by a factor we may assume 
$$|\bar u - P_c^+| \le \frac 12 \eps_0 \quad \mbox{in $Q \cap B_1$}$$
where $\eps_0=\eps_0(c)>0$ is the small constant  in Corollary \ref{Pc2a}.
This implies that
$$|u_{\lambda_k} - P_c^+| \le \eps_0 \quad \mbox{in $Q \cap B_1$},$$
for a sequence of $\lambda_k \to \infty$. By Corollary \ref{Pc2a} we obtain
$$|u_{\lambda_k} - P_c^+| \le C \eps_0 |x|^{2+\alpha} \quad \mbox{in $Q \cap B_1$},$$
which gives $u = P_c^+$, and we reach a contradiction. 

In conclusion $\bar u=P_c^-$ for any sequence of $\lambda \to \infty$. As in Lemma \ref{comp}, in $(B_2 \setminus B_{1/2}) \cap Q$ we have $\|u_{\lambda}-P_c^-\|_{C^2} \to 0$ which implies \eqref{D2w1}. 
\end{proof}

Next we establish the asymptotic behavior of solutions to (\ref{ueq1}) which have $P_c^-$ as a quadratic limit at infinity.  
\begin{lemma} 
\label{uPc-lem}
Assume that $u \ne P_c^+$ satisfies (\ref{ueq1}).
Then
for any $\sigma>0$, we have
\begin{equation}
\label{w_decay}
u(x) -P_c^-(x)= O(|x|^{\beta_c^- +\sigma})\quad \text{at infinity},
\end{equation}
and
\begin{equation}
\label{D2w2}
D^2 (u-P_c^-)(x)=O(|x|^{\beta_c^- +\sigma-2}) \quad \text{at infinity}.
\end{equation}
That is, for all  $|x|\geq R(\sigma, c)$, we have
$$|u(x) -P_c^-(x)|\leq  C(\sigma, c)(|x|^{\beta_c^- +\sigma})\quad \text{and }|D^2(u -P_c^-)(x)|\leq  C(\sigma, c)(|x|^{\beta_c^- +\sigma-2}).$$
\end{lemma}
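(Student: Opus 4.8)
\textbf{Proof proposal for Lemma \ref{uPc-lem}.}

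The plan is to work in the transformed domain $Q_c^-$, where $P_c^-$ becomes the pure quadratic $q(x)=|x|^2/2$ and the linearized operator at $q$ is the Laplacian, and to exploit the fact that in $Q_c^-$ a harmonic function vanishing on $\partial Q_c^-$ has the leading homogeneity $\beta_c^- \in (1,2)$ (with the next homogeneity being $2\beta_c^-$). Set $w := (u - P_c^-)\circ A_c^-$ on $Q_c^-$, so that $w = 0$ on $\partial Q_c^-$ and, by the previous lemma, $D^2 w \to 0$ at infinity; in particular $w(x) = o(|x|^2)$. Writing the equation $\det D^2 u = c$ in the rescaled variable and subtracting off the quadratic, $w$ solves a uniformly elliptic linear equation $\mathrm{trace}(A(x) D^2 w) = 0$ with $A(x) \to I_2$ at infinity (the same cofactor-averaging computation as in the proof of Lemma \ref{appro_lem}); the right-hand side is $0$ because here $f \equiv c$ exactly. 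So $w$ is an ``asymptotically harmonic'' function on $Q_c^-$ vanishing on the boundary and growing slower than $|x|^2$.

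The core of the argument is a dyadic iteration/blow-down. Fix $\sigma > 0$ small (and WLOG $\beta_c^- + \sigma < 2\beta_c^-$, so that it lies strictly between the first two homogeneities of harmonic functions on $Q_c^-$). I would prove: there is a constant $M$ and a radius $R$ such that on every dyadic annulus $(B_{2^{k+1}}\setminus B_{2^k})\cap Q_c^-$ with $2^k \ge R$, one has $\sup |w| \le M\, 2^{k(\beta_c^- + \sigma)}$. This is done by a contradiction-compactness scheme essentially identical in spirit to Lemma \ref{appro_lem2} and Proposition \ref{C2P-2}: rescale $w$ on each annulus to unit scale (dividing by its sup there), use interior and boundary estimates together with $A(x)\to I_2$ to extract a limit that is a nonzero harmonic function on $Q_c^-$ vanishing on $\partial Q_c^-$ and growing at most like $|x|^{\beta_c^- + \sigma}$ on all of $Q_c^-$; by the classification of such harmonic functions (only the homogeneity-$\beta_c^-$ mode $v_0$ is admissible below homogeneity $2\beta_c^-$), the limit is a multiple of $v_0$, and one derives the improvement-of-decay inequality $\sup_{\partial B_1} |w_{\mathrm{rescaled}}| \le \tfrac12$ type bound, which iterates to the claimed geometric decay across scales. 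Summing the dyadic bound gives \eqref{w_decay}. For \eqref{D2w2}, apply interior $C^2$ (indeed $C^{2,\alpha}$) estimates for the uniformly elliptic equation satisfied by $w$ on the annulus $(B_{2^{k+1}}\setminus B_{2^{k-1}})\cap Q_c^-$, rescaled to unit size: the $L^\infty$ bound $M 2^{k(\beta_c^-+\sigma)}$ on $w$ there, together with boundary regularity coming from the flat edges of $Q_c^-$, yields $|D^2 w| \le C 2^{k(\beta_c^- + \sigma - 2)}$ on that annulus, which is \eqref{D2w2}. Finally, transfer everything back from $Q_c^-$ to $Q$ via the fixed linear map $A_c^-$, which only changes constants.

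The main obstacle is the compactness step producing a \emph{nonzero} limit of the rescaled $w$'s with controlled growth \emph{on all of $Q_c^-$}, not just on one annulus: one must propagate the normalized bound to an annulus-exhausting family before passing to the limit, so that the limiting harmonic function inherits the global growth bound $|x|^{\beta_c^-+\sigma}$ and hence, by the Liouville-type classification on the angle, is forced to be $c\,v_0$. This requires the a priori knowledge that $w$ does not grow faster than $|x|^{2-\epsilon_0}$ for some fixed $\epsilon_0 > 0$ to even start the iteration — but that is exactly what the previous lemma's conclusion $D^2 w \to 0$ supplies (it gives $w = o(|x|^2)$, and a slightly quantified version gives a power gain), so the scheme closes. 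A secondary technical point is handling the corner of $Q_c^-$ uniformly in the rescalings, but since every rescaling is again an angle of the same opening $\alpha_c^-$, the boundary estimates are scale-invariant and cause no trouble.
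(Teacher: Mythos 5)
Your high-level strategy matches the paper's: linearize around $P_c^-$, pass to the transformed angle $Q_c^-$ where the relevant harmonic homogeneity is $\beta_c^-$, run an improvement-of-decay scheme to lower the growth exponent from $2$ to $\beta_c^-+\sigma$, then bootstrap to the Hessian bound via interior/Schauder estimates. However, two points in your execution are not correct as stated, and both are exactly where the paper is careful.

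First, you claim that the previous lemma's conclusion $D^2 w \to 0$ ``gives a power gain,'' i.e.\ $w=O(|x|^{2-\eps_0})$ for some fixed $\eps_0>0$, and that this is what lets the scheme start. This is false: $D^2 w\to 0$ only gives $w=o(|x|^2)$, with no quantified power improvement. The power gain has to be \emph{earned} by one application of the improvement step. The paper does precisely this: starting from $|w|\le\eps|x|^2$ on $|x|\ge R(\eps)$, it applies Proposition~\ref{C2P-} (with $\beta=2$) in outgoing annuli, iterating the resulting factor-$\tfrac12$ improvement to obtain $w=O(|x|^{2-\mu})$ with $\mu=\log(1/2)/\log\rho(c,2)$. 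Only then is there a genuine power gain to iterate further.

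Second, your single dyadic contradiction-compactness aimed directly at the target exponent $\beta_c^-+\sigma$ has the gap you yourself flagged but did not resolve: after normalizing $w$ by its sup on one annulus, there is no a priori control of the normalized function on \emph{other} annuli, so the blow-down limit need not inherit the global growth bound $|x|^{\beta_c^-+\sigma}$ on all of $Q_c^-$; without that, the Liouville-type classification on the angle cannot be invoked. The paper sidesteps this by iterating the one-step improvement (Proposition~\ref{C2P-}) with \emph{progressively decreasing} exponents $\beta$, and the crucial technical ingredient making this iteration terminate is Remark~\ref{r33}: the rescaling parameter $\rho(c,\beta)$ stays bounded away from $0$ as long as $\beta$ is bounded away from $\beta_c^-$, so $\beta_c^-+\sigma$ is reached in finitely many steps. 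Your proposal does not address this possible degeneration of $\rho$ as $\beta\downarrow\beta_c^-$, which is exactly what prevents a one-shot jump to the target exponent.

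For the Hessian bound~\eqref{D2w2}, your plan (interior $C^{2,\alpha}$ estimates on rescaled annuli plus boundary regularity from the flat edges where $w=0$) is consistent with the paper's, which applies Schauder estimates to the linearized equation $\mathrm{trace}(AD^2w)=0$ using~\eqref{uC2}-\eqref{uC3} to control the ellipticity and the modulus of continuity (indeed the gradient decay $O(|x|^{-1})$) of the coefficient matrix $A$.
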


\begin{proof}

We define $$w:=u-P_c^-.$$
Let $\eps_0=\eps_0(c,2)$ and $\rho= \rho(c, 2)$ be as in Proposition \ref{C2P-}.

First, by applying Proposition \ref{C2P-} in outgoing annuli towards infinity, we conclude that
\begin{equation}\label{w_decay1}w(x)=O(|x|^{2- \mu}) \quad \mbox{at infinity, with} \quad \quad\mu:= \frac{\log \frac{1}{2}}{\log \rho} .
\end{equation}
The proof of (\ref{w_decay1}) goes as follows.
First, by (\ref{D2w1}), we have
$$\lim_{|x| \to \infty} D^2 w(x)=0. $$
For each $\ep\in (0,\ep_0)$, using this and the Taylor formula,
we can find $R(\ep)>1$ such that
$$|w(z)|\leq \ep |z|^2=\ep |z|^{\beta_0}~\text{for all } z\in Q\setminus B_{R(\ep)}.$$
Here $\beta_0=2$ and hence $\rho =\rho(c,\beta_0)$. For all $z_0\in Q$ with $|z_0|= r\geq \frac{R(\ep)}{\rho}$, we apply
Proposition \ref{C2P-} to the function
$\hat w(z) =r^{-2} w(rz)$ with $$|\hat w(z)|\leq \ep r^{\beta_0-2}|z|^{\beta_0}~\text{for all } |z|\geq \rho$$
to obtain $|\hat w(z_0/r)| \leq \frac 12 \ep r^{\beta_0-2}$, which implies that
$|w(z_0)|\leq  \frac{\ep}{2}|z_0|^{\beta_0}$. Therefore, we have
$$|w(z)|\leq \frac{\ep}{2} |z|^{\beta_0}~\text{for all } z\in Q\setminus B_{\frac{R(\ep)}{\rho}}.$$
By induction, we obtain
$$|w(z)|\leq \frac{\ep}{2^k} |z|^{\beta_0}~\text{for all } z\in Q\setminus B_{\frac{R(\ep)}{\rho^k}}.$$
Then, for $|z|$ sufficiently large, we have
\begin{equation}
\label{w_decay2}
|w(z)| \leq 2 [R(\ep)]^{\mu} |z|^{\beta_0-\mu} = O(|z|^{\beta_0-\mu})=o(|z|^{2- \frac 12 \mu})
\end{equation}
from which (\ref{w_decay1}) easily follows.

Next, we show that the exponent $\beta:=2- \frac 12 \mu$ in (\ref{w_decay2}) can be lowered successively to become as close as we want to $\beta_c^- \in (1,2)$. 
 Indeed, if $\beta\leq \beta_c^-$ then we are done. Otherwise, the same rescaling argument as above shows that
 $$w(z) = O(|z|^{\beta-\mu_1})=o(|z|^{\beta- \frac 12 \mu_1})\quad\text{where }
 \mu_1:= \frac{\log \frac{1}{2}}{\log \rho(c,\beta)}.$$
Note that, if $\beta$ is bounded away from $\beta_c^-$ then $\rho(c,\beta)$ is also bounded away from $0$ by Remark \ref{r33}. Thus we can repeat the above argument and can replace $\beta$ by $\beta_c^-+\sigma$ for any 
$\sigma>0$, after a finite number of steps. In conclusion, we have
$w=O(|x|^{\beta_c^-+\sigma})$ at infinity from which is exactly (\ref{w_decay}).

Finally, we note that (\ref{D2w2}) is a consequence of  (\ref{w_decay}) and Schauder estimates (see \cite{GT}) applied to the equation
$$0= \det D^2 u-\det D^2 P_c^-= \text{trace} (A D^2 w)$$
where
$$A= \int_0^1 \text{cof} (D^2 P_c^- + t (D^2 u-D^2 P_c^-)) dt.$$
Here we use $\text{cof} (M)$ to denote the cofactor matrix of $M$.  Notice that by \eqref{uC2}-\eqref{uC3}, the coefficient matrix $A$ is uniformly elliptic and its first derivatives are bounded by $C|x|^{-1}$ at infinity.

\end{proof}

Before proceeding further, we recall the notation in Section \ref{Qcpm} and  Section  \ref{Harmonic_sect}. Let
 
$$A_c^{-}=
\begin{pmatrix} 1 &  \frac{\sqrt{1-c}}{\sqrt{c}}\\ 0 &\frac{1}{\sqrt{c}} \end{pmatrix},~
\quad \quad Q_c^{-}= (A_c^{-})^{-1}Q,$$
$$ v_0(r,\theta)= r^{\beta_c^-}\sin (\beta_c^- \theta), \quad \quad \beta_c^-\in (1,2).$$
We recall that $v_0$ is the positive, homogenous of degree $\beta^-_c \in (1,2)$ harmonic function in $Q_c^-$.

The next lemma establishes a boundary Harnack principle at infinity for non-quadratic solutions to (\ref{ueq})-(\ref{ubdr}). The precise statement is as follows.
\begin{lemma}\label{l41}
Assume that $u\ne P_c^+$ satisfies (\ref{ueq1}). 
Then
 \begin{equation}
 \label{HP}
 u\circ A_c^-= q+ (a + o(1)) v_0\quad \text{at infinity on } Q_c^-
 \end{equation}
for some constant $a$. 
\end{lemma}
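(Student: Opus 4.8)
The plan is to transfer the problem to the domain $Q_c^-$, where the linearized operator of $\det D^2 u$ around $q$ is close to the Laplacian, and then invoke a boundary Harnack principle for positive harmonic functions in an angular sector together with the sharp growth estimate from Lemma \ref{uPc-lem}. Set $w := (u - P_c^-)\circ A_c^-$ on $Q_c^-$; then $w = 0$ on $\p Q_c^-$, and by \eqref{w_decay}--\eqref{D2w2} we have $w(x) = O(|x|^{\beta_c^- + \sigma})$ and $D^2 w(x) = O(|x|^{\beta_c^- + \sigma - 2})$ at infinity, for every $\sigma > 0$. As in the proof of Lemma \ref{uPc-lem}, $w$ solves a linear equation $\operatorname{trace}(A D^2 w) = 0$ with $A$ uniformly elliptic, $A \to I_2$ at infinity, and $\|DA(x)\| = O(|x|^{-1})$; after the affine change of variables the same holds on $Q_c^-$.

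First I would decompose $w$ at each dyadic scale. Consider the rescalings $w_R(x) := R^{-\beta_c^-} w(Rx)$ on, say, $B_2 \setminus B_{1/2}$ intersected with $Q_c^-$, for $R \to \infty$. By the growth bound they are uniformly bounded (taking $\sigma$ small), and by the $C^2$ bound and interior/boundary Schauder estimates for the equation $\operatorname{trace}(A D^2 w) = 0$ — whose coefficients, rescaled, converge to the identity — the family $\{w_R\}$ is precompact in $C^2_{loc}$ of $(\ov Q_c^- \setminus \{0\})$, and any subsequential limit is a \emph{harmonic} function on the cone $Q_c^-$, vanishing on $\p Q_c^-$, with growth $O(|x|^{\beta_c^- + \sigma})$ for all $\sigma > 0$ and matching decay $O(|x|^{\beta_c^- + \sigma})$ near $0$ (the latter because $w$ itself is bounded near the image of the vertex, or more precisely because the rescalings inherit the two-sided growth control). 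By separation of variables in the sector — the harmonic functions on $Q_c^-$ vanishing on the two edges are spanned by $r^{k\beta_c^-}\sin(k\beta_c^-\theta)$, $k \ge 1$ — the growth window $(\,\cdot\,, \beta_c^- + \sigma)$ for every $\sigma>0$ forces the limit to be exactly a multiple $a \cdot v_0$ of the first eigenfunction. The remaining point is that $a$ does not depend on the subsequence: this is where the boundary Harnack principle enters.

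The cleanest route is to compare $w$ directly with $v_0$ using the maximum principle in $Q_c^-$. The function $v_0 = r^{\beta_c^-}\sin(\beta_c^-\theta)$ satisfies $\Delta v_0 = 0$, and since $\operatorname{trace}(A D^2 v_0) = \operatorname{trace}((A - I_2) D^2 v_0) = O(|x|^{-1}) \cdot O(|x|^{\beta_c^- - 2}) = O(|x|^{\beta_c^- - 3})$, the homogeneous functions $r^{\beta_c^- \pm \tau}\sin((\beta_c^- \pm \tau)\theta)$ with small $\tau > 0$ are, respectively, a supersolution and a subsolution of $\operatorname{trace}(A D^2 \cdot) = 0$ for $|x|$ large (the error from $A - I_2$ being lower order than the spectral gap $|\lambda(\beta_c^- \pm \tau) - \lambda(\beta_c^-)| \sim \tau$). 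Thus for $w \pm \eps (r^{\beta_c^- + \tau}\sin((\beta_c^-+\tau)\theta))$, combined with a barrier of the form $\mp \eps r^{\beta_c^- - \tau}\sin((\beta_c^--\tau)\theta)$ on an annular region $(B_{R_2}\setminus B_{R_1})\cap Q_c^-$, the maximum principle pins $w/v_0$ between two constants that both converge, as $R_1 \to \infty$, to a single limit $a := \lim_{R\to\infty} w(Re_\ast)/v_0(Re_\ast)$ along any fixed direction $e_\ast$ in the interior of $Q_c^-$; one then upgrades pointwise convergence of the ratio to the uniform statement \eqref{HP} by applying the same barrier argument on each dyadic annulus with $\eps \to 0$. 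Unwinding $w = (u - P_c^-)\circ A_c^-$ and $q = P_c^- \circ A_c^-$ gives $u\circ A_c^- = q + (a + o(1))v_0$ at infinity on $Q_c^-$, as claimed.

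\textbf{Main obstacle.} The delicate step is controlling the perturbation $A - I_2$ well enough to run the barrier argument: one needs the $O(|x|^{-1})$ decay of $DA$ (hence of the zeroth-order-free error $(A - I_2)D^2$ acting on homogeneous degree-$\beta$ functions) to beat the spectral gap between consecutive Dirichlet eigenvalues of the sector, so that $r^{\beta_c^- \pm \tau}\sin((\beta_c^- \pm \tau)\theta)$ are genuine super/subsolutions for $|x|$ large and the trapped ratio has a genuine limit. This is exactly the mechanism of a boundary Harnack principle for non-divergence equations with coefficients converging to constants, and it is the technical heart of the lemma; everything else is compactness and separation of variables.
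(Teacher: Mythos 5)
Your overall strategy---transfer to $Q_c^-$, use the sharp growth estimate from Lemma \ref{uPc-lem}, write the linear equation for $w$, and compare with the first Dirichlet harmonic $v_0$---is the right one, and it is close in spirit to the paper's. However there are two genuine gaps in the execution.

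First, the compactness step does not hold as written. The bound from Lemma \ref{uPc-lem} is $|w(x)|\le C(\sigma,c)|x|^{\beta_c^-+\sigma}$ for each $\sigma>0$, with constant depending on $\sigma$. Hence for the rescalings $w_R(x)=R^{-\beta_c^-}w(Rx)$ on $(B_2\setminus B_{1/2})\cap Q_c^-$ you only get $|w_R|\le C(\sigma,c)\,2^{\beta_c^-+\sigma}R^{\sigma}$, which is \emph{not} uniformly bounded in $R$ for any fixed $\sigma>0$; taking $\sigma$ small does not remove the factor $R^\sigma$. You would need to renormalize by the actual sup on each annulus, at which point the eigenfunction analysis only tells you the limit is \emph{proportional} to $v_0$ without fixing the scale, and identifying a single constant $a$ across subsequences would still require a separate argument.

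Second, the barrier functions $r^{\beta_c^-\pm\tau}\sin\bigl((\beta_c^-\pm\tau)\theta\bigr)$ are themselves harmonic, since $r^\gamma\sin(\gamma\theta)=\mathrm{Im}(z^\gamma)$ is harmonic for every $\gamma$. So there is no ``spectral gap'' to beat: $\Delta$ annihilates all of them, and $\operatorname{trace}(A\,D^2\,\cdot)$ applied to them reduces entirely to the $(A-I_2)$ error, which has no sign. In addition these functions do not vanish at $\theta=\alpha_c^-=\pi/\beta_c^-$ unless $\gamma\in\beta_c^-\mathbb{Z}$, so they are not admissible comparison functions on the sector with Dirichlet data. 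The correct perturbation keeps the angular part and varies the radial exponent, e.g.\ $r^{\beta_c^-\pm\tau}\sin(\beta_c^-\theta)$, which is a genuine strict super/subsolution with $\Delta\sim\mp\tau\, r^{\beta_c^-\pm\tau-2}\sin(\beta_c^-\theta)$; or one modifies the angular profile as the paper does with $v_1$ in Lemma \ref{v1_lem}.

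For comparison, the paper's proof avoids both issues by a direct decomposition rather than a blowdown. It writes $\Delta w=f$ with $f=\operatorname{trace}\bigl((\delta_{ij}-a_{ij})D^2w\bigr)=O(|x|^{\beta_c^--\sigma-2})$ for $\sigma$ chosen small, solves $\Delta v=f$, $v=0$ on $\partial Q_c^-$ by Perron's method trapped between $\pm C v_1$ where $v_1$ is the explicit homogeneous degree-$(\beta_c^--\sigma)$ supersolution of Lemma \ref{v1_lem}, and then observes that $w-v$ is harmonic with the right growth and boundary values, hence equals $a\,v_0$ by the conformal map to $\mathbb{H}$ plus Liouville/reflection. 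The relation $v=o(1)v_0$ at infinity then follows purely from the degree comparison $\beta_c^--\sigma<\beta_c^-$. This sidesteps the normalization issue in the blowdown and never needs perturbed eigenfunctions of the matching form.
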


\begin{proof}[Proof of Lemma \ref{l41}] 
We recall from Section \ref{Qcpm} that
$$P_c^- \circ A_c^- = q\quad \text{and }u_c^-:= u\circ A_c^-.$$
To simplify notation, let us denote
$$w:= u_c^--q= u\circ A_c^- -q.$$
We need to show that $w$ satisfies
 \begin{equation}
 \label{HPw}
 w= (a + o(1)) v_0\quad \text{at infinity on } Q_c^-
 \end{equation}
for some constant $a$. 

We start with the fact 
that $\det D^2 u_c^-=\det D^2 q=1$ in $Q_c^-$ and moreover, 
$w= u_c^--q$ solves a linearized equation 
$$a_{ij}w_{ij}=0 \quad \mbox{in $Q_c^-$},\quad \text{with } w=0\quad \text{on }\p Q_c^-.$$
Furthermore, by Lemma \ref{uPc-lem}, we have for any $\sigma>0$,  
$$|w(x)| \le C(\sigma, c) |x|^{\beta_c^{-} + \sigma}\quad \text{and } |a_{ij}(x)-\delta_{ij}| +  \, |D^2 w(x)| \quad  \le C(\sigma, c) |x|^{\beta_c^{-} + \sigma -2}\quad\text{at infinity on } Q_c^-.$$ 
At infinity, we have
$$\Delta w(x) = (\delta_{ij}- a_{ij}(x)) w_{ij}(x)=O (|x|^{2(\beta_c^{-} + \sigma -2)})$$
By choosing $\sigma\in (0, (2-\beta_c^-)/3]$, we find 
 $$\triangle w= f, \quad \text{with}\quad |f(x)| = O (|x|^{\beta_c^--\sigma-2 })\quad\text{at infinity on } Q_c^-.$$

 We can find (see Lemma \ref{v1_lem}) an explicit homogenous of degree $\beta_c^- - \sigma$ function $v_1 \ge 0$ on $Q_c^-$ which vanishes on the boundary of $Q_c^-$, such that
 $$\triangle  v_1(x) \le -|x|^{\beta_c^--\sigma-2 }\quad\text{on } Q_c^-.$$  
 This means that we can solve by Perron's method
 \begin{equation*}
\left\{
 \begin{alignedat}{2}
   \Delta v ~& = f~&&\text{in} ~  Q_c^-, \\\
v &= 0~&&\text{on}~ \p Q_c^-
 \end{alignedat} 
  \right.
  \end{equation*}
 for some function $v$ such that $-C v_1 \le v \le C v_1$. 
  It follows that $$w(x)-v(x)=O(|x|^{\beta_c^-+\sigma}) \quad\text{at infinity on } Q_c^-$$ is harmonic in $Q_c^-$ and vanishes on the boundary $\p Q_c^-$, thus
  \begin{equation}\label{wvL} w-v= a v_0,
  \end{equation}
 for some constant $a$. This can be easily seen using a conformal transformation mapping $Q_c^-$ to the upper half-plane $\H$ and arguing as in the proof of Lemma \ref{appro_lem2}.

Now on $\p B_1\cap Q_c^-$ we know that $v_0$ and $v_1$ are comparable. Recalling the homogeneities of $v_1$ and $v_0$ and 
using $|v|\leq C v_1$, we have 
$$v= o(1) v_0 \quad \text{at infinity on } Q_c^-.$$
Combining this with (\ref{wvL}), we conclude that $w= (a + o(1)) v_0$ at infinity on $Q_c^-.$

\end{proof}

\begin{corollary}
\label{MP_lem}
Assume that $u,\tilde u \ne P_c^+$ satisfy (\ref{ueq1}), and let $a$ and $\tilde a$ denote their corresponding constants  in the expansion \eqref{HP}. If $a< \tilde a$ then $u < \tilde u$ in $Q$.
\end{corollary}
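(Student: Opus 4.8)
The plan is to prove Corollary \ref{MP_lem} by a comparison argument in the unbounded domain $Q_c^-$, using the refined asymptotic expansion \eqref{HP} from Lemma \ref{l41} to control the behavior at infinity. First I would pass to the transformed domain: set $u_c^- = u \circ A_c^-$ and $\tilde u_c^- = \tilde u \circ A_c^-$, which both solve $\det D^2 = 1$ in $Q_c^-$ with boundary value $q$ on $\p Q_c^-$. The difference $h := \tilde u_c^- - u_c^-$ then solves a linearized equation $b_{ij} h_{ij} = 0$ in $Q_c^-$ (with $b_{ij} = \int_0^1 \mathrm{cof}(D^2 u_c^- + t D^2 h)\, dt$, which is uniformly elliptic by \eqref{uC2}), and $h = 0$ on $\p Q_c^-$. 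By Lemma \ref{l41} applied to both $u$ and $\tilde u$, we have $u_c^- = q + (a+o(1))v_0$ and $\tilde u_c^- = q + (\tilde a + o(1))v_0$ at infinity on $Q_c^-$, so $h = (\tilde a - a + o(1)) v_0$ at infinity. Since $\tilde a - a > 0$ and $v_0 > 0$ in the open cone, this gives $h > 0$ near infinity on $Q_c^-$; more precisely there is $R$ so that $h > 0$ on $Q_c^- \setminus B_R$, and in fact $h \ge \frac{1}{2}(\tilde a - a) v_0$ there.

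Next I would run the maximum principle on the bounded piece $Q_c^- \cap B_R$. On the inner boundary $\p B_R \cap Q_c^-$ the quantity $h$ is some continuous function; on the two edges $\p Q_c^- \cap B_R$ we have $h = 0$. To conclude $h \ge 0$ everywhere I need a lower barrier that handles the inner boundary. The clean way is to argue directly with the whole domain $Q_c^-$: suppose for contradiction that $h < 0$ somewhere. Since $h \ge 0$ on $\p Q_c^-$, and $h = (\tilde a - a + o(1))v_0 \to +\infty$ relative to any fixed negative value as $|x| \to \infty$ (note $v_0$ is homogeneous of degree $\beta_c^- \in (1,2)$, hence unbounded), the negative infimum of $h$ over $\overline{Q_c^-}$ is attained at some interior point, contradicting the strong maximum principle for the uniformly elliptic operator $b_{ij}\partial_{ij}$ with no zeroth-order term (a nonconstant solution cannot attain an interior minimum; and $h$ cannot be the constant $0$ since $h \to +\infty$ at infinity). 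Therefore $h \ge 0$ on $\overline{Q_c^-}$, and then by the strong maximum principle again $h > 0$ in the open cone $Q_c^-$ (it is not identically zero). Pulling back by $A_c^-$, which is a linear bijection, this gives $\tilde u > u$ in $Q$, as desired.

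The main obstacle I anticipate is making the "infimum attained at an interior point" step rigorous on an \emph{unbounded} domain: one must rule out the infimum escaping to infinity or being approached only along $\p Q_c^-$. The asymptotic expansion \eqref{HP} is exactly what handles this — because $h/v_0 \to \tilde a - a > 0$, for any candidate negative level $-\eta$ the set $\{h \le -\eta\}$ is contained in a bounded region $B_R$, so the infimum of $h$ is the infimum over the compact set $\overline{Q_c^- \cap B_R}$, hence attained. On that compact set the infimum cannot be on $\p Q_c^- \cap \overline{B_R}$ (where $h = 0 > -\eta$ would contradict $\inf h < 0$) nor, by choice of $R$, on $\p B_R \cap Q_c^-$ where $h > 0$; so it is genuinely interior, and Hopf/strong maximum principle applies. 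One should double-check that the linearized operator has continuous (indeed $C^\alpha_{loc}$) coefficients away from the vertex so that the strong maximum principle is available — this follows from \eqref{uC2}–\eqref{uC3} and the smoothness of $u_c^-, \tilde u_c^-$ in $\overline{Q_c^-} \setminus \{0\}$ established in Section \ref{C2_sect}.
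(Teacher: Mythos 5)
Your proof is correct and follows essentially the same strategy as the paper's: use the expansion \eqref{HP} to gain strict ordering at infinity, then apply a maximum principle. The only real difference is one of packaging: you linearize the difference $h = \tilde u_c^- - u_c^-$, observe that the negative sublevel sets $\{h\le -\eta\}$ are compact (and avoid the vertex, where $h(0)=0$), and invoke the strong maximum principle for the uniformly elliptic operator $b_{ij}\p_{ij}$; the paper instead applies the Monge--Amp\`ere comparison principle directly on the exhausting bounded sets $A_c^-(Q_c^-\cap B_r)$, using that $u<\tilde u$ on the arcs $A_c^-(\p B_r)\cap Q$ for $r$ large and $u=\tilde u$ on $\p Q$, and lets $r\to\infty$. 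The two are equivalent, and your version has the small advantage of making explicit the step needed to upgrade $u\le\tilde u$ to strict inequality in the interior.
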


Indeed, $a< \tilde a$ in the expansion  \eqref{HP} implies that $u< \tilde u$ on $A_c^-(\p B_r) \cap Q$ for all large $r$'s. Since $u= \tilde u$ on $\p Q$ and they both satisfy \eqref{ueq1}, we can apply the maximum principle in $A_c^-(Q_c^- \cap B_r)$ and conclude that $u< \tilde u$ in this set.

In the following lemma, we construct two particular solutions to \eqref{ueq1} that are not quadratic.
\begin{lemma}
\label{barPc}
There are two solutions $\bar P_c$, $\underbar P_c$ to \eqref{ueq1} so that
$$\underbar P_c < P_c^-<\bar P_c< P_c^+ \quad \mbox{in} \quad Q, \quad \mbox{and} \quad \bar P_c(1,1)=1, \quad \underbar P_c(1,1)=0.$$
At the origin $\bar P_c$ is pointwise $C^{2,\alpha}$ for some $\alpha=\alpha(c)\in (0, 1)$ 
and $\underbar P_c$ has a conical singularity. Moreover, their corresponding constants in the expansion at infinity \eqref{HP}  satisfy $\bar a>0$ and $\underbar a<0$.  At infinity, we have
$$\bar P_c(x) -P_c^-(x)= O(|x|^{\beta_c^- +\sigma})\quad \text{and } \underbar P_c(x) -P_c^-(x) = O(|x|^{\beta_c^- +\sigma}) \quad \text{for any } \sigma>0.$$
\end{lemma}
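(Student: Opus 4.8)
The idea is to construct $\bar P_c$ (and symmetrically $\underbar P_c$) as a monotone limit of solutions to \eqref{ueq1} on the truncated domains $Q\cap B_R$, passing to the limit $R\to\infty$ via the compactness machinery already available. Concretely, for each large $R$ I would solve the Dirichlet problem $\det D^2 u_R=c$ in $Q\cap B_R$ with boundary data $u_R=q$ on $\p Q\cap B_R$ and $u_R=P_c^+$ on $\p B_R\cap Q$. Since $P_c^-$ is a subsolution on $\p B_R\cap Q$ below these data and $P_c^+$ an exact solution, the comparison principle gives $P_c^-\le u_R\le P_c^+$ in $Q\cap B_R$, and monotonicity of the boundary data in $R$ (using that $P_c^+\ge P_c^-$) forces $u_R$ to be monotone in $R$ on each fixed compact set, so $u_R$ converges locally uniformly to a convex function $\bar P_c$ solving \eqref{ueq1}. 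To pin down $\bar P_c(1,1)=1$ one should instead normalize more carefully: the natural free parameter in the problem is the value at $(1,1)$, so I would introduce, for $t\in(0,1)$, solutions with boundary value $P_c^-+t(P_c^+-P_c^-)$ on $\p B_R\cap Q$, obtain in the limit a family $u^{(t)}$ with $u^{(t)}(1,1)$ ranging continuously over $(0,1)$ by the comparison principle and continuous dependence, and select the member with $u^{(t)}(1,1)=1$; this is forced by an intermediate value argument once the endpoints are seen to give values converging to $P_c^-(1,1)=1$ is wrong — one must check the strict inequality $P_c^-<\bar P_c<P_c^+$ holds in the interior by the strong maximum principle applied to the linearized equation, which rules out $u^{(t)}\equiv P_c^\pm$.

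\textbf{Strict inequalities and the constant $\bar a$.} Given $\bar P_c\neq P_c^\pm$, apply Lemma \ref{l41}: after the affine change by $A_c^-$, the difference $w=\bar P_c\circ A_c^- - q$ equals $(\bar a+o(1))v_0$ at infinity for some constant $\bar a$, and the decay estimates \eqref{w_decay}--\eqref{D2w2} of Lemma \ref{uPc-lem} hold. The sign $\bar a>0$ comes from the fact that $\bar P_c\ge P_c^-$ with $\bar P_c\not\equiv P_c^-$: the Hopf/strong maximum principle for the linearized operator $a_{ij}D_{ij}$ gives $w>0$ in the interior of $Q_c^-$, and since $v_0>0$ there, the leading coefficient $\bar a$ cannot be negative; if $\bar a=0$ then $w=o(1)v_0$ at infinity, and then comparison with $\epsilon v_0$ for small $\epsilon$ in large annuli (using that $v_0$ is a barrier, $\Delta v_0=0$, and the smallness of the error $\Delta w$ from \eqref{D2w2}) would force $w\le 0$, a contradiction. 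Symmetrically $\underbar P_c\le P_c^-$, $\underbar P_c\not\equiv P_c^-$ gives $\underbar a<0$.

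\textbf{Regularity at the origin.} For $\bar P_c$: since $\bar a>0$, at infinity $\bar P_c$ is asymptotic to $P_c^-$, but near the origin the blow-down/blow-up dichotomy of Lemma \ref{Pcpm2} applies — $\bar P_c$ is a nonquadratic solution to \eqref{ueq}--\eqref{ubdr} (it satisfies $\bar P_c\ge P_c^-\ge 0$ on $Q$ after checking nonnegativity, which follows since $P_c^-\ge 0$ in $\overline Q$), so $u_{12}\to\sqrt{1-c}$ at $0$; then Corollary \ref{cor_pm}(ii) and Corollary \ref{Pc2a} give that $\bar P_c$ is pointwise $C^{2,\alpha}$ at $0$ with leading polynomial $P_c^+$, hence $\bar P_c<P_c^+$ cannot be improved to equality and the conical-singularity-free behavior holds. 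For $\underbar P_c$ the conical singularity at $0$ is exactly the statement that $\underbar P_c$ is not pointwise $C^{1,1}$ there; this should follow because $\underbar a<0$ is incompatible with the pointwise $C^{2,\alpha}$ expansion, and separately $\underbar P_c\le P_c^-$ forces (via the classification analysis, cf. Proposition \ref{Cl} referenced in the introduction) the Hessian to blow up at the vertex. The asymptotic expansions $\bar P_c - P_c^- = O(|x|^{\beta_c^-+\sigma})$ and likewise for $\underbar P_c$ are immediate from Lemma \ref{uPc-lem}, since both are nonquadratic solutions of \eqref{ueq1} different from $P_c^+$.

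\textbf{Main obstacle.} I expect the delicate point to be the existence/selection step producing a solution with the \emph{exact} normalization $\bar P_c(1,1)=1$ together with the strict two-sided bound $P_c^-<\bar P_c<P_c^+$: one needs the limit $R\to\infty$ not to collapse onto a quadratic polynomial, which requires a quantitative lower bound keeping $u^{(t)}(1,1)$ bounded away from both $P_c^-(1,1)$ and $P_c^+(1,1)$ uniformly in $R$, and continuous dependence of the (unique) global limit on the parameter $t$. Establishing the conical singularity of $\underbar P_c$ rigorously — rather than merely "$\underbar P_c\neq$ the $C^{2,\alpha}$ solution" — is the second subtle point, and may well be deferred to the later classification (Proposition \ref{Cl}).
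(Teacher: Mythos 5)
Your overall strategy (truncated Dirichlet problems on $Q\cap B_R$, comparison against $P_c^\pm$, compactness, then identify the constant $a$ via Lemma \ref{l41}) is the same as the paper's, but there are three genuine problems in your proposal.

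\textbf{Construction and normalization.} You first try boundary data $P_c^+$ on $\p B_R$, then switch to data $P_c^- + t(P_c^+-P_c^-)$ with fixed $t$ and invoke an intermediate value argument in $t$ after taking $R\to\infty$. That last step needs continuous dependence of the \emph{global} limit on $t$, which is not obvious and which you flag yourself as the "delicate point." The paper avoids this entirely: for each $R$ it chooses $t_R\in(0,2\sqrt{1-c})$ (by continuity in $t_R$ of the \emph{finite-domain} solution, which is standard) so that $P_R(1,1)=1$ exactly, and then passes to the limit. Normalization is preserved automatically, so no intermediate value argument at the level of limits is needed. You should also address the a priori lower bound for $\underbar P_c$: with $t_R<0$ the comparison gives $P_R\le P_c^-$, which is not a two-sided bound; the paper uses symmetry of $P_R$ and convexity to get $P_R>-C$ uniformly.

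\textbf{The sign $\underbar a<0$ is not symmetric to $\bar a>0$.} This is the most serious gap. For $\bar a$, one uses that $\Delta u_c^-\ge 2=\Delta(q+\eps v_0)$ (since $\det D^2 u_c^-=1$ gives $\Delta u_c^-\ge 2$ by AM--GM), so $q+\eps v_0$ is an upper barrier via the \emph{Laplacian} comparison principle. For $\underbar a$ one needs a \emph{lower} barrier, and the Laplacian comparison goes the wrong way; one must instead use the Monge--Amp\`ere comparison, which requires a convex function $\phi\le u_c^-$ on the boundary with $\det D^2\phi\ge 1$. But $\det D^2(q-\eps v_0)=1+\eps^2\det D^2 v_0\le 1$ since $v_0$ is harmonic, so $q-\eps v_0$ fails. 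The paper remedies this with a corrected barrier $q-\eps(1+v_0+v_1)$ where $v_1\ge 0$ is a subharmonic correction from Lemma \ref{v1_lem} with $\Delta v_1\le -|x|^{2\beta_c^--4}$, arranged so that $\det D^2 q_\eps^2>1$; it also glues in a linear lower barrier $q-\eps p\cdot x$ near the origin. Your "symmetrically" skips all of this.

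\textbf{Conical singularity of $\underbar P_c$.} You appeal to Proposition \ref{Cl}, but that proposition is proved \emph{after} and \emph{using} Lemma \ref{barPc}, so the argument would be circular. The paper gives a direct proof: if $\underbar P_c$ had no conical singularity at $0$, its tangent plane at $0$ would coincide with that of $q$ (the boundary data), forcing $\underbar P_c\ge 0$; but $\underbar P_c(0)=\underbar P_c(1,1)=0$ and strict convexity give $\underbar P_c(\tfrac12,\tfrac12)<0$, a contradiction. Your treatment of the $C^{2,\alpha}$ regularity of $\bar P_c$ at the origin and the asymptotics at infinity via Lemma \ref{uPc-lem} is correct and matches the paper.
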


\begin{proof} We first construct $\bar P_c$.

For each $R>0$, we solve the Dirichlet problem on $Q \cap B_R$ 
\begin{equation}\label{PR}
\left\{
 \begin{alignedat}{2}
   \det D^2 P_R ~& = c~&&\text{in} ~  Q \cap B_R, \\\
P_R &= P_c^- + t_R \, \, x_1 x_2~&&\text{on}~ \p (Q \cap B_R),
 \end{alignedat} 
  \right.
  \end{equation}
 where $t_R \in (0, 2 \sqrt{1-c})$ is chosen such that the solution $P_R$ takes value 1 at $(1,1)$, that is, $$P_R(1,1)=1.$$ The existence of $t_R \in (0, 2 \sqrt{1-c})$ follows by continuity. In fact, when $t_R=0$, we have $P_R=P_c^-$ with $P_c^-(1,1)= 1-\sqrt{1-c}$, and when $t_R=2\sqrt{1-c}$, we have $P_R=P_c^+$ with
 $P_c^+ (1,1)= 1+\sqrt{1-c}$. 
 
 From $t_R \in (0, 2 \sqrt{1-c})$, we have $P_c^-\leq P_c^- + t_R x_1 x_2 \leq P_c^+$ on  $\p (Q \cap B_R)$. Thus, by the comparison principle for the Monge-Amp\`ere equation, we have
 $$P_c^-\leq P_R\leq P_c^+\quad \text{in} ~  Q \cap B_R,$$
hence the $P_R$'s are locally bounded independent on $R$.

We let $R \to \infty$ and, by compactness extract a convergence subsequence of $P_R$ to $\bar P_c$ satisfying (\ref{ueq1}) and 
$\bar P_c(1,1)=1$. Moreover, by the inequalities above we have $P_c^-<\bar P_c < P_c^+$ in $Q$.

Since $\bar P_c > P_c^-$, we obtain from Corollary \ref{MP_lem} that $\bar a \ge 0$. We claim that $\bar a$ cannot be $0$. 

Otherwise, let $u_c^-:=\bar P_c \circ A_c^-$ denote the affine deformation of $\bar P_c$ in the angle $Q_c^-$, and we have
\begin{equation}\label{401}
u_c^- = q+ o(1) v_0 \quad \text{at infinity on } Q_c^-.
\end{equation}
Thus, for each $\ep>0$, there is $R=R(\ep)$ large such that
 \begin{equation}
 \label{ucqv}
 u_c^-(x)\leq q(x) + \ep v_0 (x)\quad\text{for all } |x|\geq R.
 \end{equation}
 Since $\det D^2 u_c^-=1$ in $Q_c^-$, we have
 $$\Delta (q+\ep v_0)=2\leq \Delta u_c^-\quad \text{in } Q_c^-\cap B_{R} $$
 while  from (\ref{ucqv})
 $$u_c^-\leq q + \ep v_0\quad \text{on } \p (Q_c^-\cap B_{R}).$$
By the comparison principle, we have
 $u_c^-\leq q+\ep v_0 \quad \text{in } Q_c^-\cap B_{R},$
  hence, together with (\ref{ucqv}), we obtain
  $$ u_c^-\leq q+\ep v_0 \quad \text{in } Q_c^-.$$
By letting $\eps \to 0 $, we obtain
$u_c^-\leq q$ in $Q_c^-$.
Transforming back this inequality to the first quadrant $Q$, we find that $$\bar P_c \le P_c^-\quad\text{in } Q,$$
and we reached a contradiction.

Next we discuss the construction of $\underbar P_c$. For this we solve \eqref{PR} for each $R>2$ with $t_R \in (-1, 0) $ to obtain the solution $P_R$ so that $P_R(1,1)=0$. The existence of such a $t_R$ follows by continuity as above. In fact, when $t_R=-1$, we have the solution $P_R$ of (\ref{PR}) with $P_R(0)=0$ and $P_R(\frac{R}{\sqrt{2}}, \frac{R}{\sqrt{2}})=-\frac{\sqrt{1-c}}{2}R^2<0$, hence $P_R(1, 1)<0$ by convexity.

By symmetry we have $P_R \ge 0$ in $Q \cap B_R \cap \{ x_1+x_2 \ge 2\}$ which implies $P_R >-C$ in $Q$ for some $C$ universal.  From $t_R<0$, we have $P_R\leq P_c^-$ on  $\p (Q \cap B_R)$. Thus, by the comparison principle for the Monge-Amp\`ere equation, we have
 $$P_R\leq P_c^-\quad \text{in} ~  Q \cap B_R.$$
As above, we obtain by compactness the existence of $\underbar P_c$  satisfying (\ref{ueq1}) and 
$\underbar P_c(1,1)=0$. Also $\underbar P_c< P_c^-$ in $Q$ which gives $\underbar a \le 0$ in view of Corollary \ref{MP_lem}. We claim that $\underbar a$ cannot be $0$. 

Assume by contradiction that $\underbar a =0$. 
Denote as above $u_c^-:=\underbar P_c \circ A_c^-$ and \eqref{401} remains valid. 
Since $2\beta^-_c-2\in (0, \beta_c^-)$, by Lemma \ref{v1_lem}, there is a homogenous of degree $2\beta^-_c-2$ function $v_1\ge 0$  on $Q_c^-$ which vanishes on $\p Q_c^-$ and 
\begin{equation}
\label{v1b}
 \triangle v_1(x) \le -  |x|^{2\beta_c^--4}\quad \text{on } Q_c^-.
\end{equation}
Define
$$q_\eps^1:=q- \eps  p \cdot x, \quad \text{and } q_\eps^2:= q - \eps ( 1+ v_0 + v_1).$$
The linear function $p \cdot x$ is chosen such that $p \in Q_c^-$ and $q_\eps^1<q_\eps^2$ in $ (B_1 \setminus B_{1/2}) \cap Q_c^-$.
We show that
$$q_\eps:= \max\{ \chi_{B_1} \, q_\eps^1,\quad  q_\eps^2 \},$$
is a lower barrier for $u_c^-$ in $Q_c^- \cap B_R$ for some large $R$.  Clearly $q_\eps=q_\eps^1$ in a neighborhood of $0$ and $q_\eps=q_\eps^2$ outside $B_{1/2}$ hence $q_\eps \le u_c^-$ on $\p (Q_c^- \cap B_R)$ for $R$ large. For the interior inequalities in $Q_c^-\cap B_R$, we have $$\det D^2 q^1_\eps =1= \det D^2 u_c^-,$$ and outside a neighborhood of the origin, we have 
$$\det D^2 q^2_\eps=1 - \eps \triangle( v_1 + v_0) + O\left(\eps^2 |x|^{2(\beta_c^--2)}\right) >1,$$
for $\eps$ sufficiently small.   Here we used (\ref{v1b}).

In conclusion $q_\eps \le u_c^-$ in $Q_c^-$ and by letting $\eps \to 0 $, we obtain
$q \le u_c^-$, which gives $P_c^-\le \underbar P_c$
and we reached a contradiction.

Now, we establish the asymptotic behaviors of  $\underbar P_c$ and $\bar P_c$ at the origin and  infinity.  

Since $\bar P_c\geq 0$ is not quadratic, by Lemma \ref{Pcpm2}, we have $\lim_{x \to 0}\bar P_{c,12}(x)=\sqrt{1-c}$. Then, from Corollary \ref{cor_pm} and Corollary \ref{Pc2a}, we obtain the following asymptotic expasion
$$\bar P_c(x)= P_c^+ (x) + O(|x|^{2+\alpha}) \quad\text{near the origin}$$ for some $\alpha=\alpha(c)\in (0, 1)$. Hence,
$\bar P_c$ is pointwise $C^{2,\alpha}$ at the origin.

On the other hand, we note that  $\underbar P_c$ has a conical singularity at the origin, that is $\|D^2 \underbar P_c(x)\|\rightarrow 0$ as $x\rightarrow 0$. Indeed, suppose otherwise 
then the tangent plane of $\underbar P_c$ at the origin 
coincides with the tangent plane of $\frac{|x|^2}{2}$, hence $\underbar P_c \ge 0$ in $Q$. This is a contradiction because from $\underbar P_c(0)=\underbar P_c(1,1)=0$, we have
from the strict convexity of $\underbar P_c$ that $\underbar P_c(\frac{1}{2},\frac{1}{2})<0$.

Finally, since $\underbar P_c<\bar P_c<P_c^+$, by (\ref{w_decay}) of Lemma \ref{D2w1}, we have the asymptotic expansions for $\underbar P_c$ and $\bar P_c$
at infinity as stated in the lemma.
\end{proof}

We are now ready to state the main classification result of this section from which Theorem \ref{T2} easily follows.

\begin{propo}\label{Cl} 
Assume that $u$ satisfies (\ref{ueq1}). Then either $u= P_c^\pm$ or
$$u(x)= \lambda^{2}\, \, \bar P_c \left ( \frac x \lambda\right), \quad \mbox{or} \quad u(x)= \lambda^{2}\, \, \underbar P_c \left ( \frac x \lambda\right), $$ for some $\lambda \in (0, \infty)$. Here, $\bar P_c$, $\underbar P_c$ are two solutions to \eqref{ueq1} constructed in Lemma \ref{barPc}.
\end{propo}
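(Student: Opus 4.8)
The plan is to use the boundary Harnack expansion \eqref{HP} together with the maximum principle (Corollary \ref{MP_lem}) to set up a bijection between non-quadratic solutions of \eqref{ueq1} and the value of the asymptotic constant $a$, and then to exhibit a continuous one-parameter family realizing every real value of $a$.

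\textbf{Step 1: The constant $a$ determines the solution.} First I would observe that by Corollary \ref{MP_lem}, if $u, \tilde u \ne P_c^+$ both satisfy \eqref{ueq1} and have the same constant $a = \tilde a$ in \eqref{HP}, then neither $u < \tilde u$ nor $\tilde u < u$ can hold; since $u - \tilde u$ solves a linear elliptic equation with $u = \tilde u$ on $\partial Q$, the strong maximum principle forces $u \equiv \tilde u$. Thus the map $u \mapsto a(u)$ is injective on the set of non-quadratic solutions. Note also that $P_c^-$ is the unique solution with $a = 0$: indeed $P_c^- \circ A_c^- = q$, so its constant is $0$, and any other solution with $a=0$ would coincide with it by injectivity.

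\textbf{Step 2: Quadratic rescalings shift $a$ multiplicatively.} If $u$ satisfies \eqref{ueq1}, so does $u_\lambda(x) := \lambda^2 u(x/\lambda)$ for every $\lambda \in (0,\infty)$, since both the equation and the boundary data $|x|^2/2$ are invariant under this scaling. I would track how the constant transforms: writing $w = u \circ A_c^- - q$ and using that $v_0$ is homogeneous of degree $\beta_c^- \in (1,2)$, one gets $w_\lambda = \lambda^2 w(\cdot/\lambda)$, hence its asymptotic constant is $a(u_\lambda) = \lambda^{2 - \beta_c^-} a(u)$. Since $2 - \beta_c^- > 0$, as $\lambda$ ranges over $(0,\infty)$ the quantity $\lambda^{2-\beta_c^-}$ ranges over all of $(0,\infty)$. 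Applying this to $\bar P_c$ (which has $\bar a > 0$ by Lemma \ref{barPc}) yields solutions with $a$ taking every value in $(0,\infty)$; applying it to $\underbar P_c$ (with $\underbar a < 0$) yields solutions with $a$ taking every value in $(-\infty, 0)$. Together with $P_c^-$ (with $a=0$) and the injectivity from Step 1, this shows: the solutions $\{\lambda^2 \bar P_c(\cdot/\lambda)\}_{\lambda>0}$, $\{\lambda^2 \underbar P_c(\cdot/\lambda)\}_{\lambda>0}$, $P_c^-$, and $P_c^+$ are pairwise distinct and their constants $a$ (with $a = +\infty$ conventionally for $P_c^+$) cover $\mathbb R \cup \{+\infty\}$.

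\textbf{Step 3: Every solution is accounted for.} Now let $u$ be an arbitrary solution to \eqref{ueq1}. By Lemma \ref{uxx_lem} we already have $c \le 1$, and we are in the case $c < 1$. If $u = P_c^+$ we are done; otherwise $u \ne P_c^+$ and Lemma \ref{l41} gives $u$ a well-defined constant $a \in \mathbb R$. By Step 2 there is a solution $\tilde u$ in our explicit family (namely $\lambda^2 \bar P_c(\cdot/\lambda)$ if $a > 0$, $P_c^-$ if $a = 0$, or $\lambda^2 \underbar P_c(\cdot/\lambda)$ if $a < 0$, with $\lambda$ chosen so that $\lambda^{2-\beta_c^-}$ matches the appropriate ratio) with $a(\tilde u) = a(u)$. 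By the injectivity of Step 1, $u = \tilde u$, which is exactly one of the listed forms (noting $P_c^-$ corresponds to the limiting case $\lambda \to \infty$ of, e.g., the $\bar P_c$ family, or can simply be absorbed as the $\lambda$ such that the scaled solution has $a = 0$ — but more cleanly, $P_c^-$ itself should be listed; here I follow the statement and treat $P_c^\pm$ separately). This completes the classification.

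\textbf{Main obstacle.} The delicate point is Step 2: verifying that the asymptotic constant scales exactly as $\lambda^{2-\beta_c^-}$ and, more importantly, that the expansion \eqref{HP} is stable under taking quadratic rescalings — i.e. that $u_\lambda \ne P_c^+$ whenever $u \ne P_c^+$ (immediate, since the rescaling is invertible) and that the $o(1)$ error in \eqref{HP} behaves correctly under rescaling so that the leading constant is genuinely multiplied by $\lambda^{2-\beta_c^-}$ rather than only asymptotically so. One must be a little careful that the family $\lambda \mapsto a(u_\lambda)$ is continuous and strictly monotone, which follows cleanly from the homogeneity of $v_0$; the rest is bookkeeping with the maximum principle. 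The other subtlety, already handled by Lemma \ref{barPc} and Corollary \ref{MP_lem}, is that we need $\bar a$ and $\underbar a$ to be strictly nonzero so that the two rescaling families sweep out the full punctured line.
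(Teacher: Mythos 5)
Your overall route is the same as the paper's: attach to each non-quadratic solution the constant $a$ from the boundary Harnack expansion \eqref{HP}, show that quadratic rescalings multiply $a$ by $\lambda^{2-\beta_c^-}$, and use the families generated by $\bar P_c$, $\underbar{P}_c$ together with $P_c^-$ to realize every $a\in\R$. The gap is in Step~1. You claim injectivity of $u\mapsto a(u)$ by saying: ``since $u-\tilde u$ solves a linear elliptic equation with $u=\tilde u$ on $\p Q$, the strong maximum principle forces $u\equiv\tilde u$.'' But $Q$ is unbounded and, when $a=\tilde a$, you only know $u-\tilde u = o(|x|^{\beta_c^-})$ at infinity (after the affine change of variables), which need not be bounded, so you cannot invoke the strong maximum principle on $Q$ without a Phragm\'en--Lindel\"of-type argument that you have not supplied. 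Note also that the first sentence of Step~1 is not a valid use of Corollary~\ref{MP_lem}: that corollary is the one-way implication ``$a<\tilde a\Rightarrow u<\tilde u$,'' and it says nothing in the case $a=\tilde a$; in particular it does not rule out $u<\tilde u$ with equal constants.

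The paper avoids this issue entirely. Rather than proving injectivity directly, it uses that the rescaling families together with $P_c^-$ form an increasing \emph{continuous} family $\{u_t\}_{t\in\R}$ (continuity in the locally uniform topology, including the limit $\lambda^2\bar P_c(\cdot/\lambda)\to P_c^-$ as $\lambda\to 0^+$). Given an arbitrary solution $u$ with constant $a$, Corollary~\ref{MP_lem} applied to comparisons with $u_{a-\eps}$ and $u_{a+\eps}$ gives $u_{a-\eps}<u<u_{a+\eps}$, and letting $\eps\to 0^+$ identifies $u=u_a$. This sandwich argument needs only the maximum principle on the bounded truncations $A_c^-(Q_c^-\cap B_r)$ already built into Corollary~\ref{MP_lem}, never directly on the unbounded domain $Q$. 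Replacing your Step~1 with this squeezing step, and keeping your Steps~2 and~3 (including the verification that $\lambda\mapsto u_\lambda$ is continuous), produces a complete proof. One small side remark: at the end of Step~3 you say $P_c^-$ is ``the limiting case $\lambda\to\infty$ of the $\bar P_c$ family,'' but since $a(\lambda^2\bar P_c(\cdot/\lambda))=\lambda^{2-\beta_c^-}\bar a\to\infty$ as $\lambda\to\infty$, it is the $\lambda\to 0^+$ limit that gives $P_c^-$.
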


\begin{proof}
Assume $u \ne P_c^+$, and let $a$ denote the constant of a solution $u$  
in the expansion \eqref{HP}. Then a quadratic rescaling of factor $\lambda$ 
of $u$ (that is, one of the form $\lambda^2 u(\frac{x}{\lambda})$) has constant $a \lambda^{2-\beta_c^-}$. 
By Lemma \ref{barPc}, $P_c^-$ and the two families of rescalings above 
generate an increasing continuous family of solutions indexed by constants $a$ in the expansion \eqref{HP}, with $a$ ranging over all $\mathbb R$. 
Now the classification result follows by the maximum principle in Corollary \ref{MP_lem}.

 \end{proof}
\begin{proof}[Proof of Theorem \ref{T2}] Combing Lemma \ref{uxx_lem}, Lemma \ref{barPc} and Proposition \ref{Cl}, we obtain the conclusions of Theorem \ref{T2}.
\end{proof}
 
 For completeness, we indicate a construction of $v_1$ alluded to in the proof of Lemma \ref{l41}.

\begin{lemma} 
\label{v1_lem}
Let $\beta\in [0,\beta_c^-)$.
There exists an 
explicit homogenous of degree $\beta$ function $v_1 \ge 0$ on $Q_c^-$ which vanishes on the boundary of $Q_c^-$, such that
 $\triangle  v_1(x) \le -|x|^{\beta-2 }\quad \text{on } Q_c^-.$ 
\end{lemma}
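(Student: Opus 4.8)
The plan is to construct $v_1$ explicitly in polar coordinates $(r,\theta)$ as $v_1(r,\theta) = r^\beta \psi(\theta)$, where $Q_c^-$ corresponds in polar form to $\{r>0,\ 0<\theta<\alpha_c^-\}$ with $\alpha_c^- \in (\pi/2,\pi)$ the opening angle. The Laplacian of such a function is $\triangle v_1 = r^{\beta-2}\big(\psi''(\theta) + \beta^2 \psi(\theta)\big)$, so the requirement $\triangle v_1 \le -|x|^{\beta-2} = -r^{\beta-2}$ reduces to the one-dimensional ODE inequality
\begin{equation*}
\psi''(\theta) + \beta^2 \psi(\theta) \le -1 \quad \text{on } (0,\alpha_c^-),
\end{equation*}
together with $\psi \ge 0$ on $[0,\alpha_c^-]$ and $\psi(0)=\psi(\alpha_c^-)=0$. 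The key point making this solvable is the strict inequality $\beta < \beta_c^- = \pi/\alpha_c^-$, i.e. $\beta \alpha_c^- < \pi$, so that $\beta\theta$ stays strictly below $\pi$ on the interval.

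First I would take the natural candidate $\psi(\theta) = \dfrac{1}{\beta^2}\big(\cos(\beta\theta - \beta\alpha_c^-/2)\,/\cos(\beta\alpha_c^-/2) - 1\big)$, which is the solution of $\psi'' + \beta^2\psi = -1$ with $\psi$ symmetric about $\theta = \alpha_c^-/2$. One then checks: on $(0,\alpha_c^-)$ the argument $\beta\theta - \beta\alpha_c^-/2$ ranges over $(-\beta\alpha_c^-/2,\ \beta\alpha_c^-/2) \subset (-\pi/2,\pi/2)$ (using $\beta\alpha_c^- < \pi$), hence $\cos(\beta\theta-\beta\alpha_c^-/2) > \cos(\beta\alpha_c^-/2) \ge 0$ for $\theta$ in the open interval, giving $\psi > 0$ there, while $\psi(0)=\psi(\alpha_c^-)=0$ by construction. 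For the degenerate case $\beta=0$ one simply uses $\psi(\theta) = \tfrac12\theta(\alpha_c^- - \theta)$, which satisfies $\psi''=-1$ and vanishes at the endpoints. Setting $v_1(x) = r^\beta\psi(\theta) = |x|^\beta \psi(\arg x)$ then gives a function that is homogeneous of degree $\beta$, nonnegative on $Q_c^-$, zero on $\partial Q_c^-$, and satisfies $\triangle v_1 = -|x|^{\beta-2} \le -|x|^{\beta-2}$ (in fact with equality, which is even better than required).

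There is essentially no hard obstacle here; the only thing to be careful about is the sign and positivity, which is exactly where the hypothesis $\beta < \beta_c^-$ is used in an essential way (if $\beta \alpha_c^-$ were $\ge \pi$ the cosine-based $\psi$ would change sign or blow up). I would also remark, for the applications in Lemma \ref{l41} and Lemma \ref{barPc}, that one may replace the equality $\triangle v_1 = -|x|^{\beta-2}$ by a strict inequality with a slightly smaller coefficient if needed, or rescale $v_1$ by any positive constant, since these do not affect homogeneity, nonnegativity, or the boundary condition; but as stated the construction already delivers precisely the inequality claimed.
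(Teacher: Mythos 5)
Your proof is correct and uses the same separation-of-variables strategy as the paper, reducing to an angular ODE inequality for a profile on $[0,\alpha_c^-]$. The only difference is the choice of explicit angular function: you solve $\psi'' + \beta^2\psi = -1$ exactly via a cosine centered at $\alpha_c^-/2$, whereas the paper perturbs the homogeneous solution, taking $\varphi(\theta)=\sin(\beta_c^-\theta) + \delta\,\theta(\alpha_c^- - \theta)$ for small $\delta>0$ and then rescaling by a large constant; both constructions use the hypothesis $\beta<\beta_c^-$ in the same essential way to guarantee positivity.
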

\begin{proof}
The opening angle of $Q_c^-$ is $\alpha_c^-= \frac{\pi}{\beta_c^-}$. We look for $v$ of the following form in polar coordinates
$$v(r, \theta)= r^{\beta}\varphi (\theta), 0\leq \theta\leq \alpha_c^-$$
where $\varphi(0)=\varphi(\alpha_c^-)=0$ and $\varphi(\theta)\geq 0$ for $0\leq \theta\leq \alpha_c^-$. 

Compute
$$\Delta v= r^{\beta-2}[\beta^2 \varphi(\theta) + \varphi^{''} (\theta)].$$
The problem reduces to finding $\varphi$ such that
$\beta^2 \varphi(\theta) + \varphi^{''}(\theta) < 0$ on $[0,\alpha_c^-],$ and then choosing $v_1=A v$ for some large constant $A$.

We can choose $\varphi$ of the form
$$\varphi(\theta) = \sin (\beta_c^- \theta)+ \delta \, \, \theta (\alpha_c^- -\theta)$$
with $\delta$ small. Indeed, for $\delta>0$ small, we have on $[0,\alpha_c^-]$
$$\beta^2 \varphi(\theta) + \varphi^{''}(\theta) = - ((\beta_c^-)^2-\beta^2) \sin  (\beta_c^- \theta)+\delta  \, \left [ \beta^2 \theta(\alpha_c^- -\theta)-2\right ] \, < 0.
$$

\end{proof}

\section{Proof of the global $C^{2,\alpha}$ estimates}
\label{T1_pf}
In this section, we prove Theorem \ref{T1} and its extension by using the results established in Proposition \ref{C2P+} and Proposition \ref{Cl}. 
\begin{proof}[Proof of Theorem \ref{T1}]
Let 
$u, \underbar u, f,\Om,\varphi,\beta$ be as in the statement of Theorem \ref{T1}. We proceed by showing first that $u$ is pointwise $C^{2,\alpha}$ at each vertex of $\Omega$, and then it is  
$C^{2,\alpha}$ in a neighborhood of each vertex,
and finally, u is globally  $C^{2,\alpha}$ in $\ov\Om$.
\vglue 2mm
\noindent
{\it Step 1: $u$ is pointwise $C^{2,\alpha}$ at each vertex.}
Consider a vertex of $\Omega$, which we can assume to be the origin $0$. 

We show that $u$ is pointwise $C^{2,\alpha}$ at $0$. 
After subtracting a linear function and after performing an affine transformation, we can assume: 

(1) the local geometry of $\Omega$ at $0$ is that of the first quadrant, 
$$
\Omega\cap B_\rho= Q\cap B_\rho \quad \text{for some } \rho \in (0,1).
$$

(2) $$\underbar u(0)=0, \quad  \nabla \underbar u(0)=0, \quad  \underbar u_{11}(0)=\underbar u_{22}(0)=1.$$

This implies that $u \ge \underbar u \ge 0$ and
$$\det D^2 u=f   \quad \mbox{in} \quad Q \cap B_\rho, \quad \quad u=\varphi \quad \mbox{on} \quad \p Q \cap B_\rho  $$
with 
$$|f(x)-f(0)|\le C|x|^{\beta} \quad \mbox{in} \quad Q \cap B_\rho, \quad |\varphi(x)-q(x)| \le C|x|^{2+\beta} \quad \mbox{on} \quad \p Q \cap B_\rho,$$
for some $C>0$ depending on $\|f\|_{C^\beta(\ov\Om)}$ and $\|\varphi\|_{C^{2,\beta}(\p\Om)}$. 

Define
$$c:=f(0),$$
and using that $\underbar u$ is a strict subsolution we have $c <1$ since
$$c=f(0) < \underbar f(0):=\det D^2 \underbar u(0) \le 1.$$

We claim that there exists $r$ small depending on the data above and the $C^2$ norm of $\underbar u$ such that the rescalings
$$u_r(x):= \frac {1}{r^2} \, u(rx), \quad f_r(x):=f(rx), \quad \varphi_r(x):= \frac {1}{r^2}   \, \varphi(rx),$$
satisfy the hypotheses of Corollary \ref{Pc2a}.
We can always choose $\alpha \le \beta$ if necessary in Corollary \ref{Pc2a}, so the only part that needs to be checked is 
\begin{equation}\label{501}
 |u_r(x)-P_c^+(x)| \le \ep_0 |x|^2 \quad \mbox{ in} \quad Q \cap B_1.
\end{equation}
 This follows by compactness. Indeed,  we have $$u \, \ge \, \underbar u = \frac 12 x^T D^2 \underbar u(0) \, x + o(|x|^2),$$
and any blow-up limit $\bar u$ of a sequence of $u_r$'s must be one of the global solutions characterized in Proposition \ref{Cl}. 
Since $\bar u$ is above the quadratic tangent polynomial of $\underbar u$ at the origin, which in turn separates quadratically above $P_c^-$ we find $\bar u=P_c^+$, which proves our claim.
\vglue 2mm
\noindent
{\it Step 2: $u$ is $C^{2,\alpha}$ in a neighborhood of each vertex.} Now it is standard to extend the pointwise $C^{2,\alpha}$ estimate from one vertex to $C^{2,\alpha}$ estimates in a neighborhood of that vertex. For this we use the $C^{2,\alpha}$ estimates at the boundary for the Monge-Amp\`ere equation (see \cite[Theorem 1.1]{S1}). 

Assume that we are in the setting of {\it Step 1}. 
Notice that as Section \ref{Compactness} we have that $u$ separates quadratically from its tangent plane at the boundary points on $ \p Q$ in annular domains 
$Q \cap (B_{4r} \setminus B_r)$ for all $r>0$ small. We can apply the results in \cite[Theorem 1.1]{S1} and conclude that
$$\|u_r-P_c^+\|_{C^{2,\alpha}} \le C r^\alpha \quad \mbox{in} \quad Q \cap (B_{3r} \setminus B_{2r}),$$
for all $r$ small. This implies that $u$ is $C^{2,\alpha}$ in a neighborhood of the origin.
\vglue 2mm
\noindent
{\it Step 3: Conclusion.} Having proved that $u$ is $C^{2,\alpha}$ in a neighborhood of each vertex of $\Omega$, we can combine these with 
$C^{2,\alpha}$ estimates  for the Monge-Amp\`ere equation at the boundary (see \cite[Theorem 1.1]{S1}) and  in the interior (see \cite{C}) to conclude that $u\in C^{2,\alpha}(\overline{\Omega})$.
\end{proof}

Next we give a version of Theorem \ref{T1} in which the hypothesis that $\underbar u$ is a strict subsolution is removed and we 
list all possible scenarios for the regularity of $u$ at the origin. For simplicity we assume that 
$$\Omega:=Q \cap B_1.$$

\begin{thm}\label{T3}
Assume that $u$ is a convex function that satisfies 
$$
\left\{
 \begin{alignedat}{2}
   \det D^2 u ~& = f~&&\text{in} ~  \Omega, \\\
u &= \varphi~&&\text{on}~ \p Q
 \end{alignedat} 
  \right.
  $$
 where for some $\beta\in (0, 1)$, $$f \in C^\beta(\overline{\Om}), \quad f>0, \quad{ and} \quad \varphi \in C^{2,\beta} (\p Q\cap B_1).$$ 
\begin{myindentpar}{1cm}
(i) If $f(0)< \varphi_{11}(0) \varphi_{22}(0)$ then either $u$ is $C^{2,\alpha}$ in a neighborhood of the origin for some $\alpha>0$ or $u$ has a conical singularity at $0$. \\
(ii) If $f(0) =  \varphi_{11}(0) \varphi_{22}(0)$ then either $u$ is $C^{2}$ in a neighborhood of the origin or $u$ has a conical singularity at $0$.\\
(iii) If $f(0) > \varphi_{11}(0) \varphi_{22}(0)$ then $u$ has a conical singularity at $0$.
\end{myindentpar}
\end{thm}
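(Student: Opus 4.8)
\textbf{Proof proposal for Theorem \ref{T3}.}

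The plan is to reduce all three cases to the classification of global solutions in Proposition \ref{Cl} via a blow-up argument at the origin, and then to read off the possible behaviors from the list of global profiles. As a first step, I would normalize: subtract the affine part of $\varphi$ at $0$ and apply an affine transformation (using the affine invariance of the Monge-Amp\`ere equation) so that $\Omega$ locally looks like $Q$, $\varphi(0)=0$, $\nabla\varphi(0)=0$, and $\varphi_{11}(0)=\varphi_{22}(0)=\sqrt{\varphi_{11}(0)\varphi_{22}(0)}$; after a further dilation we may take this common value to be $1$. With $c:=f(0)$, the three cases become $c<1$, $c=1$, $c>1$ respectively, and the H\"older hypotheses give $|f(x)-c|\le C|x|^\beta$ and $|\varphi(x)-q(x)|\le C|x|^{2+\beta}$ near $0$. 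Since $u$ is trapped between its tangent plane at $0$ and the convex envelope generated by $\varphi$, it is pointwise $C^{1,1}$ at $0$ with $D^2u(0)$ (in the sense of the supporting/upper barriers) squeezed accordingly; hence the quadratic rescalings $u_r(x)=r^{-2}u(rx)$ are locally uniformly bounded and, by the compactness of Lemma \ref{comp} together with the second derivative estimates \eqref{uC2}--\eqref{uC3}, any subsequential blow-up limit $\bar u$ solves $\det D^2\bar u=c$ in $Q$ with $\bar u=q$ on $\p Q$.

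For \emph{case (iii)}, $c>1$: by Lemma \ref{uxx_lem} there is \emph{no} global solution to \eqref{ueq}--\eqref{ubdr} with $c>1$, and more precisely the pointwise bound $u_{11},u_{22}\le 1$ cannot persist under the equation. The cleanest route is to show directly that $D^2u$ cannot stay bounded near $0$: if it did, the blow-up $\bar u$ would be a global solution of $\det D^2\bar u=c>1$ on $Q$ with quadratic boundary data, contradicting Lemma \ref{uxx_lem}; hence $\|D^2u(x)\|\to\infty$ as $x\to 0$ along some sequence, and convexity plus the boundary data forces the conical singularity (the tangent plane at $0$ cannot be the tangent plane of the boundary quadratic, else $u\ge 0$ and a blow-up limit would exist). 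For \emph{case (i)}, $c<1$: every blow-up limit is one of $P_c^\pm$, or a rescaling of $\bar P_c$, or a rescaling of $\underbar P_c$ (Proposition \ref{Cl}). If along \emph{every} sequence $r\to 0$ the limit is $P_c^+$, then after an initial dilation $|u_r-P_c^+|\le \eps_0|x|^2$ on $Q\cap B_1$ and Corollary \ref{Pc2a} applies, giving the pointwise $C^{2,\alpha}$ expansion $u=P_c^+(x)+O(|x|^{2+\alpha})$; then Step 2 of the proof of Theorem \ref{T1} (invoking \cite[Theorem 1.1]{S1} in annuli $Q\cap(B_{3r}\setminus B_{2r})$) upgrades this to $C^{2,\alpha}$ in a neighborhood of $0$. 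If some blow-up limit is $P_c^-$ or a rescaling of $\bar P_c$ or $\underbar P_c$, I claim $D^2u$ must oscillate without limit at $0$ — in particular it is not continuous at $0$, and since it cannot be bounded with a well-defined value it degenerates to $\|D^2u(x)\|\to 0$ along a sequence, i.e. a conical singularity (one has to rule out a bounded-but-discontinuous Hessian; here the key is that a pointwise bounded $D^2u(0)$ forces, via the two barriers that agree to second order, $u=q+O(|x|^2)$ hence $\bar u=P_c^+$ uniquely, a contradiction). \emph{Case (ii)}, $c=1$: by Lemma \ref{uxx_lem}(i) the only global solution is $q$ itself, so if a blow-up limit exists it must be $q$; then $u_r\to q$ in $C^2_{loc}(\ov Q\setminus\{0\})$, and combining with the boundary $C^{2}$-type control (again \cite[Theorem 1.1]{S1}, now with $\alpha$-dependence replaced by a modulus since $c=1$ puts us at the borderline $\beta_c^+=\infty$ formally) gives $u\in C^2$ near $0$; the alternative is that no blow-up limit exists, which as above means $\|D^2u\|\to\infty$, forcing the conical singularity.

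The main obstacle I anticipate is the dichotomy step — showing that failure of the ``good'' blow-up ($P_c^+$ in (i), $q$ in (ii)) genuinely forces a \emph{conical} singularity, i.e. $\|D^2u(x)\|\to 0$, rather than merely a bounded-but-oscillating Hessian. The resolution should be: if $D^2u$ is bounded near $0$ but does not tend to a limit, pass to two different blow-up limits along two sequences; each is a global solution and, being a blow-up of the \emph{same} $u$ with the \emph{same} pointwise second-order data at $0$, must share the quadratic part $P^+_c$ (using that the upper and lower barriers generated by $\varphi$ and by the supporting plane of $u$ agree to second order precisely when $D^2u(0)=I$, which is the only way a bounded Hessian is compatible with the quadratic boundary data at a corner of $Q$) — forcing uniqueness and hence the good alternative. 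Thus ``not the good alternative'' $\Rightarrow$ ``$D^2u$ unbounded,'' and unboundedness plus convexity and $u=\varphi$ on $\p Q$ is exactly a conical singularity at $0$ (the tangent plane of $u$ at $0$ must then be strictly below $q$, so $\|D^2u(x)\|\to 0$). A secondary technical point is formulating ``conical singularity'' uniformly with the rest of the paper ($\|D^2u(x)\|\to 0$ as $x\to 0$, as in Lemma \ref{barPc}); I would state it that way and note it matches the singular profiles $\underbar P_c$ (and the absence of any bounded-Hessian profile when $c\ge 1$).
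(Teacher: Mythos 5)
Your overall plan — blow up and compare against the classification in Proposition~\ref{Cl} — is the right one, but there is a genuine error in the dichotomy step for case~(i), and it is not a minor technicality.

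You claim that if some blow-up limit is $P_c^-$, or a rescaling of $\bar P_c$ or $\underbar{P}_c$, then $D^2u$ must oscillate without limit at $0$ and hence $u$ has a conical singularity. This is false for $P_c^-$ and for rescalings of $\bar P_c$. If a blow-up limit is a rescaling of $\bar P_c$, then since $\bar P_c$ is itself pointwise $C^{2,\alpha}$ at the origin with quadratic part $P_c^+$, after one further dilation you land back in the situation $|u_r - P_c^+| \le \eps_0 |x|^2$, and Corollary~\ref{Pc2a} gives $u \in C^{2,\alpha}(0)$ with tangent $P_c^+$. And if \emph{every} blow-up limit is $P_c^-$, you are precisely in the hypotheses of Lemma~\ref{Pc-cor}, which yields $u \in C^{2,\alpha}(0)$ with quadratic tangent $P_c^-$ (not $P_c^+$!). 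So the conclusion "$C^{2,\alpha}$ near $0$" covers these cases too; your proposed dichotomy would miss a valid regular scenario and would incorrectly declare a singularity. Relatedly, your supporting claim that a bounded Hessian at the corner forces $D^2u(0)=I$ is wrong: $P_c^-$ has bounded Hessian $\neq I$ and is perfectly compatible with the boundary data $q$ on $\p Q$, since only the diagonal entries $u_{11}, u_{22}$ are pinned down by the two edges, not the mixed entry $u_{12}$.

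The clean way to structure the argument, which is what the paper does, is to dichotomize \emph{first} on whether $u$ has a conical singularity at $0$ or not, rather than on which blow-up limit appears. If there is no conical singularity, then the tangent plane of $u$ at $0$ must coincide with that of $\varphi$, which forces $u \ge 0$ near $0$. Then every blow-up limit is a \emph{nonnegative} solution of \eqref{ueq}--\eqref{ubdr}, i.e.\ from the list $P_c^\pm$, $\lambda^2 \bar P_c(\cdot/\lambda)$ (Theorem~\ref{T2}); the singular family $\underbar{P}_c$ is automatically excluded since those are not $\ge 0$. All of these lead to $C^{2,\alpha}$ via Corollary~\ref{Pc2a} or Lemma~\ref{Pc-cor}. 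This also streamlines (ii) and (iii): once $u\ge 0$, the $c=1$ case forces $\bar u = q$ by Lemma~\ref{uxx_lem}(ii), and the $c>1$ case has no admissible blow-up at all, so "not conical" is impossible. Your cases (ii) and (iii) are essentially right in spirit once this reorganization is made; case (i) needs the correction above, including the possibility that the quadratic tangent of $u$ at $0$ is $P_c^-$ rather than $P_c^+$.
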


\begin{proof} Assume that $\varphi(0)=0$, $\nabla \varphi(0)=0$. If $u$ has a conical singularity at $0$ then we are done. Now, suppose that
$u$ does not have a conical singularity at $0$. Then its tangent plane at the origin 
coincides with the tangent plane of $\varphi$, hence $u \ge 0$ in $\Omega$.

The proof of (i) is essentially given in that of Theorem \ref{T1} above. The only difference is that now the blow-up limit $\bar u \ge 0$ 
can also be $P_c^-$ or a quadratic rescaling of $\bar P_c$. 
In the second case, after a rescaling by a large factor we end up again in the situation \eqref{501}. 
On the other hand, if $\bar u=P_c^-$ for any blowup limit of the $u_r$'s, then we are in the setting of Lemma \ref{Pc-cor}.
Now we obtain that $u$ is $C^{2,\alpha}$ at the origin with $P_c^-$ as its quadratic tangent polynomial at the 
origin.

The proof of (ii) corresponds to the case $c=1$ of Theorem \ref{T2}.
 Then the blowup limit $\bar u$ is unique $\bar u=q$ which gives that $u$ is pointwise $C^2$ at the origin. We can extend this estimate in a neighborhood of $0$ as in the proof of  Theorem \ref{T1} above.

The case (iii) corresponds to $c>1$ and it is obvious by Theorem \ref{T2}.
\end{proof}

\begin{rem} The $C^{2,\alpha}$ norm of $u$ cannot be easily quantified in the case (i) of Theorem \ref{T3} above. 
This is because by Proposition \ref{Cl} the quadratic polynomial $P_c^-$ is unstable for the $C^2$ norm: any small postive perturbation on $\p B_1 \cap Q$ 
produces a jump of order 1 for $D^2 u(0)$ while a small negative perturbation produces a conical singularity at the origin, i.e., $\|
D^2u(x)\| \to \infty$ as $x \to 0$. On the other hand, in Theorem \ref{T1} the existence of a global strict subsolution $\underbar u \in C^2$ prevents $D^2u$ being close to $D^2P_c^-$ near the origin.   
\end{rem}

We finally mention that our results in Theorem \ref{T3} are sharp in the sense that $ u \notin C^{2,\alpha}(0)$ in the case (ii). Indeed, if $c=1$ and consider a solution to 
$$\det D^2 u=1 \quad \mbox{in} \quad Q \cap B_1, \quad \quad u=q \quad  \mbox{on} \quad \p Q,$$
with $$u \ge q + \eps x_1 x_2 \quad \quad \mbox{on} \quad \p B_1 \cap Q.$$ Then $u \ge q$ by the maximum principle and, as 
shown above $q$ is the 
tangent quadratic polynomial of $u$ at the origin. We claim that
\begin{equation}
\label{ulog}
u \ge q + (\eps-C \eps^2) x_1 x_2 \quad \quad \mbox{on} \quad \p B_{1/2} \cap Q,
\end{equation}
which after iteration implies that $$u \ge q + \min \left\{ \eps/2, c' |\log |x||^{-1} \right \}  \, \, x_1x_2,$$ for some small $c'>0$. This shows that 
$ u \notin C^{2,\alpha}(0)$ for any $\alpha>0$. 

The claim (\ref{ulog}) follows from the maximum principle by checking that
$$q+ \eps x_1 x_2+ \eps^2 v$$
is a lower barrier for $u$, where $v$ is a $C^2$ function that satisfies
$$\triangle v \ge  2, \quad \|D^2v\| \le C \quad \mbox{in} \quad Q \cap B_1,$$
and
$$  v \le 0 \quad \mbox{on} \quad \p (Q \cap B_1), \quad \quad v=0 \quad \mbox{on} \quad \p Q \cap (B_{3/4} \setminus B_{1/4}).$$
\vglue 0.5cm
{\bf Acknowledgements.} The authors would like to thank the referee for helpful comments on the manuscript.

\end{document}